\chardef\atsign='100
\let\cite=\citet
\def\eps{\varepsilon}
\def\xHn#1{{\rm H}^{#1}}
\renewcommand{\bal}[1]{\color{black}#1\color{black}}
\newcommand{\norme}[2][]{\|#2\|_{#1}}
\renewcommand{\Hcurl}[1][\Omega]{\bH_{\mathrm{curl}}{\left(#1\right)}}
\renewcommand{\Hdiv}[1][\Omega]{\bH_{\mathrm{div}}{\left(#1\right)}}
\newcommand{\Hzcurl}[1][\Omega]{\bH_{\mathrm{0,curl}}{\left(#1\right)}}
\newcommand{\Lzmv}[1][2]{\dot{L}^2}
\def\bphi{\boldsymbol{\phi}}
\def\xCzero{{\rm C}^{0}}
\def\xLtwo{{\rm L}^{2}} 
\def\xCone{{\rm C}^{1}} 
\def\xHone{{\rm H}^{1}}
\def\xLinfty{{\rm L}^{\infty}} 
\def\eps{\varepsilon}
\newcommandx*{\Hdiveps}[3][1=\Omega,2=\epsilon,3={}]{\bH_{\mathrm{div}}^{#3}\left(#1,#2
  \right)}
\newcommandx*{\Hzdiveps}[3][1=\Omega,2=\epsilon,3={}]{\bH_{\mathrm{div=0}}^{#3}\left(#1,#2
  \right)}
\newcommandx*{\Hrcurl}[2][1=\Omega,2=r]{\bH_{\mathrm{curl}}^{#2}\left(#1\right)}
\newcommandx*{\Hzrcurl}[2][1=\Omega,2=r]{\bH_{\mathrm{0,curl}}^{#2}\left(#1\right)}
\begin{document}

\title[An IP method for the Maxwell equations in Heterogeneous Media]
{An Interior Penalty Method with $\xCzero$ Finite Elements for the
    Approximation of the Maxwell Equations in Heterogeneous Media:
    Convergence Analysis with Minimal Regularity}

\author[A. Bonito]{Andrea Bonito$^{1}$}

\address{Department of Mathematics, Texas A\&M University
3368 TAMU, College Station, TX 77843, USA.}

\email{bonito\atsign math.tamu.edu}

\author[J.-L. Guermond]{Jean-Luc Guermond$^{2}$}

\address{Department of Mathematics, Texas A\&M University 3368 TAMU,
  College Station, TX 77843, USA. On leave CNRS, France.}

\email{guermond\atsign math.tamu.edu}

\author[F. Luddens]{Francky Luddens$^{2,3}$}

\address{LIMSI, UPR 3251 CNRS, BP 133, 91403 Orsay cedex, France.}

\email{luddens\atsign limsi.fr}

\thanks{$^1$Partially supported by the NSF grant DMS-1254618.}
\thanks{$^2$Partially supported by the NSF grant DMS-1015984 .}

\thanks{$^3$Parts of this work was done during
  visits of Francky Luddens at Texas A\&M. The support of the University
  Paris-Sud 11 is acknowledged.}

\date{\today}

\begin{abstract} The present paper proposes and analyzes {an
    interior penalty technique} using $\xCzero$-finite elements to solve
  the Maxwell equations in domains with heterogeneous properties. The
  convergence analysis for {the boundary value problem and the
    eigenvalue problem} is done assuming only minimal regularity in
  Lipschitz domains. The method is shown to converge for any
  polynomial degrees and to be spectrally correct.
  \end{abstract}


\keywords{Finite elements, Maxwell equations, Eigenvalue,
  Discontinuous Coefficients, Spectral Approximation.}

\subjclass{65N25, 65F15, 35Q60}

\maketitle

\section{Introduction}
The objectives of the present paper is to propose and analyze a nodal
$\xCzero$-finite element technique to solve the Maxwell equations in
domains with heterogeneous properties. More precisely, given a
three-dimensional open domain $\Omega$ with boundary $\front$, we want to
construct an approximation of the following problem using an interior
penalty technique and $\xCzero$-Lagrange finite elements:
\begin{equation}\label{pre_e:strong} 
   \ROT(\kappa\ROT \bE) = \eps\bg,\quad \DIV(\eps \bE) =0, \qquad
   \bE\CROSS\bn_{|\partial\Omega}= 0,
\end{equation}
where the fields $\kappa$ and $\eps$ are only piecewise smooth.  This
task is non-trivial on two counts: first, the solution of
\eqref{pre_e:strong} is singular in general, see \eg
\cite{Bo_Gu_Lu_2012}; second, it is known since the pioneering work of
\cite{MR1112332} that $\bH^1$-conforming approximation techniques that
rely on uniform $\xLtwo$-stability estimates both on the curl and the
divergence of the approximate field do not converge properly if
$\Omega$ is non-smooth and non-convex. This defect is a consequence of
$\bH^1(\Omega)\cap \Hzcurl$ being a closed proper subspace of $\Hdiv
\cap{\Hzcurl}$. This is probably one reason why edge elements have
been favored over $\xCzero$-Lagrange finite elements over the years. It is
only recently, say since the ground-breaking ``rehabilitation'' work
of \cite{CosDau02}, \cite{BraPas04} and \cite{BraKolPas05} that
$\xCzero$-Lagrange finite elements have regained their status as credible
approximation tools for the Maxwell equations and more generally for
div-curl problems.  The key idea developed in the above references is
that the divergence of the discrete field approximating $\eps\bE$ must
be controlled in a space that is intermediate between $\xLtwo(\Omega)$
and $\xHn{-1}(\Omega)$. This program is carried out in \cite{CosDau02}
by controlling the divergence of $\eps\bE$ in a weighted $\xLtwo$-space
where the weight is a distance to the re-entrant corners of the domain
to some appropriate power depending on the strength of the
singularity. The analysis of the method by \cite{CosDau02} requires
the approximation space to contain the gradient of $\xCone$ scalar-valued
functions, which excludes low-order finite-elements spaces. This
restriction {on low-order elements} is removed in
\cite{BufCiaJrJam09} by considering a mixed form of the weighted
$\xLtwo$-stabilization technique {on special meshes}. The method
developed by \cite{BraPas04} and \cite{BraKolPas05} involves a
least-square approximation of a discrete problem with different test
and trial spaces. The trial space is $\bL^2(\Omega)$ and the
components of the test space are subspaces of $\xHone(\Omega)$.  The
numerical method uses piecewise constant functions for the trial space
and piecewise linear functions enriched with face bubbles for the test
space.  A technique based on a local $\xLtwo$-stabilization of the
divergence of $\eps\bE$ and using finite elements of order high enough
so as to contain the gradient of Argyris or Hsieh-Clough-Tocher
$\xCone$-finite elements is introduced in \cite{MR2485453}. The
convergence analysis of the method requires the source term to be
smooth enough so that $\ROT\bE \in \bH^r(\Omega)$ with $r>\frac12$.
This method is further revisited in two space dimensions in
\cite{MR2958336} to allow for low-order finite elements and to remove
the smoothness assumption on $\ROT\bE$.

The present paper is the second part of a research program started in
\cite{BoGu2011} {and is part of the PhD thesis of \cite{Thesis_Francky_2012}.}
The technique adopted in \citep{BoGu2011,GAFD_Giesecke_2010b} consists of
stabilizing the divergence of the field $\eps\bE$ in a negative
Sobolev norm through a mixed formulation. It is shown in
\citep{BoGu2011,GAFD_Giesecke_2010b} that stabilizing the divergence in $\xHn{-1}(\Omega)$
is sufficient to solve the boundary value problem
\eqref{pre_e:strong}, but it may not be sufficient in general to solve
the associated eigenvalue problem if only Lipschitz regularity of the
domain is assumed.  In this case the divergence must be stabilized in
$\xHn{-\alpha}(\Omega)$ with $\alpha\in (\frac{\ell}{2\ell-1},1]$ where
$\ell-1$ is the polynomial degree of the approximation of $\bE$,
$\ell\ge 1$.  Note in passing that the method introduced in
\citep{BoGu2011,GAFD_Giesecke_2010b} with the particular choice $\alpha=1$ has also been
proposed in \cite{MR2914268}. The convergence analysis of the boundary
value problem in \citep{MR2914268} assumes that the right-hand side is divergence
free and either the solution to \eqref{pre_e:strong} is smooth or the
degree of the finite element space is large enough or the mesh is
specifically constructed so as to contain the gradient of $\xCone$
scalar-valued functions. The method proposed in \cite{BoGu2011}
converges for all $\alpha\in (\frac{\ell}{2\ell-1},1]$ as stated in
\citep[Lemma~5.4]{BoGu2011}, and the convergence rate is even maximal
when $\alpha=1$ \emph{without} the extra assumptions used in
\citep{MR2914268}, provided the right-hand side of the boundary value
problem is solenoidal (which is usually the case).  Yet, the possibility
of choosing $\alpha<1$ has been introduced in \citep{BoGu2011} to
ensure the spectral correctness of the approximation for eigenvalue
problems.

The objective of the present paper is to generalize the analysis of
\cite{BoGu2011} to boundary and eigenvalue problems with coefficients
$\kappa$ and $\eps$ in \eqref{pre_e:strong} that are only piecewise
smooth.  Our analysis assumes only the natural regularity of the
solution; in particular the a priori regularity of $\bE$ may be lower
than that of $\bH^{\frac12}(\Omega)$, see Theorem
\ref{Thm:Hs_stability_bvp}. {We focus mainly our attention on the
convergence analysis in the very low regularity range, $\bE\in
\bH^{s}(\Omega)$, $0< s< \frac12$. This range is rarely investigated
in the literature since it entails many technical difficulties. One
purpose of the present paper is to show that these difficulties can be
handled properly when using continuous finite elements; the analysis
with discontinuous elements has already been done, see \eg
\cite{MR2263045,MR2324460}.}

{The approximation that we propose consists of using} a mixed
formulation with nodal finite elements and an interior penalty method
to account for the jumps in the coefficients $\kappa$ and
$\eps$. The convergence analysis presented holds for any polynomial degree (greater than one). 
One essential argument of this paper is the construction
of a smoothing operator in $\Hzcurl$ that commutes (almost) with the
curl operator, see Lemma~\ref{Lem:commute_K}.  In passing we correct a
mistake from \citep{BoGu2011} where the smoothing operator was not
constructed properly. The second important argument is
Lemma~\ref{l:cont_IP} in the Appendix.  This is a variant of Lemma~8.2
in \cite{MR2263045}; however, our proof slightly differs from that in
\cite{MR2263045} since the estimates therein do not seem to be uniform
in the meshsize.

\bal{The method presented in this paper has been implemented in a
  three-dimensional parallel MHD code, SFEMaNS, see \eg
  \cite{GLLNR11}.  The code is developed under an open source licence
  and is used to test various experimental and astrophysical dynamo
  scenarios, \eg
  \cite{GAFD_Giesecke_2010b,GiNoStGeLeHeLuGu_2012,HoNoMaVaLuLe_2013}.
  A substantial part of the work presented in this paper has been
  motivated by the VKS experiment, where the heterogeneous
  distribution of magnetic permeability plays a key role on the onset
  of the dynamo effect, see \cite{Monchaux07}.  SFEMaNs is also used
  to investigate MHD instabilities in liquid metal flows, see \eg
  \cite{He_No_Ca_Gu_2015} for an application to liquid metal
  batteries.}

The paper is organized as follows. We introduce notation and recall a
priori regularity results in \S\ref{Sec:Preliminaries}. The smoothing
operator in $\Hzcurl$ is introduced in \S\ref{s:approx_hcurl}.  The
key properties of this operator are stated in
Theorem~\ref{thm:approx_op} and Lemma~\ref{Lem:commute_K} (the
estimate \eqref{approx_delta_rot} is particularly important).  The
finite element framework and the interior penalty technique are
presented in \S\ref{s:ah}.  This section also contains stability
estimates for the weak formulation of the boundary value problem. The
convergence analysis for the boundary value problem is done in
\S\ref{Sec:Convergence_analysis_BVP}.  The two important results from
this section are Theorem~\ref{thm:cvg_norme_h_min_reg} and
Theorem~\ref{thm:cvg_norme_l2}. Theorem~\ref{thm:cvg_norme_h_min_reg}
establishes convergence in a discrete norm and
Theorem~\ref{thm:cvg_norme_l2} establishes convergence in
$\bL^2(\Omega)$ using a duality argument.  {Additional convergence
  estimates assuming full regularity are given in
  Theorem~\ref{thm:cvg_norme_h_smooth} for completeness.}  The
spectral correctness of the approximation of the eigenvalue problem is
analyzed in \S\ref{s:eigen}.  The paper is complemented with an
appendix containing technical details. Lemma~\ref{l:cont_IP} is one of
the key results from the Appendix.

\section{Preliminaries} \label{Sec:Preliminaries}
\subsection{Spaces}
Let $D$ be an open connected Lipschitz domain in $\Real^3$. (In the
rest of the paper $D$ denotes a generic open Lipschitz domain that may
differ from $\Omega$.)  The space of smooth functions with compact
support in $D$ is denoted $\calD(D)$.  The norm in $\xHone(D)$ is defined
as follows:
\begin{equation}
\|v\|_{\xHone(D)}^2:= \|v\|_{\xLtwo(D)}^2 + \|\GRAD v\|_{\bL^2(D)}^2.
\end{equation}
The space $\xHn{s}(D)$ for $s\in (0,1)$ is defined by the method of real
interpolation between $\xHone(D)$ and $\xLtwo(D)$ (see \eg
\cite{bkTartar_2007}), \ie
\begin{equation}
\xHn{s}(D) = [\xLtwo(D),\xHone(D)]_{s,2}.
\end{equation}
We also define the space $\xHone_0(D)$ to be the completion of $\calD(D)$
with respect to the following norm:
\begin{equation}
\|v\|_{\xHone_0(D)}:= \|\GRAD v\|_{\bL^2(D)}.
\end{equation}
This allows us again to define the space $\xHn{s}_0(D)$ for $s\in (0,1)$
by the method of real interpolation between $\xHone_0(D)$ and $\xLtwo(D)$ as follows:
\begin{equation}
\xHn{s}_0(D) = [\xLtwo(D),\xHone_0(D)]_{s,2}.
\end{equation}
This definition is slightly different from what is usually done; the
only difference occurs at $s=\frac12$. What we hereafter denote by
$\xHn{\frac12}_0(D)$ is usually denoted by $\xHn{\frac12}_{00}(D)$
elsewhere.  Owing to these definitions, the spaces $\xHn{s}_0(D)$ and
$\xHn{s}(D)$ coincide for $s\in [0,\tfrac12]$ and their norms are
equivalent \bal{but not uniform with respect to $s$ as $s$ approached
  $\frac12$}, (see \eg \cite[Thm 11.1]{LM68}, \bal{\cite[Thm.~1.4.2.4]{bkGrisvard} or}
\cite[Chap. 33]{bkTartar_2007}).  The space $\xHn{-s}(D)$ is defined
by duality with $\xHn{s}_0(D)$ for $0\le s\le 1$, \ie with a slight
abuse of notation we define
\begin{equation*}
\|v\|_{\xHn{-s}(D)} = \sup_{0\not=w\in \xHn{s}_0(D)}\frac{\int_D v w \dif \bx}{\|w\|_{\xHn{s}_0(D)}}.
\end{equation*}
It is a standard result that $\xHn{-s}(D)=[\xLtwo(D),\xHn{-1}(D)]_{s,2}$, see \bal{\cite[Thm.~3.1]{LM68}.}

The above definitions are naturally extended to the vector-valued
Sobolev spaces $\bH^s(D)$ and $\bH^s_0(D)$. We additionally introduce
the following spaces of vector-valued functions:
\begin{align}
  \Hcurl[D] &= \{\bv\in \bL^2(D)\ | \ \ROT \bv\in \bL^2(D) \}, \\
  \Hzcurl[D] &= \{\bv\in \bL^2(D)\ | \ \ROT \bv\in \bL^2(D), \
  \bv\CROSS\bn|_{\partial D} =0 \}, \\
  \Hrcurl[D] &= \{\bv\in \bL^2(D)\ | \ \ROT \bv\in \bH^r(D) \}, \\
 \Hzrcurl[D] &= \{\bv\in \bL^2(D)\ | \ \ROT \bv\in \bH^r(D), \
  \bv\CROSS\bn|_{\partial D} =0 \},
\end{align}
all equipped with their natural norm; for instance,
$\|\bv\|_{\Hcurl[D]}^2 = \|\bv\|_{\bL^2(D)}^2 +
\|\ROT\bv\|_{\bL^2(D)}^2$.

\subsection{The domain}
The domain $\Omega$ is a bounded open set in $\Real^3$, \bal{but 
  the analysis presented in this paper can be applied to the
  two-dimensional counterparts of the problem.}  The boundary of
$\Omega$, say $\front$, is assumed to have the Lipschitz regularity
and to be connected.  To simplify the presentation we also assume that
$0\in \Omega$ and $\Omega$ is star-shaped with respect to an open
neighborhood of $0$.   \bal{
This assumption implies the compact embedding stated in the following lemma.
\begin{lem}\label{lem:star_shaped}
  Let $\Omega$ be an open subset of $\Real^d$ for $d>0$. Then the
  following statements are equivalent: \begin{enumerate}[(i)]
  \item There exists a neighborhood of the origin $\mathcal V$ such
    that $\Omega$ is star-shaped with respect to all points in
    $\mathcal V$,
\item There exists $\chi>0$ such that, for any $\delta\in(0,1)$, 
\begin{equation}\label{eq:star_shape_neigh}
\Omega_\delta :=(1-\delta)\Omega + B(0,\delta\chi) \subset\subset\Omega,
\end{equation}
where the notation $ \subset\subset$ indicates a compact embedding.
\end{enumerate}
\end{lem}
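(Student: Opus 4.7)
The two implications are of different nature. For (ii) $\Rightarrow$ (i), I would simply take $\mathcal V := B(0,\chi)$. Reparametrizing an element of $B(0,\delta\chi)$ as $\delta x$ with $|x|<\chi$, the inclusion in (ii) reads: for every $y \in \Omega$, $x \in \mathcal V$, and $\delta \in (0,1)$, the point $(1-\delta)y + \delta x$ lies in $\Omega$. This is exactly the segment condition for star-shapedness of $\Omega$ with respect to every $x \in \mathcal V$, provided one also knows $\mathcal V \subset \Omega$ so that the endpoint $\delta=1$ is handled. To obtain $\mathcal V \subset \Omega$, I would fix an arbitrary $y_0 \in \Omega$ and let $\delta \to 1^-$ in $(1-\delta)y_0 + B(0,\delta\chi) \subset \Omega$: for any $x$ with $|x|<\chi$ one can choose $\delta$ so close to $1$ that $|x - (1-\delta)y_0| < \delta\chi$, forcing $x \in \Omega$.

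For (i) $\Rightarrow$ (ii), I would pick $\chi > 0$ so that $\overline{B(0,\chi)} \subset \mathcal V$, which is possible since $\mathcal V$ is an open neighborhood of $0$. Because $\overline\Omega$ is compact, the closure of the Minkowski sum satisfies
\begin{equation*}
\overline{\Omega_\delta} \;=\; (1-\delta)\overline\Omega + \overline{B(0,\delta\chi)},
\end{equation*}
which is closed and bounded, hence compact. The only genuine issue—and the main obstacle of the whole lemma—is to establish $\overline{\Omega_\delta} \subset \Omega$, since star-shapedness of $\Omega$ with respect to points of $\mathcal V$ only guarantees that segments $[x,y]$ are in $\Omega$ when $y \in \Omega$, not when $y$ is merely in $\overline\Omega$.

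I would bypass this obstacle by a simultaneous perturbation of the two parameters. Given a point $p = (1-\delta)y + \delta w$ in $\overline{\Omega_\delta}$, with $y \in \overline\Omega$ and $w \in \overline{B(0,\chi)}$, pick a sequence $y_n \in \Omega$ with $y_n \to y$ and correct $w$ by
\begin{equation*}
w_n \;:=\; w + \tfrac{1-\delta}{\delta}\,(y - y_n),
\end{equation*}
so that the identity $p = (1-\delta)y_n + \delta w_n$ holds for every $n$. Since $w_n \to w \in \overline{B(0,\chi)} \subset \mathcal V$ and $\mathcal V$ is open, $w_n \in \mathcal V \subset \Omega$ for all $n$ large enough. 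In particular $\Omega$ is star-shaped with respect to $w_n$, so the whole segment $[w_n, y_n]$ lies in $\Omega$, and $p$ lies on this segment. Hence $p \in \Omega$, which gives $\overline{\Omega_\delta} \subset \Omega$ and completes the compact embedding. No other step in the argument appears delicate.
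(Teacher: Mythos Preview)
Your proposal is correct and follows essentially the same strategy as the paper: in both proofs the crux of $(i)\Rightarrow(ii)$ is a perturbation that replaces a limit point $y\in\overline\Omega$ by a genuine interior point, while absorbing the correction into the ball term so that star-shapedness applies. The bookkeeping differs slightly. The paper builds in a safety factor by choosing $\chi$ with $B(0,2\chi)\subset\mathcal V$; it then picks a single $\tilde y\in\Omega$ with $|y-\tilde y|<\delta\chi$ and rewrites $x=(1-\delta)\tilde y+\delta\chi(r+(1-\delta)z)$, noting $|r+(1-\delta)z|<2$ so that $x\in(1-\delta)\Omega+B(0,2\delta\chi)\subset\Omega$. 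You instead take $\overline{B(0,\chi)}\subset\mathcal V$ and run a sequential argument, letting $w_n\to w\in\mathcal V$ and using openness of $\mathcal V$ to conclude. Your version avoids the factor~2 and is arguably a touch cleaner; the paper's version is more explicit and avoids the passage to sequences. For $(ii)\Rightarrow(i)$ you are more careful than the paper, which simply declares the implication immediate: your verification that $B(0,\chi)\subset\Omega$ via $\delta\to1^-$ is a useful addition.

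One small caveat: you invoke ``$\overline\Omega$ is compact'' to identify $\overline{\Omega_\delta}$ with $(1-\delta)\overline\Omega+\overline{B(0,\delta\chi)}$, which tacitly assumes $\Omega$ bounded. The lemma as stated does not impose this, and indeed $(i)\Rightarrow(ii)$ fails for $\Omega=\mathbb R^d$. The paper's proof has the same implicit assumption (boundedness is declared elsewhere in the paper), so this is a defect of the statement rather than of your argument.
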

\begin{proof}
  Equation~\eqref{eq:star_shape_neigh} immediately implies $(i)$ with
  $\mathcal V=B(0,\chi)$. Let us prove the converse 
  $(i)\Rightarrow(ii)$. Let $\mathcal V$ be a neighborhood of the
  origin and let us assume that $\Omega$ is star-shaped with respect
  to all points in $\mathcal V$.  In particular the following holds:
\begin{equation}\label{e:star}
(1-\delta)\Omega + \delta \calV \subset
\Omega \qquad  \forall \quad \delta\in [0,1].
\end{equation} 
Let $\chi >0$ such that $B(0,2\chi)\subset\mathcal V$ so that in
conjunction with \eqref{e:star}, we have
\begin{equation}\label{eq:nc_embedding}
(1-\delta)\Omega +
B(0,\delta \chi) \subset  (1-\delta)\Omega +
\delta B(0,2\chi) \subset (1-\delta)\Omega + \delta \calV \subset \Omega
\end{equation}
for all $\delta\in (0,1)$. 
Hence, it remains to prove that that the embedding is compact, which is done upon showing that $\overline{\Omega_\delta}\subset\Omega$.
To do this, let  $\{ x_n \}_{n\geq 0} \subset \Omega_\delta$ be a sequence converging to some $x\in\Real^d$ and write $x_n = (1-\delta)y_n + \delta\chi r_n$ for some $y_n\in\Omega$ and $r_n\in B(0,1)$. 
Upon extracting subsequences (still indexed by $n$), there exists $r \in\overline{B(0,1)}$ and $y \in \overline{\Omega}$ such that  $r_n \to r$, $y_n \to y$ and therefore  $x = (1-\delta)y + \delta\chi r$.
As $y\in\overline{\Omega}$, we deduce that the set $B(y,\delta\chi)\cap\Omega$ is non-empty, \ie there exist $\tilde y\in\Omega$ and $z\in B(0,1)$ such that $y = \tilde y + \delta\chi z$.
Rewriting $x = (1-\delta)\tilde y + \delta\chi (r+(1-\delta)z)$, we realize that $x\in(1-\delta)\Omega + B(0,2\delta \chi)$, which yields $x\in\Omega$ (owing to ~\eqref{eq:nc_embedding}) and $\overline{\Omega}_\delta \subset \Omega$. This proves that the embedding $\Omega_\delta\subset\Omega$ is compact.
\end{proof}
}

A key piece of the convergence analysis of the method that we propose
in this paper is based on the existence of a family of smoothing
operators in $\Hzcurl$. This construction is discussed in detail in
\S\ref{s:approx_hcurl}.  The main challenge one encounters when
constructing this family of operators is to make it compatible with
the boundary condition and to commutes with the curl operator. The
purpose of \bal{the star-shape assumption is to make this construction
  possible and to simplify the presentation. It can be lifted for generic
  bounded Lipschitz domains by invoking Proposition 4.15 (or
  Proposition 4.19) from \cite{Hofmann_Mitrea_Taylor_2007}.  
  The results presented in this paper remain valid for any domain
  bounded Lipschitz domains.}

\subsection{Mixed formulation of the problem}
It will prove convenient to reformulate the original problem
\eqref{pre_e:strong} in mixed form to have a better control on the
divergence of the field $\eps \bE$. More precisely, from now on we
consider the following problem: Given a vector field $\bg$, find $\bE$
and $p$ such that
\begin{equation}
\ROT (\kappa\ROT \bE) +  \eps\GRAD p = \eps\bg; 
\quad \DIV(\eps \bE) =0,\quad \bE\CROSS\bn_{|\front}=0,
\quad p_{|\front}=0. \label{bvp}
\end{equation}
The two problems \eqref{bvp} and \eqref{pre_e:strong} are equivalent
if $\DIV(\eps \bg)=0$, since in this case $p=0$ in \eqref{bvp} (apply
the divergence operator to the first equation).

The scalar fields $\kappa$ and $\eps$ are assumed to be piecewise
smooth. More precisely we assume that $\Omega$ is partitioned into $N$
Lipschitz open subdomains $\Omega_1,\cdots,\Omega_N$ such that the
restrictions of $\kappa$ and $\eps$ to these subdomains are smooth.  To
better formalize this assumption we define
\begin{align}
\Sigma &:= \bigcup_{i\neq j}\partial\Omega_i\cap\partial\Omega_j,\\
  \mathrm{W}_\Sigma^{1,\infty}(\Omega) &:=\left\{ \nu\in \xLinfty(\Omega) \ | \
\GRAD(\nu_{|\Omega_i})\in \bL^{\infty}(\Omega_i),\; i=1,\cdots, N\right\}.
\end{align}
We refer to $\Sigma$ as the interface between the subdomains
$\Omega_i$. In the rest of the paper we assume that the fields $\eps$
and $\kappa$ satisfy the following properties: There exist $\eps_{\min}$,
$\kappa_{\min}>0$ such that
\begin{equation}
  \eps,\kappa\in
  \mathrm{W}^{1,\infty}_\Sigma(\Omega),\qquad \text{and} 
  \qquad  \eps \geq \eps_{\min}, \quad \kappa \geq\kappa_{\min}\quad \text{\ae in }\Omega. 
\label{Hyp:eps_mu}
\end{equation}
The following stability results proved in \cite{Bo_Gu_Lu_2012} play
important roles in the stability of the finite element method
developed in this paper:
\begin{thm} \label{Thm:Hs_stability_bvp}
  Assuming that $\eps\bg\in \bL^2(\Omega)$ and \eqref{Hyp:eps_mu},
  Problem \eqref{bvp} has a unique solution in $\Hzcurl\CROSS \xHone_0(\Omega)$. Moreover,
  there exist $c$, $\tau_\eps$ and $\tau_\kappa$, depending on $\Omega$
  and the fields $\eps$ and $\kappa$, so that
\begin{align}
  \|\bE\|_{\bH^s(\Omega)} &\le c \|\eps\bg\|_{\bL^2(\Omega)}, && \forall s \in [0,\tau_\eps),\label{E_in_Hs}\\
  \|\ROT \bE\|_{\bH^s(\Omega)} &\le c \|\eps\bg\|_{\bL^2(\Omega)}, &&
  \forall s \in [0,\tau_\kappa).\label{Rot_E_in_Hs} \\
\|\ROT(\kappa\ROT \bE)\|_{\bL^2(\Omega)} + \|\GRAD p\|_{\bL^2(\Omega)}&\le c  \|\eps\bg\|_{\bL^2(\Omega)},
\label{Rot_kappa_Rot_E_in_L2}
\end{align}
\end{thm}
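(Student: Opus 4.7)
The plan is to recast \eqref{bvp} as a mixed weak problem in $\Hzcurl\CROSS\xHone_0(\Omega)$, use a gradient test function to decouple the Lagrange multiplier $p$, apply Lax--Milgram on the $\eps$-solenoidal subspace to obtain $\bE$, and finally invoke a shift theorem for div-curl systems with piecewise $\mathrm{W}^{1,\infty}_\Sigma$ coefficients on Lipschitz domains to derive the $\bH^s$ bounds.

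First I would write the weak formulation: find $(\bE,p)\in\Hzcurl\CROSS\xHone_0(\Omega)$ such that $\int_\Omega \kappa\,\ROT\bE\cdot\ROT\bv + \int_\Omega \eps\,\GRAD p\cdot\bv = \int_\Omega \eps\bg\cdot\bv$ for all $\bv\in\Hzcurl$ and $\int_\Omega \eps\,\bE\cdot\GRAD q = 0$ for all $q\in\xHone_0(\Omega)$. The decisive observation is that for every $q\in\xHone_0(\Omega)$ the field $\GRAD q$ belongs to $\Hzcurl$: its curl vanishes and, since $q_{|\front}=0$, the gradient is normal to $\front$, so its tangential trace vanishes. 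Testing the first equation with $\bv=\GRAD p$ and using $\ROT\GRAD p=0$ yields $\int_\Omega \eps|\GRAD p|^2 = \int_\Omega \eps\bg\cdot\GRAD p$, which combined with \eqref{Hyp:eps_mu} and Cauchy--Schwarz gives $\|\GRAD p\|_{\bL^2(\Omega)}\le c\,\|\eps\bg\|_{\bL^2(\Omega)}$.

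Next I would restrict the momentum equation to the $\eps$-solenoidal subspace $\bX:=\{\bv\in\Hzcurl : (\eps\bv,\GRAD q)_{\bL^2(\Omega)}=0\ \text{for all } q\in\xHone_0(\Omega)\}$, reducing the problem to: find $\bE\in\bX$ with $\int_\Omega \kappa\,\ROT\bE\cdot\ROT\bv = \int_\Omega \eps(\bg-\GRAD p)\cdot\bv$ for every $\bv\in\bX$. A weighted Weber-type compactness result on Lipschitz domains supplies the compact embedding $\bX\hookrightarrow\bL^2(\Omega)$, and hence the Poincar\'e--Friedrichs estimate $\|\bv\|_{\bL^2(\Omega)}\le c\|\ROT\bv\|_{\bL^2(\Omega)}$ for $\bv\in\bX$; together with \eqref{Hyp:eps_mu} this makes $(\kappa\,\ROT\,\cdot,\ROT\,\cdot)$ coercive on $\bX$. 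Lax--Milgram then produces a unique $\bE\in\bX$ with $\|\bE\|_{\Hcurl}\le c\|\eps\bg\|_{\bL^2(\Omega)}$, covering the $s=0$ case of \eqref{E_in_Hs}--\eqref{Rot_E_in_Hs}; reading $\ROT(\kappa\,\ROT\bE)=\eps(\bg-\GRAD p)$ directly off the first equation then yields \eqref{Rot_kappa_Rot_E_in_L2}.

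The $\bH^s$ shift estimates with $s>0$ constitute the main obstacle and contain the bulk of the technical work. For \eqref{E_in_Hs} the field $\bE$ satisfies the div-curl system $\DIV(\eps\bE)=0$, $\ROT\bE\in\bL^2(\Omega)$, $\bE\CROSS\bn_{|\front}=0$, and the regularity of such systems on Lipschitz domains with piecewise $\mathrm{W}^{1,\infty}_\Sigma$ weight $\eps$ is governed by the worst singular exponent $\tau_\eps>0$ of the scalar transmission problem $-\DIV(\eps\GRAD\,\cdot)$ with Dirichlet data, producing the claimed bound. For \eqref{Rot_E_in_Hs} the same strategy is applied to the auxiliary field $\bw:=\kappa\,\ROT\bE$, which is divergence-free as a curl, has $\ROT\bw\in\bL^2(\Omega)$ by \eqref{Rot_kappa_Rot_E_in_L2}, and inherits the boundary condition $\bw\cdot\bn_{|\front}=0$ from $\bE\CROSS\bn_{|\front}=0$ (the tangential vanishing of $\bE$ forces the normal component of its curl to vanish on $\front$); the analogous shift theorem, now driven by $-\DIV(\kappa^{-1}\GRAD\,\cdot)$ with Neumann data, yields $\tau_\kappa>0$ and $\bw\in\bH^s(\Omega)$ for $s<\tau_\kappa$, after which $\ROT\bE=\kappa^{-1}\bw$ inherits the same regularity because $\kappa^{-1}\in\mathrm{W}^{1,\infty}_\Sigma(\Omega)$. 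The detailed verification of both shift theorems under only Lipschitz regularity of $\front$ and merely piecewise $\mathrm{W}^{1,\infty}$ smoothness of $\eps$ and $\kappa$ is the content of \cite{Bo_Gu_Lu_2012}.
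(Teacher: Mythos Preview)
The paper does not prove this theorem; it is quoted from \cite{Bo_Gu_Lu_2012}, and your sketch is essentially the argument carried out there: decouple $p$ by testing with gradients, apply Lax--Milgram on the $\eps$-solenoidal subspace of $\Hzcurl$ (via a weighted Weber-type Poincar\'e inequality), read off \eqref{Rot_kappa_Rot_E_in_L2} from the equation, and then invoke div--curl shift theorems on Lipschitz domains with piecewise $\mathrm{W}^{1,\infty}$ coefficients to obtain \eqref{E_in_Hs} and \eqref{Rot_E_in_Hs}.

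One slip to correct: the auxiliary field $\bw=\kappa\,\ROT\bE$ is \emph{not} divergence-free (it is not a curl when $\kappa$ is nonconstant). What holds is $\DIV(\kappa^{-1}\bw)=\DIV(\ROT\bE)=0$, together with $\ROT\bw\in\bL^2(\Omega)$ and $\bw\SCAL\bn_{|\front}=0$. This weighted divergence constraint is exactly why the governing scalar problem is the Neumann transmission problem for $-\DIV(\kappa^{-1}\GRAD\,\cdot)$, which you state correctly in the next sentence; once $\bw\in\bH^s(\Omega)$ for $s<\tau_\kappa$, the conclusion $\ROT\bE=\kappa^{-1}\bw\in\bH^s(\Omega)$ follows since $\kappa^{-1}\in\mathrm{W}^{1,\infty}_\Sigma(\Omega)$ is a multiplier on $\bH^s(\Omega_\Sigma)$ for $s<\tfrac12$. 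With that adjustment your outline matches the cited proof.
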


\begin{remark}
In general the regularity indices $\tau_\eps$ and $\tau_\kappa$ are
smaller than $\frac12$ when the domain $\Omega$ is not convex and the
scalar field $\eps$ and $\kappa$ are discontinuous across $\Sigma$.
\end{remark}

\section{Smooth approximation in $\Hzcurl$}\label{s:approx_hcurl}%
We introduce in this section a smoothing operator in $\Hzcurl$ that
will be used to prove the convergence of the finite element
approximation. The key difficulty that we are facing is to find an
approximation that is compatible with the boundary condition in
$\Hzcurl$ and commutes with the curl operator. We essentially proceed
as in \cite{BoGu2011} but modify the argument to correct an incorrect
statement made therein. When invoking $\calC_h (A\bE)_\eps$ in the
proof of Lemma~3.3 in \cite{BoGu2011} it was incorrectly assumed that
$(A\bE)_\eps$ is in $\Hzcurl$, which is not the case. We resolve this
issue in the present construction by introducing an additional scaling
operator, $S^\delta_D$. {Some of the tools introduced in this
section are similar in spirit to those developed in
\cite{Schoberl_2008,Christiansen_Winther_mathcomp_2008,Arnold_Falk_Winther_2010}}

\subsection{Extension operator} \label{Sec:extension}
Let $D$ be an open Lipschitz domain in $\Real^3$.  For any
$\bF\in\bL^1(D)$, we denote $E_{D}\bF$ the extension of $\bF$ by $0$,
\ie
\begin{equation}\label{ext_by_zero}%
E_D\bF(\bx) = \left\{\begin{array}{cl}%
\bF(\bx) &\textnormal{ if }\bx\in D,\\
0 &\textnormal{ elsewhere.}%
\end{array}\right.
\end{equation}%
Let $\delta\in [0,\frac12]$, define $\bar\delta:=1-\delta$ and set
$D_\delta := \bar\delta D$.  \bal{Note that the assumption on $\delta$
  means that $\bar\delta\in [\frac12,1]$, \ie the quantity
  $\bar\delta^{-1}$ is uniformly bounded with respect to $\delta$;
  this observation will be used repeatedly. } We define the scaling
operator $S^\delta_D : \bL^1(D) \longmapsto \bL^1(D_\delta)$ by
\begin{equation}\label{dil_op}%
\forall\bF\in\bL^1(D),\ \forall \bx\in D_\delta,\quad 
(S^\delta_D \bF)(\bx) := \bF\left(\bx\bar\delta^{-1}\right).
\end{equation}
\begin{lem}\label{Lem:tilde_curl}
The following commuting properties hold:
\begin{align}
S^\delta_{\Real^3} E_D &= E_{D_\delta} S^\delta_{D} \label{SE_ES}\\
\partial_{x_i} (S^\delta_D \bF) &= \bar\delta^{-1}
S^\delta_D (\partial_{x_i}\bF),
&& \forall \bF\in \bL^1(D),\ \forall i=1,\ldots,d,  \label{partialS_Spartial} \\
\ROT (E_D\bF) &= E_{D}(\ROT \bF),&& \forall \bF\in
\Hzcurl[D],
\label{RotE_ERot}\\
 \GRAD (E_D\bF) &= E_{D}(\GRAD \bF),&& \forall \bF\in \bH^1_0(D).
\label{GradE_EGrad}
\end{align}
\end{lem}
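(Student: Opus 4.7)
The four properties split into two pairs. Identities \eqref{SE_ES} and \eqref{partialS_Spartial} are elementary pointwise identities that follow directly from the definitions of $E_D$ and $S^\delta_D$, whereas \eqref{RotE_ERot} and \eqref{GradE_EGrad} are distributional statements that crucially exploit the vanishing tangential (respectively full) trace of $\bF$ on $\partial D$.

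For \eqref{SE_ES}, the plan is simply to evaluate both sides at an arbitrary $\bx\in\Real^3$: if $\bx\in D_\delta$ then $\bar\delta^{-1}\bx\in D$ and both sides reduce to $\bF(\bar\delta^{-1}\bx)$; otherwise $\bar\delta^{-1}\bx\notin D$ and both sides vanish. Identity \eqref{partialS_Spartial} is verified first on smooth $\bF$ by the classical chain rule $\partial_{x_i}[\bF(\bar\delta^{-1}\bx)] = \bar\delta^{-1}(\partial_{x_i}\bF)(\bar\delta^{-1}\bx)$, and then extended to arbitrary $\bF\in\bL^1(D)$ in the sense of distributions by testing against $\varphi\in\calD(D_\delta)$ and applying the linear change of variable $\by=\bar\delta^{-1}\bx$.

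For \eqref{RotE_ERot}, the strategy is to establish the equality in $\calD'(\Real^3)$. Given $\bphi\in\calD(\Real^3)^3$,
\[
\langle \ROT(E_D\bF),\bphi\rangle = \int_{\Real^3} E_D\bF\cdot\ROT\bphi\,\dif\bx = \int_D \bF\cdot\ROT\bphi\,\dif\bx,
\]
and the next step is to integrate by parts via the standard Green formula on $\Hcurl[D]$. This produces the volume contribution $\int_D \ROT\bF\cdot\bphi\,\dif\bx = \langle E_D\ROT\bF,\bphi\rangle$, together with a boundary contribution that is the duality pairing between the tangential trace $\bF\CROSS\bn\in\xHn{-1/2}(\partial D)^3$ and the trace of $\bphi$ on $\partial D$. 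The hypothesis $\bF\in\Hzcurl[D]$ forces this boundary contribution to vanish, which concludes the argument.

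The proof of \eqref{GradE_EGrad} is entirely analogous: test against a scalar $\varphi\in\calD(\Real^3)$ applied componentwise, integrate by parts using the classical $\xHone(D)$ Green identity, and kill the boundary term using $\bF|_{\partial D}=0$, which is the defining property of $\bH^1_0(D)$. The only delicate ingredient common to the two distributional identities is invoking the correct trace theory on a Lipschitz domain for $\Hzcurl$ and $\bH^1_0$; these results are classical but are the one point worth citing carefully. No other serious obstacle is anticipated, since the remainder reduces to bookkeeping on test functions.
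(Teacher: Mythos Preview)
Your proposal is correct and follows essentially the same route as the paper: the paper likewise dismisses \eqref{SE_ES} as evident and \eqref{partialS_Spartial} as the chain rule, then proves \eqref{RotE_ERot} by testing against $\bphi\in\pmb{\calD}(\Real^3)$, reducing to $\int_D \bF\cdot\ROT\bphi$, and invoking $\bF\in\Hzcurl[D]$ to pass the curl onto $\bF$ without a boundary term, declaring \eqref{GradE_EGrad} similar. Your write-up is slightly more explicit about the trace-theoretic justification of the integration by parts, but there is no substantive difference in strategy.
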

\begin{proof} \eqref{SE_ES} is evident and \eqref{partialS_Spartial}
  is just the chain rule. We only prove \eqref{RotE_ERot} since the
  proof of \eqref{GradE_EGrad} is similar. Let $\bF$ be a member
  of $\Hzcurl[D]$, then the following holds:
\begin{align*}
\langle  \ROT (E_D\bF) ,\bphi\rangle = \int_{\Real^3} (E_D\bF) \SCAL \ROT \bphi
=  \int_{D} \bF \SCAL \ROT \bphi =
\int_{D} \ROT \bF \SCAL \bphi, \quad 
\forall \bphi \in \pmb{\calD}(\Real^3),
\end{align*}
where the last equality holds owing to $\bF$ being in $\Hzcurl[D]$.
Then 
\[
\langle \ROT (E_D\bF) ,\bphi\rangle = \int_{\Real^3}
E_{D}(\ROT \bF) \SCAL \bphi, \quad \forall \bphi \in
\pmb{\calD}(\Real^3),
\]
which proves the statement.
\end{proof}

\begin{lem}\label{Lem:cont_E_S} {The following holds}
  for all $r\in[0,1]$: (i) the linear operator
  $E_D:\bH_0^r(D)\rightarrow\bH_0^r(\Real^3)$ is bounded; (ii) the
  family of operators
  $\{S_{D}^{\delta}\}:\bH^r(D)\rightarrow\bH^r(D_\delta)$ is uniformly
  bounded with respect to $\delta\in [0,\frac12]$.
\end{lem}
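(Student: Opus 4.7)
The plan is to verify the two endpoints $r=0$ and $r=1$ by direct computation and then obtain every $r\in(0,1)$ via real interpolation.

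For (i), the case $r=0$ is immediate from $\|E_D\bF\|_{\bL^2(\Real^3)}=\|\bF\|_{\bL^2(D)}$. For $r=1$, I would first notice that for $\bphi\in\pmb{\calD}(D)$ the zero extension $E_D\bphi$ lies in $\pmb{\calD}(\Real^3)$, with $\|E_D\bphi\|_{\bH^1(\Real^3)}^{2}=\|\bphi\|_{\bL^2(D)}^{2}+\|\GRAD\bphi\|_{\bL^2(D)}^{2}$, which is controlled by $\|\bphi\|_{\bH^1_0(D)}^{2}$ thanks to the Poincar\'e inequality on the bounded set $D$. Density of $\pmb{\calD}(D)$ in $\bH^1_0(D)$ then delivers the bound on the full space. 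The intermediate range $r\in(0,1)$ follows from the definition $\bH^r_0(D)=[\bL^2(D),\bH^1_0(D)]_{r,2}$ together with the fact that real interpolation preserves the boundedness of any linear operator between the endpoint spaces.

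For (ii), the change of variables $\by=\bar\delta^{-1}\bx$ gives at $r=0$ the identity $\|S^\delta_D\bF\|_{\bL^2(D_\delta)}^{2}=\bar\delta^{3}\|\bF\|_{\bL^2(D)}^{2}$, and combining with the chain rule \eqref{partialS_Spartial} it yields at $r=1$ the identity $\|\GRAD S^\delta_D\bF\|_{\bL^2(D_\delta)}^{2}=\bar\delta\|\GRAD\bF\|_{\bL^2(D)}^{2}$. Since $\bar\delta\in[\tfrac12,1]$, both endpoint estimates are uniform in $\delta$. To cover $r\in(0,1)$ I would argue directly with the $K$-functional: for any decomposition $\bF=\bF_0+\bF_1$, linearity yields $S^\delta_D\bF=S^\delta_D\bF_0+S^\delta_D\bF_1$, so the endpoint estimates combine into $K(t,S^\delta_D\bF;\bL^2(D_\delta),\bH^1(D_\delta))\le C\,K(t,\bF;\bL^2(D),\bH^1(D))$ with $C$ independent of $\delta$. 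Squaring, multiplying by $t^{-2r-1}$ and integrating over $t\in(0,\infty)$ then produces the desired uniform $\bH^r$-bound.

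The only genuine subtlety is the uniformity in $\delta$ in (ii): the target $\bH^r(D_\delta)$ shifts with $\delta$, so one cannot simply cite an off-the-shelf interpolation theorem for a single operator between fixed spaces. The $K$-functional argument handles this cleanly because it only references the endpoint norms, in which uniformity has already been secured at the preceding step.
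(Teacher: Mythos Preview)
Your proposal is correct and follows essentially the same route as the paper: establish the endpoints $r=0$ and $r=1$ by direct computation (extension by zero for (i), change of variables and the chain rule \eqref{partialS_Spartial} for (ii)), then interpolate. The paper's proof is terser and simply invokes ``interpolation theory'' without spelling out the $K$-functional argument; your explicit treatment of uniformity in $\delta$ across the moving target spaces $\bH^r(D_\delta)$ is a worthwhile clarification that the paper omits. One minor remark: since the paper equips $\bH^1_0(\Real^3)$ with the pure gradient norm, $E_D$ is in fact an isometry at $r=1$ and the Poincar\'e step is unnecessary, though of course harmless.
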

\begin{proof}
It is a standard result that $E_D$ is a continuous operator from
$\bL^2(D)$ to $\bL^2(\Real^3)$ and from $\bH_0^1(D)$ to
$\bH_0^1(\Real^3)$, see \cite{02208228}. Then the first assertion follows directly from the
interpolation theory. For the second part, a scaling argument ensures
that $S_{D}^\delta$ is a continuous operator from
$\bL^2(D)$ to $\bL^2(D_\delta)$. Using~\eqref{partialS_Spartial},
we infer that it is also a continuous operator from $\bH^1(D)$
to $\bH^1(D_\delta)$. The conclusion follows from the interpolation
theory.
\end{proof}

Taking $r\in\left[0,\frac12\right)$ and using the fact that the spaces
$\bH_0^r(\Omega)$ and $\bH^r(\Omega)$ coincide (with equivalent
norms), we infer that there exists $c$ such that,
\begin{equation}
  \forall\bF\in\bH^r(\Omega),\qquad
  \|E_\Omega\bF\|_{\bH^r(\Real^3)}\leq c\,\|\bF\|_{\bH^r(\Omega)}.
\end{equation}
Moreover, using this inequality and the second part of
Lemma~\ref{Lem:cont_E_S} with $D=\Real^3$, we infer that
$S_{\Real^3}^{\delta}E_\Omega: \bH^r(\Omega)\rightarrow\bH^r(\Real^3)$
is a linear continuous operator, and there exists $c$, uniform in
$\delta$, such that
\begin{equation}\label{eq:cont_SE}
  \forall\bF\in\bH^r(\Omega),\qquad  
  \|S_{\Real^3}^{\delta}E_\Omega\bF\|_{\bH^r(\Real^3)}\leq c\,\|\bF\|_{\bH^r(\Omega)}.
\end{equation}
\begin{lem}
The following holds:
\begin{align}
\forall \bF\in \Hzcurl, \qquad  \ROT (S^\delta_{\Real^3}E_\Omega\bF) 
= \bar\delta^{-1} S^\delta_{\Real^3}E_\Omega(\ROT \bF). \label{Rot_S_E}
\end{align}
\end{lem}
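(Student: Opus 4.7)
The proof is a short chain of rewrites combining the two commutation relations already established in Lemma~\ref{Lem:tilde_curl}. The plan is to first extend $\bF$ by zero to $\Real^3$, use the zero tangential trace to pass the curl through $E_\Omega$, and then apply the scaling operator using the chain rule.

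The first step is to observe that since $\bF\in\Hzcurl$ (in particular $\bF\CROSS\bn_{|\front}=0$), identity \eqref{RotE_ERot} applies and yields
\begin{equation*}
\ROT(E_\Omega \bF) = E_\Omega(\ROT \bF)\in\bL^2(\Real^3).
\end{equation*}
In particular $E_\Omega\bF$ belongs to $\bH(\mathrm{curl},\Real^3)$, so its curl is a legitimate $\bL^2$ vector field on which the scaling operator can act.

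The second step is to establish a ``curl version'' of~\eqref{partialS_Spartial} on $\Real^3$, namely that for every $\bG\in\bH(\mathrm{curl},\Real^3)$,
\begin{equation*}
\ROT(S^\delta_{\Real^3}\bG) = \bar\delta^{-1}\, S^\delta_{\Real^3}(\ROT \bG).
\end{equation*}
This is obtained component-by-component from~\eqref{partialS_Spartial}, applied in the sense of distributions to each scalar component $G_j$: the $i$-th partial derivative of $S^\delta_{\Real^3} G_j$ equals $\bar\delta^{-1} S^\delta_{\Real^3}(\partial_{x_i} G_j)$, and assembling these identities in the standard curl formula on $\Real^3$ yields the displayed equality (the common factor $\bar\delta^{-1}$ factors out).

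The final step is simply to specialize this identity to $\bG := E_\Omega\bF$ and to reinsert the result of the first step:
\begin{equation*}
\ROT(S^\delta_{\Real^3} E_\Omega \bF)
= \bar\delta^{-1}\, S^\delta_{\Real^3}(\ROT(E_\Omega\bF))
= \bar\delta^{-1}\, S^\delta_{\Real^3}E_\Omega(\ROT \bF),
\end{equation*}
which is~\eqref{Rot_S_E}. There is no real obstacle here; the only point that deserves a brief verification is that~\eqref{partialS_Spartial}, stated for $\bL^1$ functions, extends in the distributional sense to the $\bL^2$ vector field $E_\Omega \bF$ (which is clear, since the chain rule holds distributionally for affine changes of variables), and that the tangential trace assumption on $\bF$ is exactly what is needed to make the first step work.
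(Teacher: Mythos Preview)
Your proof is correct and follows essentially the same approach as the paper: both arguments combine the chain rule \eqref{partialS_Spartial} (applied componentwise to obtain the curl identity for $S^\delta_{\Real^3}$) with the commutation \eqref{RotE_ERot} between $\ROT$ and $E_\Omega$ for fields in $\Hzcurl$. The only cosmetic difference is the order of the two steps: the paper first pulls $\ROT$ through $S^\delta_{\Real^3}$ and then through $E_\Omega$, whereas you first pull $\ROT$ through $E_\Omega$ and then through $S^\delta_{\Real^3}$.
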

\begin{proof} Let $\bF\in \Hzcurl$.  By \eqref{partialS_Spartial} we
  infer that
\[
\ROT (S^\delta_{\Real^3}E_\Omega\bF) = \bar\delta^{-1} S^\delta_{\Real^3}\ROT (E_\Omega\bF).
\]
Then \eqref{RotE_ERot} from Lemma~\ref{Lem:tilde_curl} implies
\[
\ROT (S^\delta_{\Real^3}E_\Omega\bF) = \bar\delta^{-1} S^\delta_{\Real^3}E_\Omega(\ROT \bF),
\]
since $\bF\in \Hzcurl$.  This concludes the proof.
\end{proof}

\begin{lem} \label{Lem:SERot} The linear operator $S^\delta_{\Real^3}
  E_\Omega :\Hzrcurl[\Omega] \longrightarrow \Hzrcurl[\Real^3] $ is
  bounded for all $r\in [0,\frac12)$.  More precisely there is $c$,
  uniform with respect to $\delta$, so that the following holds:
\begin{equation}
\|\ROT (S^\delta_{\Real^3} E_{\Omega} \bF)\|_{\bH^r(\Real^3)} \le c \|\ROT\bF\|_{\bH^r(\Omega)}.
\label{norm_ROT_S_E}
\end{equation}
\end{lem}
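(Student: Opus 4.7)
The plan is to assemble the estimate directly from the two previously established ingredients: the commuting identity \eqref{Rot_S_E} and the continuity bound \eqref{eq:cont_SE}. Since $\Real^3$ has no boundary, the space $\Hzrcurl[\Real^3]$ coincides with $\Hrcurl[\Real^3]$, so no tangential trace condition needs verification on the target side; the only thing to check is that $S^\delta_{\Real^3}E_\Omega \bF$ belongs to $\bL^2(\Real^3)$ and that its curl lies in $\bH^r(\Real^3)$ with the announced bound.

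First I would take an arbitrary $\bF\in\Hzrcurl[\Omega]$, so $\bF\in\Hzcurl$ with $\ROT\bF\in\bH^r(\Omega)$. The $\bL^2$-bound on $S^\delta_{\Real^3}E_\Omega\bF$ follows immediately from Lemma~\ref{Lem:cont_E_S} applied with $r=0$ (together with the obvious continuity of $E_\Omega$ in $\bL^2$), yielding an estimate uniform in $\delta\in[0,\tfrac12]$.

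Next I would apply the commuting identity \eqref{Rot_S_E}, which is valid since $\bF\in\Hzcurl$:
\begin{equation*}
\ROT\bigl(S^\delta_{\Real^3}E_\Omega \bF\bigr)=\bar\delta^{-1}\,S^\delta_{\Real^3}E_\Omega(\ROT\bF).
\end{equation*}
Because $\ROT\bF\in\bH^r(\Omega)$ with $r\in[0,\tfrac12)$, estimate \eqref{eq:cont_SE} applies to the field $\ROT\bF$ in place of $\bF$, and gives
\begin{equation*}
\|S^\delta_{\Real^3}E_\Omega(\ROT\bF)\|_{\bH^r(\Real^3)}\le c\,\|\ROT\bF\|_{\bH^r(\Omega)},
\end{equation*}
with $c$ independent of $\delta$. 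Combining this with $\bar\delta^{-1}\le 2$ delivers \eqref{norm_ROT_S_E}, and together with the $\bL^2$-bound proves that $S^\delta_{\Real^3}E_\Omega:\Hzrcurl[\Omega]\to\Hrcurl[\Real^3]=\Hzrcurl[\Real^3]$ is bounded uniformly in $\delta$.

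There is no serious obstacle here: the restriction $r<\tfrac12$ is already built into \eqref{eq:cont_SE} (it is what allows the extension by zero to preserve $\bH^r$-regularity without requiring a vanishing trace), and the scaling factor $\bar\delta^{-1}$ arising from the chain rule is harmless because $\delta\le\tfrac12$. The only point worth stressing in the write-up is that \eqref{eq:cont_SE} is being applied to $\ROT\bF$, which is why the hypothesis $\bF\in\Hzrcurl[\Omega]$ (and not merely $\bF\in\bH^r(\Omega)$) is exactly what is needed.
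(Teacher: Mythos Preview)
Your proof is correct and follows essentially the same approach as the paper: both rely on commuting the curl through $S^\delta_{\Real^3}E_\Omega$ and on the stability of extension by zero in $\bH^r$ for $r<\tfrac12$. Your version is in fact slightly more streamlined, since you invoke the already-packaged identity \eqref{Rot_S_E} and estimate \eqref{eq:cont_SE} directly, whereas the paper re-derives the same chain via \eqref{SE_ES} and \eqref{RotE_ERot} on $\Omega_\delta$.
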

\begin{proof}
  The identity \eqref{Rot_S_E} implies that $S^\delta_{\Real^3}
  E_\Omega$ is a continuous map from $\Hzcurl[\Omega]$ to
  $\Hzcurl[\Real^3]$. Let r$\in [0,\frac12)$ and let $\bF$ be a member
  of $\Hzrcurl[\Omega]$. A simple scaling argument implies that
  $S^\delta_\Omega \bF$ is a member of $\Hzrcurl[\Omega_\delta]$.
  Since $\ROT S^\delta_\Omega \bF$ is in $\bH^r(\Omega)$ and $r\in
  [0,\frac12)$, the extension by zero is stable in $\bH^r(\Real^3)$,
  \ie $E_{\Omega_\delta}\ROT S^\delta_\Omega \bF$ is a member of
  $\bH^r(\Real^3)$ and there is a constant $c$, uniform with respect
  to $\bF$ and $\delta$, so that 
\begin{align*}
\|E_{\Omega_\delta}\ROT S^\delta_\Omega \bF\|_{\bH^r(\Real^3)} & \le c'
\|\ROT S^\delta_\Omega \bF\|_{\bH^r(\Omega_\delta)} = c'
\bar\delta^{-1} \|S^\delta_\Omega \ROT \bF\|_{\bH^r(\Omega_\delta)} \\
& \le c \|\ROT\bF\|_{\bH^r(\Omega)}.
\end{align*}
Then, applying \eqref{SE_ES} and \eqref{RotE_ERot} to
the above inequality gives
\begin{align*}
  \|\ROT (S^\delta_{\Real^3} E_{\Omega} \bF)\|_{\bH^r(\Real^3)} & =
  \|\ROT (E_{\Omega_\delta}S^\delta_\Omega \bF)\|_{\bH^r(\Real^3)} =
  \|E_{\Omega_\delta}\ROT S^\delta_\Omega \bF\|_{\bH^r(\Real^3)} \\
  & \le c\, \|\ROT\bF\|_{\bH^r(\Omega)},
\end{align*}
which concludes the proof.
\end{proof}
We now state a lemma that gives some important approximation
properties of the operator $\bF\mapsto S^\delta_{\Real^3} E_\Omega \bF$.
\begin{lem}\label{l:approx_ftilde_delta} 
  There exists $K_1$ \bal{such that the following
  holds for every $\bF\in\bH_0^r(\Omega)$ and for all $r\in [0,1]$:}
\begin{align}%
  \norme[\bH_0^s(\Omega)]{\bF-S^\delta_{\Real^3} E_\Omega \bF} \le
  K_1\delta^{r-s}\norme[\bH_0^{r}(\Omega)]{\bF}
  \qquad 0\le s\le r\le 1,\label{f_tilde_delta_Hs}
\end{align}
and for all $r\in [0,\frac12)$ there exists ${K_2}$, such that the
following holds every $\bF\in\Hzrcurl$:
\begin{align}
  \norme[\bH^s(\Omega)]{\ROT(\bF-S^\delta_{\Real^3} E_\Omega \bF)} &
  \le {K_2}\delta^{r-s}\norme[\bH^{r}(\Omega)]{\ROT\bF} && 0\le s\le r<
  \frac12. \label{f_tilde_delta_rot_Hs}
\end{align}
\end{lem}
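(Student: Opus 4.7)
The plan is to prove \eqref{f_tilde_delta_Hs} by establishing three endpoint bounds and interpolating, and then to deduce \eqref{f_tilde_delta_rot_Hs} by applying \eqref{f_tilde_delta_Hs} to $\bG:=\ROT\bF$ via the commuting identity \eqref{Rot_S_E}. Set $T_\delta:= I - S^\delta_{\Real^3}E_\Omega$, viewed as an operator on functions defined on $\Omega$.

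For \eqref{f_tilde_delta_Hs}, the three endpoints I target are: (a) $\|T_\delta\bF\|_{\bL^2(\Omega)}\le C\|\bF\|_{\bL^2(\Omega)}$, (b) $\|T_\delta\bF\|_{\bH_0^1(\Omega)}\le C\|\bF\|_{\bH_0^1(\Omega)}$, and (c) $\|T_\delta\bF\|_{\bL^2(\Omega)}\le C\delta\|\bF\|_{\bH_0^1(\Omega)}$, with $C$ independent of $\delta\in[0,\tfrac12]$. Bounds (a) and (b) follow from the triangle inequality together with Lemma~\ref{Lem:cont_E_S}, observing that when $\bF\in\bH_0^1(\Omega)$ the extension $E_\Omega\bF$ belongs to $\bH_0^1(\Real^3)$ and $S^\delta_{\Real^3}E_\Omega\bF$ has compact support in $\overline{\Omega_\delta}\subset\Omega$ (by Lemma~\ref{lem:star_shaped}), so its restriction to $\Omega$ lies in $\bH_0^1(\Omega)$. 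The technical core is (c): by density I reduce to $\bF\in\pmb{\calD}(\Omega)$ and write, for $\bx\in\Omega$,
\begin{align*}
(S^\delta_{\Real^3}E_\Omega\bF - \bF)(\bx) = \int_0^1 \GRAD(E_\Omega\bF)\bigl(\bx + t(\bx/\bar\delta - \bx)\bigr)\SCAL(\bx/\bar\delta - \bx)\,dt;
\end{align*}
using $|\bx/\bar\delta - \bx| = |\bx|\delta/\bar\delta\le 2\delta\,\mathrm{diam}(\Omega)$, Cauchy--Schwarz in $t$, and the change of variables $\by=\bx(1+t\delta/\bar\delta)$ (whose Jacobian is bounded in $[1,8]$), one bounds the integral by $C\delta^2\|\GRAD\bF\|_{\bL^2(\Omega)}^2$. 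Two applications of real interpolation then yield \eqref{f_tilde_delta_Hs}: interpolating (a) and (b) gives $T_\delta\colon\bH_0^s(\Omega)\to\bH_0^s(\Omega)$ with norm $C$ uniform in $\delta$; interpolating (b) with (c) (read as $\bH_0^1\to\bL^2$) gives $T_\delta\colon\bH_0^1(\Omega)\to\bH_0^s(\Omega)$ with norm $C\delta^{1-s}$; reinterpolating these two on the source side with parameter $\theta=(r-s)/(1-s)$ gives the exponent $\theta(1-s)=r-s$, covering the full triangle $0\le s\le r\le 1$.

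For \eqref{f_tilde_delta_rot_Hs}, the identity \eqref{Rot_S_E} rewrites the quantity of interest as
\begin{align*}
\ROT(\bF - S^\delta_{\Real^3}E_\Omega\bF) = \bigl(\bG - S^\delta_{\Real^3}E_\Omega\bG\bigr) - (\bar\delta^{-1}-1)\,S^\delta_{\Real^3}E_\Omega\bG,
\end{align*}
where $\bG:=\ROT\bF\in\bH^r(\Omega)$. Because $r<\tfrac12$, the spaces $\bH^r(\Omega)$ and $\bH_0^r(\Omega)$ coincide with equivalent norms, so \eqref{f_tilde_delta_Hs} applied to $\bG$ bounds the first bracket by $C\delta^{r-s}\|\bG\|_{\bH^r(\Omega)}$. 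The remaining term is estimated using $|\bar\delta^{-1}-1|=\delta/\bar\delta\le 2\delta$ and the continuity bound \eqref{eq:cont_SE}, which together give $C\delta\|\bG\|_{\bH^r(\Omega)}$; since $\delta\le 1$ and $r-s\le 1$, one has $\delta\le\delta^{r-s}$, and the two estimates combine to yield \eqref{f_tilde_delta_rot_Hs}.

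The main obstacle is the endpoint (c): the argument rests on the boundedness of $\Omega$ (to control $|\bx|$) and on the fact that the translation $\bx\mapsto \bx/\bar\delta$ is bounded by $\delta$ times the diameter. Everything else is a clean consequence of the commuting and scaling properties assembled in Lemma~\ref{Lem:tilde_curl} and Lemma~\ref{Lem:cont_E_S}, plus standard real interpolation on the scale $\{\bH_0^s(\Omega)\}_{s\in[0,1]}$.
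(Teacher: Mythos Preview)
Your proposal is correct and follows essentially the same route as the paper: the three endpoint bounds (a), (b), (c) with the integral representation for (c), followed by real interpolation for \eqref{f_tilde_delta_Hs}, and then the commuting identity \eqref{Rot_S_E} plus the equivalence $\bH^r=\bH_0^r$ for $r<\tfrac12$ to deduce \eqref{f_tilde_delta_rot_Hs}. The only cosmetic differences are that the paper splits $\ROT\bF-\bar\delta^{-1}S^\delta_{\Real^3}E_\Omega\ROT\bF$ as $(1-\bar\delta^{-1})\ROT\bF+\bar\delta^{-1}T_\delta(\ROT\bF)$ rather than your $T_\delta(\ROT\bF)-(\bar\delta^{-1}-1)S^\delta_{\Real^3}E_\Omega\ROT\bF$, and cites the Lions--Peetre reiteration theorem directly instead of spelling out your two-step interpolation; both variants are equivalent.
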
%
\begin{proof}
  We prove the first inequality \eqref{f_tilde_delta_Hs} by means of
  an interpolation technique. \bal{Let $\bF\in\bH_0^1(\Omega)$, then using}
  Lemma~\ref{Lem:tilde_curl} together with $d=3$, we have
\begin{align*}
  \norme[\bL^2(\Omega)]{\bF-S^\delta_{\Real^3} E_\Omega \bF} &\le
  \norme[\bL^2(\Omega)]{\bF} +\norme[\bL^2(\Omega)]{S^\delta_{\Real^3}
    E_\Omega \bF} \le
  \left(1+\bar\delta^{\frac{d}{2}}\right)\norme[\bL^2(\Omega)]{\bF}
  \le 2\norme[\bL^2(\Omega)]{\bF}.\\
  \norme[\bH_0^1(\Omega)]{\bF-S^\delta_{\Real^3} E_\Omega \bF} & =
  \norme[\bL^2(\Omega)]{\GRAD(\bF-S^\delta_{\Real^3} E_\Omega \bF)}\le
  \norme[\bL^2(\Omega)]{\GRAD\bF}
  +\norme[\bL^2(\Omega)]{\GRAD (S^\delta_{\Real^3} E_\Omega \bF)} \\
  & = \norme[\bL^2(\Omega)]{\GRAD\bF} + \bar\delta^{-1}
  \norme[\bL^2(\Omega)]{S^\delta_{\Real^3}\GRAD (E_\Omega\bF)} =
  \norme[\bL^2(\Omega)]{\GRAD\bF} + \bar\delta^{\frac{d}{2}-1}
  \norme[\bL^2(\Omega)]{E_\Omega\GRAD
    \bF}\\
  &=
  \left(1+\bar\delta^{\frac{d}{2}-1}\right)\norme[\bL^2(\Omega)]{\GRAD
    \bF} \le 2\norme[\bH_0^1(\Omega)]{\bF}.
\end{align*}

We now derive an estimate for the mapping $\bH_0^1(\Omega)\ni \bF
\mapsto \bF-S^\delta_{\Real^3} E_\Omega \bF\in \bL^2(\Omega)$.  The definition of
$S^\delta_{\Real^3} E_\Omega \bF$ implies that
\begin{align*}
  \norme[\bL^2(\Omega)]{\bF-S^\delta_{\Real^3} E_\Omega \bF}^2 &=
  \int_\Omega\left|(E_\Omega\bF)(\bx)
    -(E_\Omega\bF)\left(\bx \bar\delta^{-1}\right)\right|^2\textrm{ d}\bx \\
  &= \int_\Omega\left|\int_0^1\GRAD (E_\Omega\bF)
    \left((1-t)\bx+t\bx \bar\delta^{-1}\right)\SCAL\frac{\delta}{\bar\delta}\bx
    \textrm{ d}t\right|^2\textrm{ d}\bx \\
  &\le \int_\Omega \frac{\delta^2}{\bar\delta^2}|\bx|^2
  \int_0^1\left|\GRAD(E_\Omega\bF) \left((1-t)\bx+t\bx\bar\delta^{-1}\right)\right|^2\textrm{
    d}t\textrm{ d}\bx.
\end{align*}
Then, we introduce $M := \sup_{\bx\in\Omega}|\bx|$, and we apply
Fubini's theorem:
\begin{align*}
  \norme[\bL^2(\Omega)]{\bF-S^\delta_{\Real^3}E_\Omega\bF}^2 &\le
  M^2\frac{\delta^2}{\bar\delta^2}\int_0^1
  \int_\Omega\left|\GRAD(E_\Omega\bF) \left((1-t)\bx+t\bx\bar\delta^{-1}\right)\right|^2\textrm{
    d}\bx\textrm{d}t
\end{align*}
Using a change of variable in the inner integral, we finally obtain
\begin{align*}
  \norme[\bL^2(\Omega)]{\bF-S^\delta_{\Real^3}E_\Omega\bF}^2 &\le
  M^2\frac{\delta^2}{\bar\delta^2}\norme[\bL^2(\Omega_\delta)]{\GRAD(E_\Omega\bF)}^2
  \int_0^1 \left(\frac{\bar\delta}{\bar\delta +\delta t}\right)^3\textnormal{d}t
  \le M^2 \delta^2
  \bar\delta^{-2}\norme[\bL^2(\Real^3)]{\GRAD (E_\Omega\bF)}^2.
\end{align*}
Since $\bF\in \bH^1_0(\Omega)$, we have
$\norme[\bL^2(\Real^3)]{\GRAD(E_\Omega\bF)}=\norme[\bL^2(\Real^3)]{E_\Omega
  \GRAD\bF} =\|\GRAD\bF\|_{\bL^2(\Omega)}$.
Using now the assumption $\delta\le \frac12$, \ie
$\bar\delta^{-1}\le 2$, we finally deduce that
\begin{equation}
  \norme[\bL^2(\Omega)]{\bF-S^\delta_{\Real^3}E_\Omega\bF} 
  \le 2M\delta\norme[\bL^2(\Omega)]{\GRAD\bF}
  =2M\delta\norme[\bH^1_0(\Omega)]{\bF}.
\end{equation}
We now set $K_1:=\max(2,2M)$ and we have proven that
\begin{align*}
\norme[\bL^2(\Omega)]{\bF-S^\delta_{\Real^3}E_\Omega\bF} &\le K_1\norme[\bL^2(\Omega)]{\bF}, \\
\norme[\bL^2(\Omega)]{\bF-S^\delta_{\Real^3}E_\Omega\bF} &\le K_1\delta\norme[\bH_0^1(\Omega)]{\bF}, \\
\norme[\bH_0^1(\Omega)]{\bF-S^\delta_{\Real^3}E_\Omega\bF} &\le K_1\norme[\bH_0^1(\Omega)]{\bF}.
\end{align*}
We conclude that \eqref{f_tilde_delta_Hs} holds by using the
Lions-Peetre Reiteration Theorem. 

We now turn our attention to \eqref{f_tilde_delta_rot_Hs}.  Let $r\in
[0,\frac12)$ and consider $s \in [0,r]$. Let $\bF$ be a member of
$\Hzrcurl$.  We observe first that $S^\delta_{\Real^3}E_\Omega\bF$ is
also in $\Hzrcurl$ owing to Lemma~\ref{Lem:SERot}.  Using
  \eqref{Rot_S_E} gives
\begin{align*}
  \norme[\bH_0^s(\Omega)]{\ROT(\bF-S^\delta_{\Real^3}E_\Omega\bF)}
  &= \norme[\bH_0^s(\Omega)]{\ROT\bF-\bar\delta^{-1}S^\delta_{\Real^3}E_\Omega \ROT\bF}\\
  &\le \norme[\bH_0^s(\Omega)]{\ROT\bF-\bar\delta^{-1}\ROT\bF}
  +  \bar\delta^{-1}\norme[\bH_0^s(\Omega)]{\ROT\bF-S^\delta_{\Real^3} E_\Omega (\ROT \bF)}\\
  &\le \delta \bar\delta^{-1}\norme[\bH_0^s(\Omega)]{\ROT\bF} +
  K_1\bar\delta^{-1}\delta^{r-s}\norme[\bH_0^{r}(\Omega)]{\ROT\bF}.
\end{align*} 
Using $\delta<\frac12$, \ie $\bar\delta^{-1}\le 2$, we have
\[
  \norme[\bH_0^s(\Omega)]{\ROT(\bF-S^\delta_{\Real^3}E_\Omega\bF)} 
  \le 2(K_1 + \delta^{1-r+s})\delta^{r-s}\norme[\bH_0^{r}(\Omega)]{\ROT\bF},
\]
Remembering that $\bH^s(\Omega)$ and $\bH_0^s(\Omega)$ coincide for
$0\le {s\leq } r<\frac12$ and that their norm are equivalent, \bal{say $\|\ROT\bF\|_{\bH_0^r(\Omega)}\le c(r)
\|\ROT\bF\|_{\bH^r(\Omega)}$, the above
inequality yields \eqref{f_tilde_delta_rot_Hs}  with $K_2 = 2(K_1+1) c(r)$} since $1-r+s\ge 1-r >
0$ and $\delta\leq\frac12$.
\end{proof}%

\subsection{Smooth approximation} \label{Sec:smooth_approximation} We
now use the above extension operator $S^\delta_{\Real^3}E_\Omega$
together with a mollification to construct a smooth approximation
operator. For $\delta\in (0,\frac12)$, we set
\begin{equation}
\rho_\delta(\bx):=\delta^{-3}\rho(\bx/\delta),\ \textnormal{where
}\rho(\bx):=\left\{\begin{array}{ll}%
    \eta\exp\left(-\frac 1{1-|\bx|^2}\right), &\textnormal{if }|\bx| < 1, \\
    0, &\textnormal{if }|\bx| \ge 1,%
\end{array}\right.
\end{equation}
where $\eta$ is chosen so that $\int_{\polR^3}\rho(\bx) \dif \bx= 1$. We define a
family of approximation operators $\{\calK_\delta\}_{\delta>0}$ in the
following way:
\begin{equation}\label{approx_op}%
\calK_\delta\bF = \rho_{\delta\chi}\star (S^\delta_{\Real^3}E_\Omega \bF),
\qquad \forall \bF \in \bL^1(\Omega)
\end{equation}
where $\chi$ is the constant introduced in \eqref{eq:star_shape_neigh}.%
\begin{thm}\label{thm:approx_op}%
  $\calK_\delta\bF|_{\Omega}$ is in $\pmb{\calC}_0^{\infty}(\Omega)$
  for all $\bF\in\bL^1(\Omega)$. \bal{Let $\ell$ be a positive
    integer. There exists a constant $K$, possibly depending on
    $\ell$,} such that the following estimates hold for any
  $0<\delta<\frac12$:
\begin{align}
  \norme[\bH_0^s(\Omega)]{\bF-\calK_\delta\bF} &\le
  K\delta^{r-s}\norme[\bH_0^{r}(\Omega)]{\bF}
  && 0\le s\le r\le 1 \label{approx_delta} \\
  \norme[\bH^s(\Omega)]{\ROT\bF-\ROT\calK_\delta\bF} &\le
  K\delta^{r-s}\norme[\bH^{r}(\Omega)]{\ROT\bF}
  && 0\le s\le r< \tfrac12 \label{approx_delta_rot} \\
  \norme[\bH^{r}(\Omega)]{\calK_\delta\bF} & \le K
  \chi^{s-r}\delta^{s-r}\norme[\bH^{s}(\Omega)]{\bF} && \bal{0 \le s\le r  \le \ell},\;
  s<\tfrac 12 \label{approx_exp}
\end{align}
and all $\bF \in \bH^r_0(\Omega)$, all $\bF\in\Hzrcurl[\Omega][r]$, and all
$\bF\in \bH^r(\Omega)$, respectively.
\end{thm}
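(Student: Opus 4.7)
The strategy is to treat $\calK_\delta$ as the composition of the scaling-extension operator $S^\delta_{\Real^3}E_\Omega$ and convolution with $\rho_{\delta\chi}$, and to estimate each step separately. For the regularity claim, the function $S^\delta_{\Real^3}E_\Omega\bF$ is supported in $\overline{(1-\delta)\Omega}$, so $\calK_\delta\bF$ is supported in $(1-\delta)\Omega+\overline{B(0,\delta\chi)}$, which lies compactly inside $\Omega$ thanks to \eqref{eq:star_shape_neigh} (Lemma~\ref{lem:star_shaped}); smoothness follows since $\rho_{\delta\chi}\in\calD(\Real^3)$.

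For \eqref{approx_delta} I would use the triangle inequality
\[
\bF-\calK_\delta\bF=(\bF-S^\delta_{\Real^3}E_\Omega\bF)+\bigl(S^\delta_{\Real^3}E_\Omega\bF-\rho_{\delta\chi}\star S^\delta_{\Real^3}E_\Omega\bF\bigr).
\]
The first term is controlled by \eqref{f_tilde_delta_Hs}. For the second term, the classical mollifier estimate $\|g-\rho_\eta\star g\|_{\bH^s(\Real^3)}\le C\eta^{r-s}\|g\|_{\bH^r(\Real^3)}$, obtained from Taylor's formula at $r=1$ and interpolation, applied with $\eta=\delta\chi$ and $g=S^\delta_{\Real^3}E_\Omega\bF$, together with the uniform continuity of $S^\delta_{\Real^3}E_\Omega:\bH^r_0(\Omega)\to\bH^r(\Real^3)$ recorded in Lemma~\ref{Lem:cont_E_S} and \eqref{eq:cont_SE}, yields the bound. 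The estimate \eqref{approx_delta_rot} is proved by applying the same decomposition to $\ROT\bF$, after using that $\ROT$ commutes with convolution and that $\ROT(S^\delta_{\Real^3}E_\Omega\bF)=\bar\delta^{-1}S^\delta_{\Real^3}E_\Omega\ROT\bF$ by \eqref{Rot_S_E}; the first piece is controlled by \eqref{f_tilde_delta_rot_Hs} and the second by the mollifier estimate combined with Lemma~\ref{Lem:SERot} (the factor $\bar\delta^{-1}\le 2$ is harmless). The upper bound $r<\tfrac12$ reflects precisely the range of validity of Lemma~\ref{Lem:SERot} and of extension by zero.

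For the inverse-type estimate \eqref{approx_exp} I would chain two bounds. Since $s<\tfrac12$, extension by zero is continuous on $\bH^s(\Omega)$ and \eqref{eq:cont_SE} gives $\|S^\delta_{\Real^3}E_\Omega\bF\|_{\bH^s(\Real^3)}\le C\|\bF\|_{\bH^s(\Omega)}$ uniformly in $\delta$. The remaining smoothing is provided by the classical convolution estimate $\|\rho_\eta\star g\|_{\bH^r(\Real^3)}\le C(\ell)\eta^{s-r}\|g\|_{\bH^s(\Real^3)}$ for $0\le s\le r\le\ell$, which at integer $r$ follows from Young's inequality and the scaling $\|\partial^\alpha\rho_\eta\|_{L^1(\Real^3)}\le C_\alpha\eta^{-|\alpha|}$, and is extended to non-integer $r$ by interpolation; the constant depends on $\ell$, as announced. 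Applying this with $\eta=\delta\chi$ and $g=S^\delta_{\Real^3}E_\Omega\bF$ gives \eqref{approx_exp}.

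The main subtlety is the endpoint $s=\tfrac12$: the spaces $\bH^s_0$ and $\bH^s$ decouple there, the norm equivalence constant $c(r)$ blows up as $r\nearrow\tfrac12$, and extension by zero ceases to be bounded. This forces the strict inequalities on $s$ and $r$ in \eqref{approx_delta_rot} and \eqref{approx_exp}, and the main book-keeping task is to check that all constants inherited from Lemma~\ref{Lem:cont_E_S}, Lemma~\ref{Lem:SERot}, Lemma~\ref{l:approx_ftilde_delta} and the mollifier estimates are independent of $\delta\in(0,\tfrac12)$ on the prescribed ranges.
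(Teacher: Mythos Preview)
Your proposal is correct and follows essentially the same approach as the paper: the same compact-support argument via \eqref{eq:star_shape_neigh}, the same triangle-inequality decomposition $\bF-\calK_\delta\bF=(\bF-S^\delta_{\Real^3}E_\Omega\bF)+(S^\delta_{\Real^3}E_\Omega\bF-\calK_\delta\bF)$ with Lemma~\ref{l:approx_ftilde_delta} for the first piece and mollifier estimates for the second, and the same chaining of \eqref{eq:cont_SE} with the convolution inverse estimate for \eqref{approx_exp}. One minor remark: for \eqref{approx_delta} you should rely on Lemma~\ref{Lem:cont_E_S} (stability of $E_\Omega$ on $\bH^r_0$ and of $S^\delta_{\Real^3}$ on $\bH^r$) rather than \eqref{eq:cont_SE}, since the latter is stated only for $r<\tfrac12$ whereas \eqref{approx_delta} covers $r\le 1$.
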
%
\begin{proof}
  Owing to the properties of the mollification operator, we have
  $\calK_\delta\bF|_{\Omega}\in\pmb{\calC}^{\infty}(\Omega)$. We now prove
  that the support of $\calK_\delta\bF$ is compact in $\Omega$.  The
  definition of the convolution operation implies that the following
  holds for all $\bx\in\polR^3$:
\begin{align*}
  \calK_\delta\bF(\bx) 
&= \int_{\polR^3}(S^\delta_{\Real^3}E_\Omega\bF)(\by)\rho_{\delta\chi}(\bx-\by)\textrm{ d}\by
  =
  \int_{\bar\delta\Omega}\bF(\by/\bar\delta)\rho_{\delta\chi}(\bx-\by)\textrm{
    d}\by.
\end{align*}
If $\bx\notin \bar\delta\Omega + B(0,\delta\chi)$, then for all $\by\in
\bar\delta\Omega$, we have $\rho_{\delta\chi}(\bx-\by) = 0$ and then
$\calK_\delta\bF(\bx) = 0$. As a result, $\calK_\delta\bF$ is
supported in $\bar\delta\Omega + B(0,\delta\chi)$ which is compactly
embedded in $\Omega$ owing to the assumption
\eqref{eq:star_shape_neigh}. Hence,
$\calK_\delta\bF\in\pmb{\calC}_0^{\infty}(\Omega)$; in particular, we have
$\calK_\delta\bF\in\Hzcurl$. We now prove the estimates
\eqref{approx_delta} to \eqref{approx_exp}. Let us first consider 
$\bF\in \bH^r_0(\Omega)$.
Using that \bal{$S^\delta_{\Real^3}E_\Omega$ is stable from $\bH^s_0(\Omega)$ to
$\bH^s_0(\Real^3)$ together with standard}
approximation properties of the mollification operator 
we deduce that there exists a uniform constant $K_3>0$ so that
\begin{align*}
  \norme[\bH_0^s(\Omega)]{S^\delta_{\Real^3}E_\Omega\bF-\calK_\delta\bF} &\le K_3
  (\delta\chi)^{r-s}\norme[\bH_0^{r}(\Real^3)]{S^\delta_{\Real^3}E_\Omega\bF},
  && 0\le s\le r\le 1.
\end{align*}
Using the triangle inequality and Lemma \ref{l:approx_ftilde_delta} we have
\begin{align*}
\|\bF-\calK_\delta \bF\|_{\bH^s_0(\Omega)} & \le 
\|\bF-S^\delta_{\Real^3}E_\Omega\bF\|_{\bH^s_0(\Omega)} +
 \norme[\bH_0^s(\Omega)]{S^\delta_{\Real^3}E_\Omega\bF-\calK_\delta\bF}\\
 &\le K_1\delta^{r-s}\norme[\bH_0^{r}(\Omega)]{\bF} + K_3
  \chi^{r-s}\delta^{r-s}\norme[\bH_0^{r}(\Real^3)]{S^\delta_{\Real^3}E_\Omega\bF} \\
 &\le (K_1 + 2 K_3
  \chi^{r-s})\delta^{r-s}\norme[\bH_0^{r}(\Omega)]{\bF}.
\end{align*}
This proves \eqref{approx_delta} with $K= K_1+2 K_3$ since $\chi\le 1$
and $s\le r$. Let us now consider $\bF\in \Hzrcurl$. Using that $\ROT
\calK_\delta\bF = \rho_{\delta\chi} \star
\ROT(S^\delta_{\Real^3}E_\Omega\bF)$, we infer that
\begin{align*}
  \norme[\bH^s(\Omega)]{\ROT (S^\delta_{\Real^3}E_\Omega\bF- \calK_\delta\bF)} &\le
  K_3 (\delta\chi)^{r-s}\norme[\bH^{r}(\Real^3)]{\ROT (S^\delta_{\Real^3}E_\Omega\bF)}
  && 0\le s\le r
\end{align*}
Using the triangle inequality together with \eqref{norm_ROT_S_E},
Lemma \ref{l:approx_ftilde_delta}, and assuming that $r<\frac12$ we
have
\begin{align*}
  \norme[\bH^s(\Omega)]{\ROT (\bF- \calK_\delta\bF)} &\le
\norme[\bH^s(\Omega)]{\ROT (\bF-S^\delta_{\Real^3}E_\Omega\bF)} +
 \norme[\bH^s(\Omega)]{\ROT (S^\delta_{\Real^3}E_\Omega\bF- \calK_\delta\bF)}\\
&\le  
{K_2} \delta^{r-s} \norme[\bH^{r}(\Omega)]{\ROT\bF}
+K_3 (\delta\chi)^{r-s}\norme[\bH^{r}(\Real^3)]{\ROT (S^\delta_{\Real^3}E_\Omega\bF)} \\
&\le {\delta^{r-s}}({K_2} + {K_3}\chi^{r-s}) \norme[\bH^{r}(\Omega)]{\ROT\bF},
\end{align*}
which proves \eqref{approx_delta_rot} with $K= {K_2+ K_3}$ since
$\chi\le 1$ and $s\le r$. Let us finally assume that $\bF\in
\bH^r(\Omega)$. Using again the properties of the mollification
operator, \bal{we infer that there exists $K_4(\ell)$ such that}
\begin{align*}
  \norme[\bH^{r}(\Omega)]{\calK_\delta\bF} \le \norme[\bH^{r}(\Real^3)]{\calK_\delta\bF} & \le \bal {K_4}
  (\delta\chi)^{s-r}\norme[\bH^{s}(\Real^3)]{S^\delta_{\Real^3}E_\Omega\bF} && 0 \le s\le
 \bal{ r  \le \ell}.
\end{align*}
Applying~\eqref{eq:cont_SE}, we obtain~\eqref{approx_exp}.  Note that
the assumption $s<\frac12$ is required in order to ensure that $
S^\delta_{\Real^3}E_\Omega\bF\in\bH^s(\Real^3)$.
\end{proof}%
\begin{rem}%
  {In the rest of the paper we will use \eqref{approx_exp} without
  mentioning the coefficient $\chi^{s-r}$} in the right hand sides. Indeed, we will use
  the inequality with $r$ bounded from above by the polynomial degree
  of the approximation; as a result, $\chi^{s-r}$ is uniformly
  bounded.
\end{rem}%
We end this section by mentioning a key commuting property on
$\calK_\delta$.

\begin{lem}\label{Lem:commute_K}
The following holds for any $\bF\in {\Hzcurl}$:
\begin{equation}\label{eq:commute_K}
\bar\delta\ROT\calK_\delta\bF = \calK_\delta(\ROT\bF).
\end{equation}
\end{lem}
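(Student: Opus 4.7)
The plan is to unwind the definition $\calK_\delta\bF = \rho_{\delta\chi}\star(S^\delta_{\Real^3}E_\Omega\bF)$ and combine two commuting properties: convolution with a smooth kernel commutes with differential operators, and the scaled extension $S^\delta_{\Real^3}E_\Omega$ essentially commutes with the curl up to the factor $\bar\delta^{-1}$ as already recorded in \eqref{Rot_S_E}.

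First I would observe that, since $\rho_{\delta\chi}\in \calC_0^\infty(\Real^3)$ and $S^\delta_{\Real^3}E_\Omega\bF\in\bL^1_{\mathrm{loc}}(\Real^3)$, the convolution $\rho_{\delta\chi}\star(S^\delta_{\Real^3}E_\Omega\bF)$ is smooth, and differentiation commutes with convolution componentwise, hence
\[
\ROT\calK_\delta\bF = \ROT\bigl(\rho_{\delta\chi}\star S^\delta_{\Real^3}E_\Omega\bF\bigr)
= \rho_{\delta\chi}\star \ROT\bigl(S^\delta_{\Real^3}E_\Omega\bF\bigr).
\]
Next I would invoke \eqref{Rot_S_E}, which applies because $\bF\in\Hzcurl$, to rewrite
\[
\ROT\bigl(S^\delta_{\Real^3}E_\Omega\bF\bigr) = \bar\delta^{-1}\, S^\delta_{\Real^3}E_\Omega(\ROT\bF).
\]
Substituting this identity yields
\[
\ROT\calK_\delta\bF = \bar\delta^{-1}\,\rho_{\delta\chi}\star S^\delta_{\Real^3}E_\Omega(\ROT\bF) = \bar\delta^{-1}\,\calK_\delta(\ROT\bF),
\]
where the last equality is simply the definition \eqref{approx_op} applied to the field $\ROT\bF\in\bL^2(\Omega)\subset\bL^1(\Omega)$. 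Multiplying by $\bar\delta$ gives \eqref{eq:commute_K}.

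There is no serious obstacle here; the only point worth mentioning is that the computation requires $\ROT\bF$ to belong to $\bL^1(\Omega)$ so that $\calK_\delta(\ROT\bF)$ is well defined by \eqref{approx_op}, which follows from $\bF\in\Hzcurl$ and the boundedness of $\Omega$. Note also that the boundary condition $\bF\CROSS\bn|_{\partial\Omega}=0$ is used crucially through \eqref{Rot_S_E} (itself relying on \eqref{RotE_ERot}); without it, the extension by zero would produce a surface contribution and the commutation would fail.
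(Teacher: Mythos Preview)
Your proof is correct and follows essentially the same argument as the paper: commute the curl through the convolution, apply \eqref{Rot_S_E}, and recognize the result as $\bar\delta^{-1}\calK_\delta(\ROT\bF)$. Your additional remarks on why $\calK_\delta(\ROT\bF)$ is well defined and on the role of the boundary condition are accurate and add useful context, but the core argument is identical to the paper's.
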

\begin{proof}
  Owing to the properties of the convolution, the following holds for
  any $\bF\in {\Hzcurl}$:
\begin{equation*}
  \ROT\calK_\delta\bF = \rho_{\delta\chi}\star\left(\ROT\left(S_{\Real^3}^\delta E_\Omega\bF\right)\right).
\end{equation*}
Applying~\eqref{Rot_S_E}, we infer that
\begin{align*}
\ROT\calK_\delta\bF &= \rho_{\delta\chi}\star\left(\bar\delta^{-1}S_{\Real^3}^\delta E_\Omega(\ROT\bF)\right) \\
&=\bar\delta^{-1}\rho_{\delta\chi}\star\left(S_{\Real^3}^\delta E_\Omega(\ROT\bF)\right)
 = \bar\delta^{-1}\calK_\delta(\ROT\bF).
\end{align*}
This completes the proof.
\end{proof}
\section{Finite Element Approximation of the boundary value problem}\label{s:ah}%
We introduce and study the stability properties of a Lagrange finite
element technique for solving the boundary value problem \eqref{bvp}.
\subsection{Finite Element Spaces}
\bal{We assume that the sub-domains $\Omega_i$, $i=1,\ldots, N$ are
  polyhedra.  Let $\{\calT_h\}_{h>0}$ be a shape regular sequence of
  affine meshes that we assume to be conforming in each sub-domain
  $\Omega_i$, \ie $\Sigma$ is partitioned by a set of interface cells.
  We additionally assume that either it is possible to extract from
  each mesh $\calT_h$ another one, say $\calG_h$, that is globally
  conforming and of equivalent typical mesh size (this assumption is
  obviously true if $\calT_h$ is globally conforming or if $\calT_h$ is obtained from
  $\calG_h$ after a few refinement step consisting of subdivisions),
  or each interface cell on one side of $\Sigma$ is the union of
  interface cells from the other side of $\Sigma$, the cardinal number
  of this union being a priori bounded by a fixed number.  An example
  of triangulation satisfying both geometric assumptions above is shown
  in Figure~\ref{Fig:ex_mesh}.
\begin{figure}[h]
\includegraphics[width=0.33\textwidth,clip]{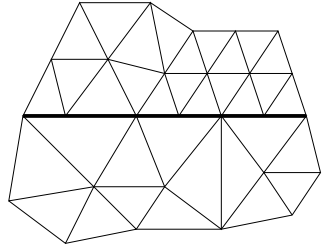}
\caption{Example of an admissible mesh. The interface $\Sigma$ is
  materialized by the thick line.}\label{Fig:ex_mesh}
\end{figure}
 We finally assume that the mesh
  sequence is quasi-uniform.  This assumption is non-essential and
  could be lifted by localizing some estimates; it is adopted here to
  simplify the presentation. The typical mesh size is denoted $h$.}
We introduce the following discrete space:
\begin{align}
  \bX_h &:= \left\{\bF\in \prod_{i=1}^N \pmb{\calC}^0(\bar\Omega_i),\ | \ 
    \forall K\in\calT_h,\;\bF_{|K}\in \pmb{\polP}_{\ell-1}\right\} \label{def_of_X_h}
\end{align}
where $\pmb{\polP}_{\ell-1}$ denotes the vector space of vector-valued
polynomial of total degree at most $\ell-1$, $\ell\ge 2$. Note that
the approximation space is non-conforming, \ie $\bX_h \not \subset
\Hzcurl$ and $\bX_h \not \subset \Hdiveps$. We assume that the mesh
sequence is such that there exists a family of local approximation operators
$\calC_h^l: \prod_{i=1}^N \bH^{\ell}(\Omega_i) \rightarrow \bX_h$
satisfying the following properties: there exists $c$ uniform in $h$
such that
\begin{align}
  \norme[\bH^{r}(\Omega_i)]{\calC_h^{l}\bF} 
&\le c\, \norme[\bH^{r}(\Omega_i)]{\bF}, && 0\le r < \tfrac32, \label{stab_Clement}\\
  \norme[\bH^t(\Omega_i)]{\calC_h^l\bF-\bF} &\le c\,
  h^{r-t}\norme[\bH^{r}(\Omega_i)]{\bF}, && 0\le t \le r\le \ell,\;
  t<\tfrac32, \label{approx_Clement}
\end{align}
for every $\bF\in \prod_{i=1}^N \bH^{\ell}(\Omega_i)$.  \bal{We
  introduce
  $\norme[\bL^2(\calT_h)]{.}^2:=\sum_{K\in \calT_h}
  \norme[\bL^2(K)]{.}^2$
  and
  $\norme[\bH^s(\calT_h)]{.}^2:=\sum_{K\in \calT_h}
  \norme[\bH^s(K)]{.}^2$.
  Owing to the quasi-uniformity assumption of the mesh sequence, we
  are going to regularly invoke various inverse inequalities like the following:
\begin{align} 
h  \norme[\bL^2(\calT_h)]{\ROT \kappa \ROT \bF_h} 
&\le c \norme[\bL^2(\calT_h)]{\kappa
    \ROT \bF_h},&& \forall \bF_h \in \bX_h\label{Inv_ineq1}\\
h^{s}\norme[\bH^s(\calT_h)]{\kappa \ROT\bF_h} &\leq c\norme[\bL^2(\calT_h)]{\kappa \ROT\bF_h},
&&\forall \bF_h \in \bX_h,\label{Inv_ineq2}\\
h^{\frac12} \norme[\bL^2(\Sigma\cup\front)]{\bF_h}
& \leq c \norme[\bL^2(\calT_h)]{\bF_h},&&\forall \bF_h \in \bX_h.\label{Inv_ineq3}
\end{align}
The assumptions adopted above for the mesh sequence imply that there
exists a family of discrete subspaces $\bY_h$ such that }
$\bY_h\subset \bX_h\cap \bH^1_0(\Omega)$ and a family of global
approximation operators
$\calC_h^g : \pmb{\calC}^\infty_0(\Omega) \longrightarrow \bY_h$ so
that
\begin{align}
  \norme[\bH^t(\Omega)]{\calC_h^g\bF-\bF} &\le c\,
  h^{r-t}\norme[\bH^{r}(\Omega)]{\bF}, && 0\le t \le r\le \ell,\;
  t<\tfrac32, \label{approx_Clement_smooth}
\end{align}
for every $\bF\in \pmb{\calC}^\infty_0(\Omega) $. 
 We additionally
introduce the scalar-valued discrete space
\begin{equation}
  M_h:=\left\{ q\in \xCzero(\bar\Omega),\ | \ 
    \forall K\in\calT_h,\; q_{|K}\in\polP_{\ell-1},\; q_{|\front}=0\right\} \subset \xHone_0(\Omega).
\end{equation}
Again, the approximation theory of finite elements
ensures that there exists an approximation operator $\calC_h^p:\xHone_0(\Omega)
\longrightarrow M_h$ satisfying the scalar counterparts of
\eqref{stab_Clement} and \eqref{approx_Clement} for all $q\in
\xHone_0(\Omega)\cap \xHn{l}(\Omega)$.
\begin{align}
  \norme[\xHn{l}(\Omega)]{\calC_h^p q} 
&\le c\norme[\xHn{l}(\Omega)]{q} && 0\le l\le \tfrac32 \label{scal_stab_Clement}\\
  \norme[\xHn{t}(\Omega)]{\calC_h^p q- q} &\le c
  h^{l-t}\norme[\xHn{l}(\Omega)]{q} && 0\le t \le l\le \ell,\;
  t<\tfrac32. \label{scalapprox_Clement}
\end{align}
\bal{Note that both $\bY_h$ and $M_h$ can be constructed either by
  invoking the existence of the mesh sequence $\{\calG_h\}_{h>0}$, or
  by constraining the possible hanging nodes on the interface
  $\Sigma$.}

We denote $\calF_h^i$ the set of the mesh interfaces: $F$ is an
interface if there are two elements in $\calT_h$, say $K_m$ and $K_n$
so that $F=K_m\cap K_n$ and $F$ is a $d-1$ manifold.  We denote
$\calF_h^\partial$ the set of the boundary faces: $F$ is a boundary
face if there is an element in $\calT_h$, say $K_m$ so that $F=K_m\cap
\front$ and $F$ is a $d-1$ manifold.  To simplify the notation we also
introduce $\calF_h:=\calF_h^i\cup \calF_h^\partial$.  For any mesh
interface $F\in \calF_h^i$, $F=K_m\cap K_n$, and any function $\bv$
whose restrictions over $K_m$ and $K_n$ are continuous, we define the
tangent and normal jump of $\bv$ across $F$ by
\begin{align}
\jump{\bv\times \bn}(\bx) &:= (\bv_{|K_m}\CROSS \bn_m)(\bx) +  (\bv_{|K_n}\CROSS \bn_n)(\bx),
\qquad \forall \bx \in F, \\
\jump{\bv\SCAL \bn}(\bx) &:= (\bv_{|K_m}\SCAL\bn_m)(\bx) +  (\bv_{|K_n}\SCAL \bn_n)(\bx),
\qquad \forall \bx \in F,
\end{align}  
where $\bn_l$ is the unit outer normal to $K_l$. 
The average of $\bv$ across across $F$ is defined by
\begin{equation}
\mean{\bv}(\bx) := \frac12\left(\bv_{|K_m}(\bx) +  \bv_{|K_n}(\bx)\right),
\qquad \forall \bx \in F.
\end{equation}
Whenever $F$ is a boundary face we set 
$\jump{\bv\CROSS \bn}(\bx) := \bv_{|K_m}\CROSS \bn_m(\bx)$,
$\jump{\bv\SCAL \bn}(\bx) := \bv_{|K_m}\SCAL \bn_m(\bx)$
and $\mean{\bv}(\bx) := \bv_{|K_m}(\bx)$.

\begin{rem}\label{rem:chf_smooth}
  Note that for any $\bF\in \pmb{\calC}^\infty_0(\Omega)$,
  $\calC_h^g\bF\in\bH_0^1(\Omega)$; in particular, we have $
  \jump{\calC_h^g\bF\CROSS\bn} = 0$ across all the interfaces in $\calF_h^i$.
\end{rem}

\subsection{Discrete formulation}
It will be useful to work with broken norms; for instance, we
introduce \bal{$\Omega_\Sigma:=\cup_{1\le i\le N}\Omega_i$ (recall
  that the domains $\Omega_i$ are open) together with} the following
notation:
\begin{align}
\|v\|_{\xHn{s}(\Omega_\Sigma)}^2&:=\sum_{i=1}^N \|v\|_{\xHn{s}(\Omega_i)}^2,\qquad
(v,w)_{\Omega_\Sigma}:= \sum_{i=1}^N \int_{\Omega_i} v w,\\
\|v\|_{\xLtwo(\Sigma\cup\front)}^2&:= \|v\|_{\xLtwo(\Sigma)}^2 +  \|v\|_{\xLtwo(\front)}^2,\qquad
(v,w)_{\Sigma\cup\front}:= \int_\Sigma v w + \int_\front v w.
\end{align}

We construct a discrete formulation of \eqref{bvp} by proceeding as in
\cite{BoGu2011}.  Let $\alpha \in [0,1]$ be a parameter yet to be
chosen.  We define the following bilinear form $a_h :\bX_h\CROSS
M_h\longrightarrow \Real$,
\begin{align}
  a_h((\bE_h,p_h)&,(\bF_h,q_h)) := \left(\kappa {\ROT
    \bE_h},{\ROT\bF_h}\right)_{\Omega_\Sigma} + \left(
  \mean{\kappa\ROT\bE_h}, \jump{\bF_h\CROSS \bn}
  \right)_{\Sigma\cup\front} \nonumber \\ &+ \theta \left(
  \mean{\kappa\ROT\bF_h}, \jump{\bE_h\CROSS
    \bn}\right)_{\Sigma\cup\front} + \gamma h^{-1} \left(
  \mean{\kappa} \jump{\bE_h\CROSS \bn}, \jump{\bF_h\CROSS
    \bn}\right)_{\Sigma\cup\front} \nonumber \\ &+\left(\eps\GRAD
  p_h,\bF_h\right)_\Omega - \left(\eps \bE_h,\GRAD q_h\right)_\Omega +
  c_\alpha \Big(h^{2\alpha}\left(\DIV(\eps \bE_h),
  \DIV(\eps\bF_h)\right)_{\Omega_\Sigma}\label{def_a_h} \\ &+
  h^{2(1-\alpha)}\left(\eps\GRAD p_h,\GRAD q_h\right)_\Omega +
  h^{(2\alpha-1)}\left(\jump{\eps\bE_h\SCAL\bn},\jump{\eps\bF_h\SCAL\bn}\right)_\Sigma\Big),
  \nonumber
\end{align}
where $\gamma$, $c_\alpha>0$, and $\theta\in\{-1,0,+1\}$ are
user-defined parameters.  We say that the formulation is
anti-symmetric, incomplete, or symmetric depending whether $\theta$ is
equal to $-1$, $0$, or $1$, respectively. The choice $\theta=1$
ensures the adjoint consistency of the method. The term proportional
to $\gamma$ enforces the weak continuity of the tangent component of
$\bE$.  The purpose of the term proportional to $c_\alpha$ is to
penalize $\DIV(\eps \bE_h)$ in $\bH^{-\alpha}(\Omega)$. The exponent
$\alpha$ is somewhat similar to the exponent that is used in
\cite{CosDau02} to define the $\xLtwo$-weighted space that controls
$\DIV(\eps \bE_h)$.

The discrete formulation considered in the rest of the paper consists
of looking for $(\bE_h,p_h)\in \bX_h{\times}M_h$ such that the
following holds for all $(\bF_h,q_h)\in\bX_h\CROSS M_h$:
\begin{equation}\label{e:discrete_formulation}
  a_h\left((\bE_h,p_h),(\bF_h,q_h)\right) 
  = \left(\eps\bg,\bF_h\right)_\Omega 
  + c_\alpha h^{2(1-\alpha)}\left(\eps\bg,\GRAD q_h\right)_\Omega,
\end{equation}
where $(\cdot,\cdot)_D$ henceforth denotes the scalar product in
$\xLtwo(D)$.

To perform the consistency analysis of the method we are led to introduce
\begin{align}
  \bZ^s(\Omega) =\{ \bF\in \Hzrcurl[\Omega][s]\st \ROT(\kappa\ROT\bF)\in \bL^2(\Omega),\
  \DIV(\eps\bF)\in \bL^2(\Omega)\}.
\end{align}
Owing to Theorem~\ref{Thm:Hs_stability_bvp}, it is a priori known that
there exists $s>0$ such that the solution to the boundary value
problem \eqref{bvp} is in $\bZ^s(\Omega){\cap\bH^s(\Omega)}$. We shall use the notation
$\bZ^s$ instead of $\bZ^s(\Omega)$ when {the context is unambiguous.}

\begin{prop} \label{Prop:extension_ah} Assuming \eqref{Hyp:eps_mu}, it
  is possible to extend the bilinear form $a_h(.,.)$ to
  $\left[(\bZ^s+\bX_h){\times}\xHone_0(\Omega)\right]^2$ for all $s>0$.
\end{prop}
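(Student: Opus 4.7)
The plan is to define the extension term-by-term by decomposing any element of $\bZ^s+\bX_h$ as $\bE=\bE_c+\bE_h$ with $\bE_c\in\bZ^s$ and $\bE_h\in\bX_h$ (and similarly $\bF=\bF_c+\bF_h$), and to interpret each face or boundary integral either as an ordinary $\bL^2$ product or as an $\bH^{-1/2}$--$\bH^{1/2}$ duality pairing on subdomain boundaries.

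The volume-type terms pose no difficulty. The term $(\kappa\ROT\bE,\ROT\bF)_{\Omega_\Sigma}$ is meaningful because $\ROT\bE_c\in\bH^s(\Omega)\subset\bL^2(\Omega)$ and $\ROT\bE_h$ is piecewise polynomial, while $\kappa\in\xLinfty(\Omega)$; the terms with $\GRAD p$, $\GRAD q$, and $\eps\bE$ are obvious; and the divergence penalty $h^{2\alpha}(\DIV(\eps\bE),\DIV(\eps\bF))_{\Omega_\Sigma}$ works because $\DIV(\eps\bE_c)\in\bL^2(\Omega)$ by definition of $\bZ^s$, while $\DIV(\eps\bE_h)$ is computed piecewise on $\Omega_\Sigma$ (where $\bE_h|_{\Omega_i}\in\bH^1(\Omega_i)$ and $\eps\in\mathrm{W}^{1,\infty}_\Sigma(\Omega)$). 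For the jump--jump penalties, the $\bZ^s$ contributions vanish identically: $\bE_c\in\Hzcurl$ gives $\jump{\bE_c\CROSS\bn}=0$ in $\bH^{-1/2}(\Sigma\cup\front)$, and $\eps\bE_c\in\Hdiv$ gives $\jump{\eps\bE_c\SCAL\bn}=0$ in $\bH^{-1/2}(\Sigma)$, so $\gamma h^{-1}(\mean{\kappa}\jump{\bE\CROSS\bn},\jump{\bF\CROSS\bn})_{\Sigma\cup\front}$ and $h^{2\alpha-1}(\jump{\eps\bE\SCAL\bn},\jump{\eps\bF\SCAL\bn})_\Sigma$ retain only the piecewise-polynomial $\bL^2$ contributions from $\bE_h$ and $\bF_h$.

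The delicate terms are the consistency couplings $(\mean{\kappa\ROT\bE},\jump{\bF\CROSS\bn})_{\Sigma\cup\front}$ and its $\theta$-twin. Using $\jump{\bF_c\CROSS\bn}=0$, these reduce to pairings with $\jump{\bF_h\CROSS\bn}$, which is piecewise polynomial on each face. When $\bE=\bE_h$ the result is just an $\bL^2$ face pairing; for the $\bE_c$ contribution, the defining condition $\ROT(\kappa\ROT\bE_c)\in\bL^2(\Omega)$ gives $\kappa\ROT\bE_c\in\Hcurl$, hence a tangential trace $(\kappa\ROT\bE_c)\CROSS\bn_i\in\bH^{-1/2}(\partial\Omega_i)$ on each Lipschitz subdomain, while $\bF_h|_{\Omega_i}\in\bH^1(\Omega_i)$ has a trace in $\bH^{1/2}(\partial\Omega_i)$. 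Define
\[
(\mean{\kappa\ROT\bE_c},\jump{\bF_h\CROSS\bn})_{\Sigma\cup\front}:=\sum_{i=1}^N\langle(\kappa\ROT\bE_c)\CROSS\bn_i,\bF_h\rangle_{\partial\Omega_i},
\]
which by the $\Hcurl[\Omega_i]$ Green formula equals $(\ROT(\kappa\ROT\bE_c),\bF_h)_\Omega-(\kappa\ROT\bE_c,\ROT\bF_h)_{\Omega_\Sigma}$. This alternative $\bL^2$-form makes the extension manifestly independent of the non-unique decomposition $\bE=\bE_c+\bE_h$: two choices differ by an element of $\bZ^s\cap\bX_h$, on which the pointwise and duality prescriptions coincide by classical integration by parts. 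The symmetric $\theta$-coupling is treated identically by swapping the roles of $\bE$ and $\bF$. The main (and only real) obstacle is precisely this $\bE_c$ part, where one must replace a naive integral of $\mean{\kappa\ROT\bE_c}$ against a rough boundary value by a bona fide $\bH^{-1/2}$--$\bH^{1/2}$ duality pairing; membership of $\bE_c$ in $\bZ^s$ supplies exactly the $\Hcurl$ regularity of $\kappa\ROT\bE_c$ required for this pairing to be legitimate.
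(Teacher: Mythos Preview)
Your argument is correct, but your treatment of the consistency terms $(\mean{\kappa\ROT\bE_c},\jump{\bF_h\CROSS\bn})_{\Sigma\cup\front}$ differs from the paper's. You exploit that $\kappa\ROT\bE_c\in\Hcurl$ (from $\ROT(\kappa\ROT\bE_c)\in\bL^2(\Omega)$), take its tangential trace in $\bH^{-1/2}(\partial\Omega_i)$, and pair it against $\bF_h|_{\Omega_i}\in\bH^1(\Omega_i)$; summing the resulting $\Hcurl$ Green formulas over the subdomains yields the clean expression $(\ROT(\kappa\ROT\bE_c),\bF_h)_\Omega-(\kappa\ROT\bE_c,\ROT\bF_h)_{\Omega_\Sigma}$. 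This is elegant and in fact never uses the hypothesis $s>0$: only the $\Hcurl$ structure of $\kappa\ROT\bE_c$ is invoked. You also explicitly check independence of the splitting $\bE=\bE_c+\bE_h$, which the paper leaves implicit.

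The paper instead proves a face-localized estimate (Lemma~\ref{l:cont_IP}): for $\bphi\in\bH^\sigma(K)\cap\Hcurl[K]$ with $\sigma\in(0,\tfrac12)$ and $\bv$ a polynomial, the pairing $\int_F(\bv\CROSS\bn)\SCAL\bphi$ is bounded by $h_F^{-1/2}\|\bv\|_{\bL^2(F)}$ times local quantities involving $\|\bphi\|_{\bH^\sigma(K)}$ and $\|\ROT\bphi\|_{\bL^2(K)}$. Here the regularity $\kappa\ROT\bE\in\bH^\sigma(\Omega)$, which genuinely requires $s>0$ together with $\kappa\in\mathrm{W}^{1,\infty}_\Sigma$, is used. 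The paper does mention in a remark that one can bypass Lemma~\ref{l:cont_IP} by invoking the Buffa--Ciarlet face duality, which is close in spirit to your route.

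The trade-off: your approach is more direct for the mere \emph{existence} of the extension, but it gives only a global pairing on each $\partial\Omega_i$. The paper's Lemma~\ref{l:cont_IP} delivers quantitative face-by-face control, and this is exactly the tool reused in the continuity estimates (Propositions~\ref{p:cont} and~\ref{prop:cont_bis}) and in Lemma~\ref{lem:approx_b_lambda}. So while your argument suffices for this proposition, the paper's choice is motivated by the downstream convergence analysis.
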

\begin{proof}
  Note first that $M_h\subset \xHone_0(\Omega)$ and the extension of the bilinear
  form to scalar fields in $\xHone_0(\Omega)$ does not pose any difficulty.  We
  decompose $a_h$ into three pieces:
\begin{align*}
  a_{0h}((\bE_h,p_h)&,(\bF_h,q_h)) := \left( \mean{\kappa\ROT\bE_h},
    \jump{\bF_h\CROSS \bn} \right)_{\Sigma\cup\front}+ \theta \left(
    \mean{\kappa\ROT\bF_h},
    \jump{\bE_h\CROSS \bn}\right)_{\Sigma\cup\front}
  \\
  a_{1h}((\bE_h,p_h)&,(\bF_h,q_h)) := \left(\kappa {\ROT
      \bE_h},{\ROT\bF_h}\right)_{\Omega_\Sigma} + 
  \gamma h^{-1} \left( \mean{\kappa} \jump{\bE_h\CROSS \bn},
    \jump{\bF_h\CROSS \bn}\right)_{\Sigma\cup\front} \\
  a_{2h}((\bE_h,p_h)&,(\bF_h,q_h)) := c_\alpha
  \Big(h^{2\alpha}\left(\DIV(\eps \bE_h),
    \DIV(\eps\bF_h)\right)_{\Omega_\Sigma}+
  h^{2(1-\alpha)}\left(\eps\GRAD p_h,\GRAD q_h\right)_\Omega
  \\
  & +
  h^{(2\alpha-1)}\left(\jump{\eps\bE_h\SCAL\bn},\jump{\eps\bF_h\SCAL\bn}\right)_\Sigma\Big)
  + \left(\eps\GRAD p_h,\bF_h\right)_\Omega - \left(\eps \bE_h,\GRAD
    q_h\right)_\Omega.
\end{align*}
The bilinear form $a_{1h}$ can clearly be extended to
$\left[(\bZ^s+\bX_h){\times}\xHone_0(\Omega)\right]^2$, since every function $\bE$ in
$\bZ^s$ is such that $\jump{\bE\CROSS \bn}_{\Sigma\cup\front}$ is
zero. Hence, if either $(\bE,\bF)\in \bZ^s{\times}(\bZ^s+\bX_h)$  or 
$(\bE,\bF)\in (\bZ^s+\bX_h){\times}\bZ^s$, we set
\[
a_{1h}((\bE,p),(\bF,q)) := \left(\kappa {\ROT
    \bE},{\ROT\bF}\right)_{\Omega_\Sigma},
\]
for all $(p,q)\in \xHone_0(\Omega)$.  The bilinear form $a_{2h}$ can also be
extended to $\left[(\bZ^s+\bX_h){\times}\xHone_0(\Omega)\right]^2$, since every
function $\bE$ in $\bZ^s$ is such that $\jump{\eps\bE\SCAL
  \bn}_{{\Sigma}}$ is zero. Hence, if either $(\bE,\bF)\in
\bZ^s{\times}(\bZ^s+\bX_h)$ or $(\bE,\bF)\in
(\bZ^s+\bX_h){\times}\bZ^s$, we set
\begin{align*}
a_{2h}((\bE,p),(\bF,q)) &:= c_\alpha
  \Big(h^{2\alpha}\left(\DIV(\eps \bE),
    \DIV(\eps\bF)\right)_{\Omega_\Sigma}+
  h^{2(1-\alpha)}\left(\eps\GRAD p,\GRAD q\right)_\Omega\Big)
  \\
  & + \left(\eps\GRAD p,\bF\right)_\Omega - \left(\eps \bE,\GRAD
    q\right)_\Omega.
\end{align*}
for all $(p,q)\in \xHone_0(\Omega)$.  

The question of the extension of $a_{0h}$ is more subtle, and we must
now distinguish the trial and test spaces. We are going to use
Lemma~\ref{l:cont_IP} \bal{ to show that the bilinear form
  $(\bH^s(\Omega)\cap\Hcurl){\times}\bX_h\ni (\bphi,\bF_h) \longmapsto
  (\int_{F} \bphi\SCAL(\bF_{h|K_m}\CROSS\bn_m), \int_{F}
  \bphi\SCAL(\bF_{h|K_n}\CROSS\bn_n))\in \Real^2$
  is well defined for all $F=K_m\cap K_n\in \calF_h^i$, with the
  obvious equivalent statement if $F\in\calF_h^\partial$.} Let $\bE$
be a member of $\bZ^s$, then $\ROT\bE\in \bH^s(\Omega)$, $s>0$ and in
particular, $\ROT\bE \in \bH^\sigma(\Omega)$ for some
$\sigma \in (0,\frac12)$.  Owing to \eqref{Hyp:eps_mu},
{$\kappa \in \mathrm{W}_\Sigma^{1,\infty}(\Omega)$ so that
  $\kappa\ROT\bE\in \bH^{\sigma}(\Omega)$}, see \eg
\cite{Bo_Gu_Lu_2012}. Note in addition that $\bE$ being a member of
$\bZ^s$ implies that $\ROT(\kappa\ROT\bE)\in\bL^2(\Omega)$, which in
turn also implies that
$\mean{\kappa\ROT\bE}_{|\Sigma} =\kappa\ROT\bE_{|\Sigma}$. Hence,
Lemma~\ref{l:cont_IP} \bal{shows that the expressions
  $\int_{F} \kappa\ROT\bE\SCAL(\bF_{h|K_m}\CROSS\bn_m)$,
  $\int_{F} \kappa\ROT\bE\SCAL(\bF_{h|K_n}\CROSS\bn_n) $ are
  meaningful} for all $F\in \calF_h$ and for all
$(\bE,\bF_h)\in \bZ^s{\times}\bX_h$.  The extension of $a_{0h}$ for
$(\bE_h,\bF)\in \bX_h{\times}\bZ^s$ is justified similarly.  The
extension of $a_{0h}$ for $(\bE,\bF)\in \bZ^s{\times}\bZ^s$ is trivial
since the tangent jumps of $\bE$ and $\bF$ across $F$ are zero.
Summing up, $a_{0h}$ can be extended to
$\left[(\bZ^s+\bX_h){\times}\xHone_0(\Omega)\right]^2$ by setting
\begin{multline*}
  a_{0h}((\bE+\bE_h,p),(\bF+\bF_h,q)) := \left(\kappa\ROT\bE,
    \jump{\bF_h\CROSS \bn} \right)_{\Sigma\cup\front} +\left(
    \mean{\kappa\ROT\bE_h}, \jump{\bF_h\CROSS \bn}
  \right)_{\Sigma\cup\front} \\+ \theta \left( \kappa\ROT\bF,
    \jump{\bE_h\CROSS \bn}\right)_{\Sigma\cup\front} + \theta \left(
    \mean{\kappa\ROT\bF_h}, \jump{\bE_h\CROSS
      \bn}\right)_{\Sigma\cup\front},
\end{multline*}
for all $(\bE,\bE_h)\in \bZ^s{\times}\bX_h$, all $(\bF,\bF_h)\in
\bZ^s{\times}\bX_h$, and all $(p,q)\in \xHone_0(\Omega)$.
This ends the proof.
\end{proof}

\begin{remark}
One could avoid invoking Lemma~\ref{l:cont_IP} in the above proof by
using instead a result from \cite{BufCiaJr01I} where it is shown that the
bilinear form $\Hcurl{\times}\Hcurl\ni (\bphi,\bF) \longmapsto
\int_{F} \bphi\SCAL(\bF\CROSS\bn) \in \Real$ is well defined and
continuous for all $F\in \calF_h$
\end{remark}
\begin{rem}[Continuous Approximation of $p$]
  Observe that the approximation of the Lagrange multiplier $p$ is
  globally continuous.  This is critical to derive a global control of
  $\DIV (\eps \bE)$ in $\bH^{-\alpha}(\Omega)$ {(encoded in the bilinear form $a_{2h}$ in the above proof)} instead of
  $\prod_{i=1}^N\bH^{-\alpha}(\Omega_i)$. {We refer to \cite{BoGu2011} for more precisions.}
\end{rem}

\begin{lem} \label{Lem:consistency}
Assume~\eqref{Hyp:eps_mu} and let $(\bE,p)$ be the solution of
\eqref{bvp}. Let $s>0$ be such that $\bE\in\bZ^s$. The following holds
for any $(\bF+\bF_h,q)\in(\bZ^s+\bX_h)\CROSS \xHone_0(\Omega)$:
\begin{equation*}
a_h\left((\bE,p),(\bF+\bF_h,q)\right) = \left(\eps\bg,\bF+\bF_h\right)_\Omega 
+ c_\alpha h^{2(1-\alpha)}\left(\eps\bg,\GRAD q\right)_\Omega.
\end{equation*}
\end{lem}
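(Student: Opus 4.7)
My plan is to substitute the exact solution $(\bE,p)$ into the extended bilinear form $a_h$ constructed in Proposition~\ref{Prop:extension_ah}, exploit the regularity encoded by $\bE\in\bZ^s$ to kill the jump/penalty terms, and reduce the curl--curl term to a volume integral via element-wise integration by parts. The crucial facts used repeatedly are that $\bE\in\bZ^s$ gives: (i) $\jump{\bE\CROSS\bn}=0$ across $\Sigma\cup\front$ (since $\bE\in\Hzcurl$ globally), (ii) $\jump{\eps\bE\SCAL\bn}=0$ across $\Sigma$ and $\DIV(\eps\bE)=0$ in each $\Omega_i$, and (iii) $\ROT(\kappa\ROT\bE)\in\bL^2(\Omega)$, which means $\kappa\ROT\bE\in\Hcurl[\Omega]$ globally so that $\jump{(\kappa\ROT\bE)\CROSS\bn}=0$ across interior faces, including those on $\Sigma$.

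The first step is to discard all the terms in $a_h((\bE,p),(\bF+\bF_h,q))$ that annihilate $\jump{\bE\CROSS\bn}$, $\jump{\eps\bE\SCAL\bn}$ or $\DIV(\eps\bE)$: namely the $\theta$-symmetrizer, the $\gamma h^{-1}$ penalty, the $c_\alpha h^{2\alpha}$ divergence penalty, and the $c_\alpha h^{2\alpha-1}$ normal-jump penalty. Next I would integrate by parts the curl--curl term element by element. Assembling the resulting face integrals, using $\jump{(\kappa\ROT\bE)\CROSS\bn}=0$ across every interior face and the definition of $\mean{\cdot}$ on $\front$, yields
\begin{equation*}
(\kappa\ROT\bE,\ROT(\bF+\bF_h))_{\Omega_\Sigma}
 =(\ROT(\kappa\ROT\bE),\bF+\bF_h)_\Omega
 -\bigl(\mean{\kappa\ROT\bE},\jump{(\bF+\bF_h)\CROSS\bn}\bigr)_{\Sigma\cup\front}.
\end{equation*}
The second piece exactly cancels the consistency term already present in $a_h$, leaving only the volume contribution $(\ROT(\kappa\ROT\bE),\bF+\bF_h)_\Omega$. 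This is the step where I expect the main technical care to be needed: the face integrals have to be interpreted through Lemma~\ref{l:cont_IP} since $\kappa\ROT\bE$ only lies in a fractional Sobolev space, but Proposition~\ref{Prop:extension_ah} has already ensured their meaningfulness.

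Once the curl--curl term has been reduced, I would invoke the first equation of \eqref{bvp} to rewrite $\ROT(\kappa\ROT\bE)+\eps\GRAD p=\eps\bg$, producing
\begin{equation*}
(\ROT(\kappa\ROT\bE),\bF+\bF_h)_\Omega+(\eps\GRAD p,\bF+\bF_h)_\Omega=(\eps\bg,\bF+\bF_h)_\Omega.
\end{equation*}
For the term $-(\eps\bE,\GRAD q)_\Omega$, I would integrate by parts using $q\in H^1_0(\Omega)$ and $\jump{\eps\bE\SCAL\bn}=0$ across $\Sigma$ to obtain $(\DIV(\eps\bE),q)_{\Omega_\Sigma}=0$. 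Finally, for the last $c_\alpha h^{2(1-\alpha)}(\eps\GRAD p,\GRAD q)_\Omega$ term, applying $\DIV$ to the first equation of \eqref{bvp} gives $\DIV(\eps\GRAD p)=\DIV(\eps\bg)$, so integration by parts (using $q_{|\front}=0$) yields $(\eps\GRAD p,\GRAD q)_\Omega=(\eps\bg,\GRAD q)_\Omega$. Summing the contributions produces exactly the right-hand side in the statement, completing the proof.
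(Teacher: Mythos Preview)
Your proof is correct and follows essentially the same strategy as the paper: drop the penalty/jump terms using $\bE\in\bZ^s$, integrate by parts to convert the curl--curl term into $(\ROT(\kappa\ROT\bE),\bF+\bF_h)_\Omega$, and handle the pressure term via the PDE. The only organizational difference is that the paper starts from \eqref{bvp}, tests with $\bF+\bF_h$, and integrates by parts globally over $\Omega$ for $\bF$ and over each subdomain $\Omega_i$ for $\bF_h$ (thereby avoiding element-level face integrals for the non-polynomial $\bF$), whereas you go element by element; both routes lead to the same identity.
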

\begin{proof}
Let us first observe that 
\begin{multline*}
a_h\left((\bE,p),(\bF+\bF_h,q)\right) = (\kappa\ROT \bE,\ROT (\bF+\bF_h))_{\Omega_\Sigma}
+ (\kappa\ROT\bE,\jump{\bF_h\CROSS\bn})_{\Sigma\cup \front} \\
+ (\eps\GRAD p,\bF+\bF_h)_\Omega + c_\alpha h^{2(1-\alpha)} (\eps\GRAD p, \GRAD q)_\Omega,
\end{multline*}
where all the terms make sense owing to the extension properties of
$a_h$ stated in Proposition~\ref{Prop:extension_ah}.  We now test
\eqref{bvp} with $\bF+\bF_h\in (\bZ^s+\bX_h)$,
\[
(\ROT(\kappa\ROT \bE),\bF)_\Omega + \sum_{i=1}^N (\ROT(\kappa\ROT \bE),\bF_h)_{\Omega_i} 
+ (\eps\GRAD p, \bF+\bF_h)_\Omega = (\eps \bg,\bF+\bF_h)_\Omega,
\]
and we perform the integration by parts over $\Omega$ when the test
function is $\bF$ and over each sub-domain when the test function is
$\bF_h$,
\begin{multline*}
(\kappa\ROT \bE,\ROT \bF)_\Omega + \sum_{i=1}^N (\kappa\ROT \bE,\ROT\bF_h)_{\Omega_i}
+ ( \kappa\ROT \bE,\jump{\bF_h\CROSS\bn})_{\Sigma\cup \front} + (\eps\GRAD p, \bF+\bF_h)_\Omega = (\eps \bg,\bF+\bF_h)_\Omega.
\end{multline*}
Note that the term $( \kappa\ROT
\bE,\jump{\bF_h\CROSS\bn})_{\Sigma\cup \front}$ is meaningful owing to
Lemma~\ref{l:cont_IP} and $\bE$ being a member of $\bZ^s$. This
implies that
\[
a_h\left((\bE,p),(\bF+\bF_h,q)\right) = (\eps \bg,\bF+\bF_h)_\Omega 
+ c_\alpha h^{2(1-\alpha)} (\eps\GRAD p, \GRAD q)_\Omega.
\]
Upon testing again \eqref{bvp} with $\GRAD q$, $q\in \xHone_0(\Omega)$, we infer
that $(\eps\GRAD p, \GRAD q)_\Omega = (\eps\bg, \GRAD q)_\Omega$,
which in turn implies the desired result.
\end{proof}

\subsection{Well posedness of the discrete formulation}
We discuss in this section the existence and uniqueness of a solution
$(\bE_h,p_h)$ to \eqref{e:discrete_formulation}.  This issue is
addressed by equipping $\bX_h \CROSS M_h$ with the following discrete norm:
\begin{equation}\label{discrete_norm} 
\begin{aligned}
  \norme[h]{\bF_h,q_h}^2 := &
  \norme[\bL^2(\Omega_\Sigma)]{\kappa^{\frac12}{\ROT\bF_h}}^2 + \gamma
  h^{-1} \norme[\xLtwo(\Sigma \cup \front)]{\mean{\kappa}^{\frac12} \jump{
      \bF_h \times \bn}}^2 \\ & + c_\alpha \Big(
  h^{2\alpha}\norme[\xLtwo(\Omega_\Sigma)]{\DIV(\eps\bF_h)}^2 +
  h^{2(1-\alpha)}\norme[\bL^2(\Omega)]{\eps^{\frac12}\GRAD q_h}^2\\ &
  +
  h^{(2\alpha-1)}\norme[\bL^2(\Sigma)]{\jump{\eps\bF_h\SCAL\bn}}^2\Big),
\end{aligned}
\end{equation}
by proving a coercivity property, uniform in $h$, and by establishing
some continuity estimates for the bilinear form $a_h(.,.)$.
\bal{Notice that we do not include the $L^2$-norm in the discrete
  norm since this quantity is better handled by a duality argument.
  We postpone this discussion to Section~\ref{sec:L2}.}

We first establish the coercivity of $a_h$.
\begin{prop}[Coercivity]
If $\theta\in \{0,1\}$, there exists $\gamma_0>0$ and $c(\gamma_0)>0$, 
uniform with respect to $h$, so that the following holds 
for all $\gamma\ge \gamma_0$ and for any $0\leq \alpha \leq 1$:
\begin{equation}
a_h((\bE_h,p_h),(\bE_h,p_h)) \ge c(\gamma_0)  \norme[h]{\bE_h,p_h}^2,\qquad
\forall (\bE_h,p_h)\in \bX_h\CROSS M_h,
\end{equation}
and this inequality holds for all $\gamma>0$ with $c(\gamma_0)=1$ if $\theta=-1$. 
\end{prop}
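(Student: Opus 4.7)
The plan is to test the bilinear form on the diagonal, observe that the two terms pairing $p_h$ with $\bE_h$ cancel, and then use a discrete trace inequality to absorb the remaining consistency-type boundary term into the curl and jump penalty contributions, provided $\gamma$ is large enough.

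\textbf{Step 1 (diagonal evaluation).} I set $(\bF_h,q_h)=(\bE_h,p_h)$ in the definition \eqref{def_a_h}. The terms $(\eps\GRAD p_h,\bE_h)_\Omega$ and $-(\eps\bE_h,\GRAD p_h)_\Omega$ cancel. All the symmetric ``squared'' contributions recombine exactly into $\norme[h]{\bE_h,p_h}^2$, and the remaining piece is the consistency/adjoint-consistency cross term
\begin{equation*}
(1+\theta)\bigl(\mean{\kappa\ROT\bE_h},\jump{\bE_h\times\bn}\bigr)_{\Sigma\cup\front}.
\end{equation*}
Hence
\begin{equation*}
a_h((\bE_h,p_h),(\bE_h,p_h)) = \norme[h]{\bE_h,p_h}^2 + (1+\theta)\bigl(\mean{\kappa\ROT\bE_h},\jump{\bE_h\times\bn}\bigr)_{\Sigma\cup\front}.
\end{equation*}

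\textbf{Step 2 (the easy case $\theta=-1$).} Here $1+\theta=0$, so the cross term vanishes identically and coercivity holds with constant $1$ for any $\gamma>0$, as claimed.

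\textbf{Step 3 (the case $\theta\in\{0,1\}$).} I bound the cross term by Cauchy--Schwarz on $\Sigma\cup\front$ after inserting $h^{1/2}\mean{\kappa}^{-1/2}\cdot h^{-1/2}\mean{\kappa}^{1/2}$:
\begin{equation*}
\bigl|(\mean{\kappa\ROT\bE_h},\jump{\bE_h\times\bn})_{\Sigma\cup\front}\bigr|
\le \bigl\|h^{1/2}\mean{\kappa}^{-1/2}\mean{\kappa\ROT\bE_h}\bigr\|_{\xLtwo(\Sigma\cup\front)}
\bigl\|h^{-1/2}\mean{\kappa}^{1/2}\jump{\bE_h\times\bn}\bigr\|_{\xLtwo(\Sigma\cup\front)}.
\end{equation*}
The second factor is exactly $\gamma^{-1/2}$ times the jump penalty contribution to $\norme[h]{\bE_h,p_h}$. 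For the first factor, I apply the discrete trace inequality \eqref{Inv_ineq3} face-by-face to the piecewise polynomial $\ROT\bE_h$; using $\eps_{\min}$ and $\kappa_{\min}$ from \eqref{Hyp:eps_mu} together with the boundedness of $\kappa$ to absorb the ratios of $\kappa$ values on adjacent cells, I get a constant $C_\star$ depending on $\kappa$ (and the shape regularity) such that
\begin{equation*}
\bigl\|h^{1/2}\mean{\kappa}^{-1/2}\mean{\kappa\ROT\bE_h}\bigr\|_{\xLtwo(\Sigma\cup\front)} \le C_\star \,\bigl\|\kappa^{1/2}\ROT\bE_h\bigr\|_{\bL^2(\Omega_\Sigma)}.
\end{equation*}

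\textbf{Step 4 (Young and choice of $\gamma_0$).} Applying Young's inequality with a parameter $\eta>0$ yields
\begin{equation*}
(1+\theta)\bigl|(\mean{\kappa\ROT\bE_h},\jump{\bE_h\times\bn})_{\Sigma\cup\front}\bigr|
\le \tfrac{\eta}{2}\|\kappa^{1/2}\ROT\bE_h\|_{\bL^2(\Omega_\Sigma)}^2 + \tfrac{2C_\star^2}{\eta}\,h^{-1}\|\mean{\kappa}^{1/2}\jump{\bE_h\times\bn}\|_{\xLtwo(\Sigma\cup\front)}^2.
\end{equation*}
Choosing $\eta=\tfrac12$ and then $\gamma_0:=8C_\star^2+1$ ensures that for every $\gamma\ge\gamma_0$ both the curl and jump contributions in $\norme[h]{\bE_h,p_h}^2$ remain bounded below by a positive constant $c(\gamma_0)$ times themselves, while all the other contributions ($\DIV(\eps\bE_h)$, $\GRAD p_h$, $\jump{\eps\bE_h\cdot\bn}$) are untouched. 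Collecting terms gives
\begin{equation*}
a_h((\bE_h,p_h),(\bE_h,p_h)) \ge c(\gamma_0)\,\norme[h]{\bE_h,p_h}^2,
\end{equation*}
uniformly in $h$ and for every $\alpha\in[0,1]$.

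The main obstacle is Step 3: controlling $\mean{\kappa\ROT\bE_h}$ on $\Sigma\cup\front$ by the volumetric weighted $L^2$-norm of $\ROT\bE_h$, uniformly in $h$, despite $\kappa$ being discontinuous across $\Sigma$. This is where the discrete trace inequality on each element combines with the assumption $\kappa\ge\kappa_{\min}$ and $\kappa\in W^{1,\infty}_\Sigma(\Omega)$ to produce a mesh-independent constant.
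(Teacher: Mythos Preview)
Your proof is correct and follows essentially the same route as the paper: evaluate $a_h$ on the diagonal, observe the cancellation of the mixed $p_h$--$\bE_h$ terms, and then use the discrete trace inequality \eqref{Inv_ineq3} together with Young's inequality to absorb the cross term $(1+\theta)(\mean{\kappa\ROT\bE_h},\jump{\bE_h\times\bn})_{\Sigma\cup\front}$ into the curl and jump-penalty contributions for $\gamma$ large enough. The paper's version is just terser, invoking the specific Young inequality $ab\le\tfrac14 a^2+b^2$ directly and setting $\gamma_0=4c_0$, but the argument is the same.
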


\begin{proof}
We first observe that
\[
a_h((\bE_h,p_h),(\bE_h,p_h)) = \norme[h]{\bE_h,p_h}^2 + (1+\theta)
\left( \mean{\kappa\ROT\bE_h},\jump{\bE_h\CROSS \bn}
\right)_{\Sigma\cup \front}.
\]
The conclusion is evident if $\theta=-1$. Otherwise we have to control
the term $\left( \mean{\kappa\ROT\bE_h},\jump{\bE_h\CROSS \bn}
\right)_{\Sigma\cup \front}$.
Invoking \bal{the inverse trace inequality \eqref{Inv_ineq3} and the inequality $ab \le \frac14 a^2 + b^2$, we 
deduce that there exists a constant $c_0$ only depending on the trace inequality constant and the ratio $\kappa_{\max}/\kappa_{\min}$  such that}
\[
\big|\left( \mean{\kappa\ROT\bF_h},\jump{\bF_h\CROSS \bn}
  \right)_{\Sigma\cup \front} \big| \leq \frac14
  \norme[\xLtwo(\Omega_\Sigma)]{\kappa^{\frac12}\ROT \bF_h}^2 +c_0 h^{-1} \norme[\xLtwo(\Sigma\cup
  \front)]{ \mean{\kappa}^{\frac12}\jump{ \bF_h \CROSS \bn}}^2.
\]
Hence, if $\gamma \ge \gamma_0:=4 c_0$, we infer that the
following holds:
\begin{equation}\label{r:positive}
  a_h\left((\bE_h,p_h),(\bE_h,p_h)\right) 
  \geq  \frac12
  \norme[h]{\bE_h,p_h}^2\geq 0.
\end{equation}
This completes the proof.
\end{proof}

We now establish the uniform boundedness of the bilinear form $a_h$.
\begin{prop}[Continuity]\label{p:cont}
  For any $s\in\left(0,\frac12\right)$, there is $c>0$, uniform in $h$
  such that the following holds for any $0\leq \alpha \leq 1$ and for
  every $(\bE,p)\in \bZ^{s} \CROSS \xHone_0(\Omega)$ and $(\bG_h,d_h),
  (\bF_h,q_h) \in\bX_h\CROSS M_h$:
\begin{align}
\label{cont}
c\;\frac{\bal{|a_h\left((\bE-\bG_h,p-d_h),(\bF_h,q_h)\right)|}}{\norme[h]{\bF_h,q_h}}
& \le \ba{\norme[h]{\bE-\bG_h,p-d_h}} + h^{\alpha-1}\norme[\bL^2(\Omega)]{\bE-\bG_h} \nonumber
\\&\hspace{-3cm}
+h^{{s}}\norme[\bH^{{s}}(\calT_h)]{\kappa \ROT (\bE-\bG_h)}
+ h \norme[\bL^2(\calT_h)]{\ROT \kappa \ROT (\bE-\bG_h)}\\
&\hspace{-3cm}+h^{-\alpha}\norme[\xLtwo(\Omega)]{p-d_h} +
h^{(\frac12-\alpha)}\norme[\xLtwo(\Sigma)]{p-d_h}.\nonumber
\end{align}
\end{prop}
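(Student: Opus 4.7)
My plan is to estimate $a_h((\bE - \bG_h, p - d_h), (\bF_h, q_h))$ by bounding each of the nine summands in the definition \eqref{def_a_h} of $a_h$, using Cauchy--Schwarz, piecewise integration by parts, and appropriate trace/inverse-trace inequalities, in such a way that one factor of each bound fits into $\norme[h]{\bF_h, q_h}$ while the other produces one of the summands on the right-hand side of \eqref{cont}. After summing and dividing by $\norme[h]{\bF_h, q_h}$, the inequality follows. Five of the nine contributions---$(\kappa\ROT\cdot, \ROT\cdot)_{\Omega_\Sigma}$ and the four stabilization terms proportional to $\gamma h^{-1}$, $c_\alpha h^{2\alpha}$, $c_\alpha h^{2(1-\alpha)}$ and $c_\alpha h^{2\alpha-1}$---are handled by direct Cauchy--Schwarz, each factor being controlled by the discrete norm of the corresponding argument. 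For the normal-jump penalty on $\Sigma$ I shall exploit $\jump{\eps\bE\SCAL\bn}_\Sigma = 0$ (since $\bE \in \bZ^s$), so that $\jump{\eps(\bE - \bG_h)\SCAL\bn}$ reduces to $-\jump{\eps\bG_h\SCAL\bn}$, which is already encoded in $\norme[h]{\bE - \bG_h, p - d_h}$.

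Next come the two mixed pressure--velocity couplings. For $(\eps(\bE - \bG_h), \GRAD q_h)_\Omega$ I would just apply Cauchy--Schwarz and insert the compensating factor $h^{1-\alpha}$, producing the summand $h^{\alpha-1}\norme[\bL^2(\Omega)]{\bE - \bG_h}$. For $(\eps\GRAD(p - d_h), \bF_h)_\Omega$ the strategy is less immediate because $\norme[h]{\bF_h, q_h}$ does not control $\norme[\bL^2(\Omega)]{\bF_h}$; I shall integrate by parts piecewise on each $\Omega_i$, using $p - d_h \in \xHone_0(\Omega)$ to eliminate the contribution on $\front$, yielding
\begin{equation*}
(\eps\GRAD(p - d_h), \bF_h)_\Omega = -(\DIV(\eps\bF_h), p - d_h)_{\Omega_\Sigma} + (p - d_h, \jump{\eps\bF_h\SCAL\bn})_\Sigma.
\end{equation*}
Cauchy--Schwarz together with compensating powers of $h$ then produces the summands $h^{-\alpha}\norme[\xLtwo(\Omega)]{p - d_h}$ and $h^{\frac12-\alpha}\norme[\xLtwo(\Sigma)]{p - d_h}$, paired respectively with the $c_\alpha h^{2\alpha}$-divergence and $c_\alpha h^{2\alpha-1}$-normal-jump parts of $\norme[h]{\bF_h, q_h}$.

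The main obstacle will be the two interface-and-boundary contributions originating from $a_{0h}$, namely $(\mean{\kappa\ROT(\bE - \bG_h)}, \jump{\bF_h\CROSS\bn})_{\Sigma\cup\front}$ and $\theta(\mean{\kappa\ROT\bF_h}, \jump{(\bE - \bG_h)\CROSS\bn})_{\Sigma\cup\front}$. The second is simpler: since $\bE \in \bZ^s$ has vanishing tangential jump across $\Sigma$ and vanishing tangential trace on $\front$, I can replace $\jump{(\bE - \bG_h)\CROSS\bn}$ by $-\jump{\bG_h\CROSS\bn}$; a standard inverse-trace inequality for $\kappa\ROT\bF_h$ on each mesh element combined with Cauchy--Schwarz then delivers a bound of the form $c\,\norme[h]{\bE - \bG_h, p - d_h}\cdot\norme[h]{\bF_h, q_h}$. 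The first term is the genuinely delicate one: when $\bE$ has only $\bH^s$-regularity with $0 < s < \frac12$, the trace of $\kappa\ROT(\bE - \bG_h)$ on $\Sigma$ need not belong to $\bL^2(\Sigma)$, so naive Cauchy--Schwarz is unavailable. This is exactly the scenario addressed by Lemma~\ref{l:cont_IP} in the Appendix, which I expect to yield a bound of the form
\begin{equation*}
\bigl|(\mean{\kappa\ROT(\bE - \bG_h)}, \jump{\bF_h\CROSS\bn})_{\Sigma\cup\front}\bigr| \le c\bigl(h^{s}\norme[\bH^{s}(\calT_h)]{\kappa\ROT(\bE - \bG_h)} + h\norme[\bL^2(\calT_h)]{\ROT\kappa\ROT(\bE - \bG_h)}\bigr)\cdot h^{-\frac12}\norme[\xLtwo(\Sigma\cup\front)]{\jump{\bF_h\CROSS\bn}},
\end{equation*}
and the first factor is precisely the combination of the last two summands on the right-hand side of \eqref{cont}, while the second factor is absorbed into $\norme[h]{\bF_h, q_h}$ via the $\gamma h^{-1}$ tangent-jump part of the discrete norm. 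Collecting all nine contributions completes the proof.
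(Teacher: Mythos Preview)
Your approach is essentially the same as the paper's: the same nine-term decomposition, the same integration by parts for $(\eps\GRAD(p-d_h),\bF_h)_\Omega$, the same use of $\jump{\bE\CROSS\bn}=0$ and $\jump{\eps\bE\SCAL\bn}=0$, and the same invocation of Lemma~\ref{l:cont_IP} for the delicate consistency term. One small correction: Lemma~\ref{l:cont_IP} actually produces \emph{three} contributions in the factor paired with $h^{-\frac12}\norme[\bL^2(\Sigma\cup\front)]{\jump{\bF_h\CROSS\bn}}$, namely $h^{s}\norme[\bH^{s}(\calT_h)]{\kappa\ROT(\bE-\bG_h)} + h\norme[\bL^2(\calT_h)]{\ROT\kappa\ROT(\bE-\bG_h)} + \norme[\bL^2(\calT_h)]{\kappa\ROT(\bE-\bG_h)}$, not just the first two you wrote; the extra $\bL^2$ term is harmless since it is absorbed into $\norme[h]{\bE-\bG_h,p-d_h}$, which already sits on the right-hand side of \eqref{cont}.
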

\begin{proof}
Upon applying the Cauchy-Schwarz inequality we obtain
\begin{align*}
&\left(\kappa {\ROT (\bE-\bG_h)},{\ROT\bF_h}\right)_{\Omega_\Sigma} 
+\gamma h^{-1} \left( \mean{\kappa} \jump{(\bE-\bG_h)\CROSS \bn},
  \jump{\bF_h\CROSS \bn}\right)_{\Sigma\cup\front} \\
&\qquad + c_\alpha \Big(h^{2\alpha}\left(\DIV(\eps
  (\bE-\bG_h)),\DIV(\eps\bF_h)\right)_{\Omega_\Sigma} +
h^{2(1-\alpha)}\left(\eps\GRAD (p-d_h),\GRAD q_h\right)_\Omega\\
&\qquad
+h^{(2\alpha-1)}\left(\jump{\eps(\bE-\bG_h)\SCAL\bn},\jump{\eps\bF_h\SCAL\bn}\right)_\Sigma\Big)
\\
&\qquad \leq  \norme[h]{\bF_h,q_h} \norme[h]{\bE-\bG_h,p-d_h}. 
\end{align*}
We now bound separately the remaining terms appearing in the definition
\eqref{def_a_h} of $a_h(.,.)$:
\begin{align*}
\bal{|- \left(\eps (\bE-\bG_h),\GRAD q_h\right)_\Omega|} &\leq 
\bal{\|\eps\|_{L^\infty(\Omega)} }
h^{\alpha-1} \norme[\bL^2(\Omega)]{\nabla q_h}h^{1-\alpha}\norme[\bL^2(\Omega)]{\bE-\bG_h},\\
\left(\eps\GRAD( p-d_h),\bF_h\right)_\Omega  & \leq 
 h^\alpha \norme[\xLtwo(\Omega_\Sigma)]{\DIV(\eps
  \bF_h)}h^{-\alpha}\norme[\xLtwo(\Omega)]{p-d_h}\\
& +h^{(\alpha-\frac12)}
\norme[\bL^2(\Sigma)]{\jump{ \eps \bF_h\cdot \bn}}
h^{(\frac12-\alpha)}
\norme[\xLtwo(\Sigma)]{p-d_h},
\end{align*}
where we used an integration by parts for the second estimate. We are
now left with the consistency terms
\begin{equation}\label{mixed_term}
\left( \mean{\kappa\ROT(\bE-\bG_h)}, \jump{\bF_h\CROSS \bn} \right)_{\Sigma\cup\front}  
  + \theta \left( \mean{\kappa\ROT\bF_h}, \jump{(\bE-\bG_h)\CROSS \bn}\right)_{\Sigma\cup\front}.
\end{equation}
For the first term in \eqref{mixed_term}, we apply Lemma
\ref{l:cont_IP} with $\bv = \jump{\bF_h \CROSS \bn}$, which is a
polynomial of degree $\ell-1$, and $\bphi = \mean{\kappa\ROT
  (\bE-\bG_h)}$. Then for any $F \in \calF_h$, we infer
that
\begin{equation*}
\begin{aligned}
&\bal{|\left( \mean{\kappa\ROT(\bE-\bG_h)}, \jump{\bF_h\CROSS \bn} \right)_{F}|}
\leq c h^{-\frac12} \norme[\bL^2(F)]{\jump{\bF_h \CROSS \bn}}\\
&\qquad \times \sum_{i=1}^2\Big(
  h^{s}\norme[\bH^{s}(K_i)]{\kappa \ROT (\bE-\bG_h)}+ h
  \norme[\bL^2(K_i)]{\ROT \kappa \ROT (\bE-\bG_h)}\\
& \qquad \qquad \qquad + \norme[\bL^2(K_i)]{\kappa
    \ROT (\bE-\bG_h)}  \Big),
\end{aligned}
\end{equation*}
where $K_1,K_2 \in \calT_h$ such that $F=\overline{K_1} \cap \overline{K_2}$.
Hence, summing over all the faces we arrive at
\begin{align*}
 \bal{| ( \mean{\kappa\ROT(\bE-\bG_h)}}&, \bal{\jump{\bF_h\CROSS \bn})_{\Sigma \cup \front}|}  
\leq c \, h^{-\frac12} \norme[\bL^2(\Sigma
  \cup \front)]{\jump{\bF_h \CROSS \bn}}
 \Big( h^{s}\norme[\bH^{s}(\calT_h)]{\kappa \ROT
    (\bE-\bG_h)} \\ & + h
  \norme[\bL^2(\calT_h)]{\ROT \kappa \ROT (\bE-\bG_h)}
 + \norme[\bL^2(\calT_h)]{\kappa \ROT
    (\bE-\bG_h)} \Big).
\end{align*}
For the second term in \eqref{mixed_term} we notice that
$\jump{(\bE-\bG_h)\CROSS \bn} = -\jump{\bG_h\CROSS \bn}$ owing to the
regularity of $\bE$. Then by using
Lemma~\ref{l:cont_IP} again, we arrive at 
\begin{equation*}
\begin{aligned}
  \bal{|\left( \mean{\kappa\ROT\bF_h}, \jump{(\bE-\bG_h)\CROSS \bn}
  \right)_{\Sigma \cup \front} |}& \leq c h^{-\frac12}
  \norme[\xLtwo(\Sigma\cup\front)]{\jump{\bG_h\CROSS \bn}}
  \norme[\xLtwo(\calT_h)]{\kappa \ROT \bF_h}\\
&\leq c h^{-\frac12}
  \norme[\xLtwo(\Sigma\cup\front)]{\jump{(\bE-\bG_h)\CROSS \bn}}
  \norme[\xLtwo(\calT_h)]{\kappa \ROT \bF_h},
\end{aligned}
\end{equation*}
where we used the inverse inequalities \eqref{Inv_ineq1}, \eqref{Inv_ineq2}. 
The desired result is obtained by gathering the above estimates.
\end{proof}

The following result will be instrumental to apply the Nitsche-Aubin
duality argument and derive a convergence result in $\bL^2(\Omega)$.
\begin{prop}[Adjoint continuity]\label{prop:cont_bis}
  For any $s\in\left(0,\frac12\right)$, there is $c>0$, uniform in $h$
  such that for any $0\leq \alpha \leq 1$, the following holds for
  every $(\bE,p),(\bF,q)\in \bZ^{s} \CROSS \xHone_0(\Omega)$,
  $\bF_h\in\bY_h$, $q_h\in M_h$ and $(\bG_h,d_h)\in\bX_h\CROSS M_h$:
\begin{align}\label{cont_bis}
  c\;\frac{\bal{|a_h\left((\bE-\bG_h,p-d_h),(\bF-\bF_h,q-q_h)\right)|}}{\norme[h]{\bE-\bG_h,p-d_h}}
  &\le \norme[h]{\bF-\bF_h,q-q_h} + h^{\alpha-1}\norme[\bL^2(\Omega)]{\bF-\bF_h} \nonumber\\
  &+
  h^{s}\norme[\xHn{s}(\calT_h)]{\kappa \ROT (\bF-\bF_h)}\nonumber\\
  &+ h \norme[\xLtwo(\calT_h)]{\ROT \kappa \ROT (\bF-\bF_h)}\\
  &+h^{-\alpha}\norme[\xLtwo(\Omega)]{q-q_h} +
  h^{(\frac12-\alpha)}\norme[\xLtwo(\Sigma)]{q-q_h}.\nonumber
\end{align}

\end{prop}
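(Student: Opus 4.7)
The plan is to mimic the proof of Proposition~\ref{p:cont}, exploiting that $\bF_h\in\bY_h\subset\bH^1_0(\Omega)$ is more regular than a generic element of $\bX_h$. I first expand $a_h((\bE-\bG_h,p-d_h),(\bF-\bF_h,q-q_h))$ via~\eqref{def_a_h}. The crucial observation is that $\jump{(\bF-\bF_h)\CROSS\bn}\equiv 0$ on $\Sigma\cup\front$, because $\bF\in\bZ^s\subset\Hzcurl$ has no tangent jumps and a vanishing tangent trace on $\front$, while $\bF_h\in\bY_h$ is globally continuous and vanishes on $\front$. Consequently the primal-consistency contribution $(\mean{\kappa\ROT(\bE-\bG_h)},\jump{(\bF-\bF_h)\CROSS\bn})_{\Sigma\cup\front}$ and the tangential-penalty contribution of size $\gamma h^{-1}$ drop out entirely, which is what makes the adjoint estimate possible.

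The surviving curl-curl, div-div, grad-grad, and normal-jump-penalty terms are bounded by Cauchy-Schwarz, each factor being absorbed either into $\|(\bE-\bG_h,p-d_h)\|_h$ or into $\|(\bF-\bF_h,q-q_h)\|_h$ after redistributing suitable powers of $h$. The cross term $(\eps\GRAD(p-d_h),\bF-\bF_h)_\Omega$ is controlled directly and split as $[h^{1-\alpha}\|\eps^{\frac12}\GRAD(p-d_h)\|_{\bL^2(\Omega)}]\,[h^{\alpha-1}\|\bF-\bF_h\|_{\bL^2(\Omega)}]$, yielding the $h^{\alpha-1}\|\bF-\bF_h\|_{\bL^2(\Omega)}$ term on the right-hand side of~\eqref{cont_bis}.

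The next step departs from Proposition~\ref{p:cont}: for the term $-(\eps(\bE-\bG_h),\GRAD(q-q_h))_\Omega$, the target bound involves $L^2$-norms of $q-q_h$ rather than of $\GRAD(q-q_h)$, so I integrate by parts subdomain by subdomain. Using that $q-q_h\in\xHone_0(\Omega)$ is continuous across $\Sigma$ and vanishes on $\front$, this produces
\[
-(\eps(\bE-\bG_h),\GRAD(q-q_h))_\Omega = (\DIV(\eps(\bE-\bG_h)),q-q_h)_{\Omega_\Sigma} - (\jump{\eps(\bE-\bG_h)\SCAL\bn},q-q_h)_\Sigma,
\]
and Cauchy-Schwarz together with the appropriate factors of $h$ generates exactly the $h^{-\alpha}\|q-q_h\|_{\xLtwo(\Omega)}$ and $h^{\frac12-\alpha}\|q-q_h\|_{\xLtwo(\Sigma)}$ contributions on the right-hand side of~\eqref{cont_bis}.

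The main obstacle is the adjoint-consistency contribution $\theta(\mean{\kappa\ROT(\bF-\bF_h)},\jump{(\bE-\bG_h)\CROSS\bn})_{\Sigma\cup\front}$. Since $\bE\in\bZ^s$ has vanishing tangent jumps, $\jump{(\bE-\bG_h)\CROSS\bn}=-\jump{\bG_h\CROSS\bn}$, so the polynomial slot is filled by $\jump{\bG_h\CROSS\bn}$ (of degree $\ell-1$) and the rough slot by $\kappa\ROT(\bF-\bF_h)$, whose requisite elementwise regularity follows from $\bF\in\bZ^s$, $\kappa\in\mathrm{W}^{1,\infty}_\Sigma(\Omega)$, and $\bF_h$ being a piecewise polynomial. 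I then apply Lemma~\ref{l:cont_IP} face by face with the polynomial and non-polynomial roles \emph{swapped} compared to Proposition~\ref{p:cont}; summation over faces and Cauchy-Schwarz yield the factor $h^{-\frac12}\|\jump{(\bE-\bG_h)\CROSS\bn}\|_{\xLtwo(\Sigma\cup\front)}$, which is absorbed into $\|(\bE-\bG_h,p-d_h)\|_h$, multiplied by the three elementwise quantities $\|\kappa\ROT(\bF-\bF_h)\|_{\xLtwo(\calT_h)}$, $h^{s}\|\kappa\ROT(\bF-\bF_h)\|_{\bH^s(\calT_h)}$, and $h\|\ROT\kappa\ROT(\bF-\bF_h)\|_{\xLtwo(\calT_h)}$. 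The first of these is controlled by $\|(\bF-\bF_h,q-q_h)\|_h$ and the other two are precisely the regularity terms already present on the right-hand side of~\eqref{cont_bis}; gathering all estimates concludes the proof.
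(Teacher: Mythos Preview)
Your proposal is correct and follows essentially the same approach as the paper, whose proof simply says that one proceeds as in Proposition~\ref{p:cont} with the key observation that $\left(\mean{\kappa\ROT(\bE-\bG_h)},\jump{(\bF-\bF_h)\CROSS\bn}\right)_{\Sigma\cup\front}=0$ because $\bF_h\in\bY_h\subset\bH^1_0(\Omega)$. Your detailed treatment of the cross terms (direct bound on $(\eps\GRAD(p-d_h),\bF-\bF_h)_\Omega$, integration by parts on $-(\eps(\bE-\bG_h),\GRAD(q-q_h))_\Omega$) and of the surviving consistency term via Lemma~\ref{l:cont_IP} is exactly the role-swapped analogue of the corresponding steps in Proposition~\ref{p:cont}, so what you describe as a ``departure'' is in fact precisely what ``proceeds similarly'' means here.
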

\begin{proof}
  The proof proceeds similarly as in the proof of
  Proposition~\ref{p:cont}. The only difference here is that we have $
  \left(\mean{\ROT(\bE-\bG_h)},\jump{(\bF-\bF_h)\CROSS\bn}\right)_{\Sigma\cup\front}
  = 0$, owing to the assumption that $\bF_h\in \bY_h \subset
  \bX_h\cap \bH^1_0(\Omega)$. This identity makes the analysis of the
  consistency term \eqref{mixed_term} tractable.
\end{proof}

\section{Convergence analysis for the boundary value problem}
\label{Sec:Convergence_analysis_BVP}
In the first part of this section, we prove two convergence results
for the discrete problem \eqref{e:discrete_formulation} using the
discrete norm $\|\cdot\|_h$, one assuming minimal regularity and the
other assuming full smoothness. In the second part of the section we
use a Nitsche-Aubin duality argument to establish convergence in
$\bL^2(\Omega)$. The performance of the method is numerically
illustrated at the end of the section.

\subsection{Convergence in the discrete norm.}
We assume first that the solution to the boundary value problem
\eqref{bvp} has minimal regularity properties, and we start with the
Galerkin orthogonality.
\begin{lem}[Galerkin Orthogonality] Assume \eqref{Hyp:eps_mu}, then
  the Galerkin orthogonality holds, \ie let $(\bE,p)$ be the solution
  of \eqref{bvp} and $(\bE_h,p_h)$ be the solution of
  \eqref{e:discrete_formulation}, then for any $(\bF_h,q_h)\in \bX_h
  \CROSS M_h$
\begin{equation}\label{galerkin_ortho}
a_h\left((\bE-\bE_h,p-p_h),(\bF_h,q_h)\right) = 0.
\end{equation}
\end{lem}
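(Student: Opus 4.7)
The plan is to combine the consistency property established in Lemma~\ref{Lem:consistency} with the discrete equation \eqref{e:discrete_formulation} and invoke bilinearity of the extended form $a_h$.

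First, I would verify that the expression $a_h((\bE-\bE_h, p - p_h), (\bF_h, q_h))$ is well defined. By Theorem~\ref{Thm:Hs_stability_bvp}, the exact solution satisfies $\bE \in \bZ^s$ for some $s>0$, while $\bE_h \in \bX_h$ and $p, p_h \in \xHone_0(\Omega)$ (since $M_h\subset \xHone_0(\Omega)$). Thus $(\bE-\bE_h, p-p_h)$ lies in $(\bZ^s + \bX_h)\times \xHone_0(\Omega)$, and Proposition~\ref{Prop:extension_ah} guarantees that the bilinear form is defined on this space paired with $\bX_h\times M_h$.

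Next, I would apply Lemma~\ref{Lem:consistency} with the test function $(\bF, \bF_h, q) = (\bzero, \bF_h, q_h)$, noting that $q_h \in M_h \subset \xHone_0(\Omega)$, to get
\begin{equation*}
a_h\left((\bE,p),(\bF_h,q_h)\right) = \left(\eps\bg,\bF_h\right)_\Omega + c_\alpha h^{2(1-\alpha)}\left(\eps\bg,\GRAD q_h\right)_\Omega.
\end{equation*}
On the other hand, the discrete formulation \eqref{e:discrete_formulation} yields exactly
\begin{equation*}
a_h\left((\bE_h,p_h),(\bF_h,q_h)\right) = \left(\eps\bg,\bF_h\right)_\Omega + c_\alpha h^{2(1-\alpha)}\left(\eps\bg,\GRAD q_h\right)_\Omega.
\end{equation*}
Subtracting the two identities and invoking the bilinearity of the extended form $a_h$ in its first argument gives \eqref{galerkin_ortho}.

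There is no real obstacle here; the statement is essentially a bookkeeping consequence of the consistency lemma and of the fact that the extension of $a_h$ remains bilinear on its enlarged domain of definition. The only subtle point, already handled by Proposition~\ref{Prop:extension_ah}, is the interpretation of the boundary/interface terms in $a_{0h}$ when the first argument has only $\bZ^s$-regularity, which is made rigorous via Lemma~\ref{l:cont_IP}.
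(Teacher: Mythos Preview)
Your proposal is correct and follows exactly the approach of the paper, which states in one line that the result is ``a direct consequence of Lemma~\ref{Lem:consistency} and formulation \eqref{e:discrete_formulation}.'' You have simply spelled out the subtraction explicitly and added the (useful) remark that Proposition~\ref{Prop:extension_ah} ensures the expression $a_h((\bE-\bE_h,p-p_h),(\bF_h,q_h))$ is well defined.
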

\begin{proof} 
  This is a direct consequence of Lemma~\ref{Lem:consistency} and
  formulation \eqref{e:discrete_formulation}.
\end{proof}

\begin{thm}\label{thm:cvg_norme_h_min_reg}
  Let $\bg\in\bL^2(\Omega)$ and $\tau\in
  (0,\min(\tau_\eps,\tau_\kappa))$ where $\tau_\eps$ and $\tau_\kappa$
  are defined in Theorem~\ref{Thm:Hs_stability_bvp}. Let $(\bE,p)$ and
  $(\bE_h,p_h)$ be the solution of~\eqref{bvp} and
  \eqref{e:discrete_formulation}, respectively. Then, for any
  $\alpha\in\left(\frac{\ell(1-\tau)}{\ell-\tau},1\right]$, there
  exists $c>0$, uniform in $h$, such that
\begin{equation}\label{eq:cvg_norme_h_min_reg}
\|\bE-\bE_h,p-p_h\|_h \le ch^r\|\bg\|_{\bL^2(\Omega)},
\end{equation}
where $r=\alpha-1+\tau\left(1-\frac{\alpha}{\ell}\right)$ if
$\DIV(\eps\bg)=0$ and
$r=\min\left(1-\alpha,\alpha-1+\tau\left(1-\frac{\alpha}{\ell}\right)\right)$
otherwise.
\end{thm}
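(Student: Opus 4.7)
\textbf{Proof proposal for Theorem \ref{thm:cvg_norme_h_min_reg}.} The plan is to carry out a Strang/C\'ea-type argument using the three ingredients already in place: coercivity, Galerkin orthogonality, and the continuity bound from Proposition~\ref{p:cont}. Given any pair of approximants $(\bG_h,d_h)\in\bX_h\CROSS M_h$, combining coercivity with Galerkin orthogonality \eqref{galerkin_ortho} yields
\begin{equation*}
\|\bE_h-\bG_h,p_h-d_h\|_h^2 \;\le\; c\,a_h\!\bigl((\bE-\bG_h,p-d_h),(\bE_h-\bG_h,p_h-d_h)\bigr),
\end{equation*}
so that Proposition~\ref{p:cont} and a triangle inequality reduce the whole problem to estimating the six interpolation-type quantities on the right-hand side of \eqref{cont} for a well-chosen pair $(\bG_h,d_h)$.

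The key choice is $\bG_h:=\calC_h^g\calK_\delta \bE$ and $d_h:=\calC_h^p p$ for a smoothing parameter $\delta\in(0,\tfrac12)$ that will be tuned in terms of $h$. Since $\bE\in\bH^\tau(\Omega)$ with $\tau<\tfrac12$ may not even lie in $\bH^{1/2}$, neither $\calC_h^g$ nor $\calC_h^l$ applies directly; the smoothing step $\calK_\delta$ (Theorem~\ref{thm:approx_op}) is exactly what makes $\calK_\delta\bE\in\pmb{\calC}^\infty_0(\Omega)$ accessible to $\calC_h^g$. I would then split every error as $\bE-\bG_h=(\bE-\calK_\delta\bE)+(\calK_\delta\bE-\calC_h^g\calK_\delta\bE)$, controlling the first piece by \eqref{approx_delta}, \eqref{approx_delta_rot} (using also the commuting identity \eqref{eq:commute_K} for the curl terms), and the second piece by the classical finite element approximation \eqref{approx_Clement_smooth} combined with the inverse-type bound \eqref{approx_exp} on $\|\calK_\delta\bE\|_{\bH^r}$. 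For the scalar variable, either $p\equiv0$ (when $\DIV(\eps\bg)=0$) and $d_h=0$ renders the pressure terms trivial, or $p\in H^1_0(\Omega)$ with $\|p\|_{H^1}\le c\|\bg\|_{L^2}$ from Theorem~\ref{Thm:Hs_stability_bvp}, and then \eqref{scalapprox_Clement} yields $\|p-d_h\|_{L^2}\le ch\|\bg\|_{L^2}$ and $\|\GRAD(p-d_h)\|_{L^2}\le c\|\bg\|_{L^2}$.

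The six individual contributions on the right-hand side of \eqref{cont} are essentially of the form $h^{a}(\delta^{\tau}+h^{b}\delta^{\tau-b})$ for various exponents $a,b$ depending on the term under consideration. The optimization that equates the dominant term $h^{\alpha-1}\|\bE-\bG_h\|_{\bL^2}$ with the target $h^r$ forces $\delta=h^{1-\alpha/\ell}$; indeed, with this choice $\delta^\tau=h^{\tau(1-\alpha/\ell)}$ and the $h^\ell\delta^{\tau-\ell}$ tail contributes $h^{\alpha}\cdot h^{\tau(1-\alpha/\ell)}$, so that the slowest rate is precisely $h^{\alpha-1+\tau(1-\alpha/\ell)}=h^r$. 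Checking the remaining terms, the curl penalty and divergence penalty deliver rates at least as fast as $h^r$ (the constraint $\alpha>\ell(1-\tau)/(\ell-\tau)$ is exactly equivalent to $r>0$ and is what ensures $\delta<\tfrac12$ for small $h$). In the case $\DIV(\eps\bg)\ne 0$, the term $c_\alpha h^{2(1-\alpha)}\|\eps^{1/2}\GRAD d_h\|^2$ inside $\|\cdot\|_h$ introduces an unavoidable $h^{1-\alpha}$ contribution, producing the $\min$ in the statement.

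The main technical obstacle is the term $h\|\ROT(\kappa\ROT(\bE-\bG_h))\|_{\bL^2(\calT_h)}$ coming from Lemma~\ref{l:cont_IP}: it involves \emph{second}-order element-wise derivatives of a field that is only $\bH^\tau$. I would handle it by exploiting the PDE regularity $\ROT(\kappa\ROT\bE)\in\bL^2(\Omega)$ provided by \eqref{Rot_kappa_Rot_E_in_L2} for the continuous piece, and by controlling $\ROT(\kappa\ROT\calK_\delta\bE)$ through \eqref{eq:commute_K} together with the estimate $\|\calK_\delta \bE\|_{\bH^2}\le K\delta^{\tau-2}$ from \eqref{approx_exp} on the smooth piece; the residual $\ROT(\kappa\ROT(\calK_\delta\bE-\calC_h^g\calK_\delta\bE))$ is absorbed by inverse inequalities on the piecewise polynomial Cl\'ement image, producing a factor $h\delta^{-1}=h^{\alpha/\ell}$ which is $\le ch^r$ under the standing hypothesis on $\alpha$. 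Carrying out this bookkeeping and gathering the resulting bounds with $\delta=h^{1-\alpha/\ell}$ then yields \eqref{eq:cvg_norme_h_min_reg}.
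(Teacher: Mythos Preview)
Your proposal follows essentially the same route as the paper: choose $\bG_h=\calC_h^g\calK_\delta\bE$, $d_h=\calC_h^p p$, combine coercivity, Galerkin orthogonality, and Proposition~\ref{p:cont}, split every error through the intermediate $\calK_\delta\bE$, and optimize with $\delta=h^{1-\alpha/\ell}$. The paper organizes the argument as a three-term triangle inequality on $\|\bE-\bE_h,p-p_h\|_h$ (through $\calK_\delta\bE$ and then $\calC_h^g\calK_\delta\bE$), applies Proposition~\ref{p:cont} with $s=1-\alpha$, and also invokes Lemma~\ref{Lem:L2Gamma} to control the interface/boundary $\bL^2$-norms of $\calK_\delta\bE$ and of $p-\calC_h^p p$; your sketch omits these trace estimates, but they are routine once you notice they are needed.

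The one place where your bookkeeping is off is the second-order curl term. For the piece $h\|\ROT(\kappa\ROT(\calK_\delta\bE-\calC_h^g\calK_\delta\bE))\|_{\bL^2(\calT_h)}$ you cannot simply use an inverse inequality ``producing a factor $h\delta^{-1}$'': $\calK_\delta\bE$ is not a polynomial, and an inverse inequality on $\calC_h^g\calK_\delta\bE$ alone yields only $\|\calK_\delta\bE\|_{\bH^1}\lesssim\delta^{\tau-1}$, which is strictly slower than $h^r$. The paper instead uses the full approximation property \eqref{approx_Clement_smooth} at order $\ell$, giving $h^{\ell-1}\|\calK_\delta\bE\|_{\bH^\ell}\le c\,h^{\ell-1}\delta^{\tau-\ell}$, which with $\delta=h^{1-\alpha/\ell}$ is exactly $h^r$. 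Similarly, for the piece $h\|\ROT(\kappa\ROT\calK_\delta\bE)\|$ the paper uses \eqref{eq:commute_K} to write $\ROT\calK_\delta\bE=\bar\delta^{-1}\calK_\delta(\ROT\bE)$ and then \eqref{approx_exp} on $\ROT\bE\in\bH^\tau$, obtaining $h\delta^{\tau-1}$; your suggested bound via $\|\calK_\delta\bE\|_{\bH^2}\le c\delta^{\tau-2}$ would give $h\delta^{\tau-2}$, which is not sharp enough for $\ell\ge 3$. Once you correct these two estimates, your argument coincides with the paper's.
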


\begin{proof}
  We first recall that, owing to Theorem~\ref{Thm:Hs_stability_bvp},
  we have $\bE\in\bH^{\tau}(\Omega)\cap\Hzrcurl[\Omega][\tau]$,
  together with the estimates
\[
\|\bE\|_{\bH^\tau(\Omega)} + \|\ROT\bE\|_{\bH^{\tau}(\Omega)} +
\|\ROT(\kappa\ROT\bE)\|_{\bL^2(\Omega)} +\|\GRAD p\|_{\bL^2(\Omega)}
\le c\, \|\bg\|_{\bL^2(\Omega)}.
\]
We establish~\eqref{eq:cvg_norme_h_min_reg} by using the triangular
inequality
\begin{align*}
  \|\bE-\bE_h,p-p_h\|_h & \le \|\bE-\calK_\delta\bE,0\|_h 
+\|\calK_\delta\bE-\calC_h^g\calK_\delta\bE,p-\calC^p_hp\|_h \\
  &+\|\calC_h\calK_\delta\bE-\bE_h,\calC^p_hp-p_h\|_h,
\end{align*}
for some $\delta>0$ to be defined later, and by bounding from above
the three terms separately.  

Using the definition of $\|\cdot\|_h$ together with the approximation
properties of $\calK_\delta$,
\cf~\eqref{approx_delta}-\eqref{approx_delta_rot}-\eqref{approx_exp},
we conclude that
\begin{align*}
  \|\bE-\calK_\delta\bE,0\|_h &\le
  c\,\left(\delta^{\tau}\|\ROT\bE\|_{\bH^{\tau}(\Omega)}
  +h^{\alpha}\delta^{\tau-1}\|\bE\|_{\bH^\tau(\Omega)}
  +h^{\alpha-\frac12}\|\calK_\delta\bE\|_{\bL^2(\Sigma)}\right).
\end{align*}
Note that the estimate \eqref{approx_delta_rot} is critical to obtain
a bound that depends only on $\|\ROT\bE\|_{\bH^{\tau}(\Omega)}$
instead of $\|\bE\|_{\bH^{1+\tau}(\Omega)}$.  To estimate the last
term in the above inequality, we apply~\eqref{eq:L2Gamma_2} with
$\Theta = \frac{1-2\tau}{2(1-\tau)}$,
\begin{align*}
  h^{\alpha-\frac12}\|\calK_\delta\bE\|_{\bL^2(\Sigma)}&
  \le ch^{\alpha-\frac12}\|\calK_\delta\bE\|_{\bH^\tau(\Omega)}^{1-\Theta}\|\calK_\delta\bE\|_{\bH^1(\Omega)}^{\Theta} \\
  &\le ch^{\alpha-\frac12}\delta^{\Theta(\tau-1)}\|\bE\|_{\bH^\tau(\Omega)}
  \le
  ch^{\alpha-\frac12}\delta^{\tau-\frac12}\|\bE\|_{\bH^\tau(\Omega)}.
\end{align*}
Finally, we arrive at
\begin{equation}\label{eq:cvg_norm_h_min_reg_1}
  \|\bE-\calK_\delta\bE,0\|_h \le c\left(\delta^\tau+h^{\alpha}\delta^{\tau-1}
    +h^{\alpha-\frac12}\delta^{\tau-\frac12}\right)\|\bg\|_{\bL^2(\Omega)}.
\end{equation}

Let us now turn our attention to
$\|\calK_\delta\bE-\calC_h^g\calK_\delta\bE,p-\calC_h^p p\|_h$. Owing
to the definition of $\calC_h^g$ and the regularity of
$\calK_\delta\bE$, we have
$\calC_h^g\calK_\delta\bE\in\bH^1_0(\Omega)\subset \Hzcurl$, so that
we only have four terms to bound (the jumps of
$\calC_h^g\calK_\delta\bE$ across the mesh interfaces and the tangent
trace on $\front$ are zero, \cf Remark~\ref{rem:chf_smooth}). Using
the properties of $\calK_\delta$ and $\calC_h^g$ together with
~\eqref{eq:L2Gamma_1} we deduce that
\begin{align*}
  \|\kappa^{\frac12}\ROT(\calK_\delta\bE-\calC_h^g\calK_\delta\bE)\|_{\bL^2(\Omega)}
  &\le c\,h^{\ell-1}\|\calK_\delta\bE\|_{\bH^{\ell}(\Omega)}
  \le ch^{\ell-1}\delta^{\tau-\ell}\|\bE\|_{\bH^{\tau}(\Omega)}, \\
  h^{\alpha}\|\DIV\left(\eps(\calK_\delta\bE
    -\calC_h^g\calK_\delta\bE)\right)\|_{\xLtwo(\Omega_\Sigma)} &\le
  c\,h^{\alpha+\ell-1}\|\calK_\delta\bE\|_{\bH^{\ell}(\Omega_\Sigma)}
  \le ch^{\alpha+\ell-1}\delta^{\tau-\ell}\|\bE\|_{\bH^{\tau}(\Omega)}, \\
  h^{1-\alpha}\|\eps^{\frac12}\GRAD(p-\calC_h^p p)\|_{\bL^2(\Omega)}
  &\le c\,h^{1-\alpha}\|p\|_{\xHone_0(\Omega)}, \\
  h^{\alpha-\frac12}\|\jump{\eps(\calK_\delta\bE
    -\calC_h^g\calK_\delta\bE)\SCAL\bn}\|_{\bL^2(\Sigma)} &\le
  c\,h^{\alpha-\frac12} \|\calK_\delta\bE
  -\calC_h^g\calK_\delta\bE\|_{\bL^2(\Sigma)} \\
  &\le c\,h^{\alpha-\frac12}\|\calK_\delta\bE
  -\calC_h^g\calK_\delta\bE\|_{\bL^2(\Omega)}^{1-\frac1{2\alpha}}\|\calK_\delta\bE-\calC_h^g\calK_\delta\bE\|_{\bH^\alpha(\Omega)}^{\frac1{2\alpha}} \\
  &\le c\,h^{\alpha-\frac12} h^{\ell\left(1-\frac1{2\alpha}\right)}h^{(\ell-\alpha)\frac1{2\alpha}}\|\calK_\delta\bE\|_{\bH^{\ell}(\Omega)} \\
  &\le
  c\,h^{\alpha+\ell-1}\delta^{\tau-\ell}\|\bE\|_{\bH^\tau(\Omega)}.
\end{align*}
When combining the above estimates, we obtain
\begin{equation}\label{eq:cvg_norm_h_min_reg_2}
\|\calK_\delta\bE-\calC_h^g\calK_\delta\bE,p-\calC_h^p p\|_h
\le c\left(h^{\ell-1}\delta^{\tau-\ell}+\xi h^{1-\alpha}\right)\|\bg\|_{\bL^2(\Omega)},
\end{equation}
where $\xi=0$ if $\DIV{(\eps\bg)}=0$ and $\xi=1$ otherwise (note that $p=0$
when $\DIV{(\eps\bg)}=0$).

The last term, $\|\calC_h^g\calK_\delta\bE-\bE_h,\calC_h^p p-p_h\|_h$, is a
little more subtle to handle. We start from the coercivity of
$a_h$, \eqref{r:positive}, and use both the Galerkin
orthogonality~\eqref{galerkin_ortho} and the continuity of
$a_h$, \eqref{cont}, with $s=1-\alpha$ to get the following estimate:
\begin{align*}
  \|\calC_h^g\calK_\delta\bE&-\bE_h,\calC_h^p p-p_h\|_h \\
  & \le c\, \frac{a_h\left((\calC_h^g\calK_\delta\bE-\bE_h,\calC_h^p
      p-p_h),
      (\calC_h^g\calK_\delta\bE-\bE_h,\calC_h^p p-p_h)\right)}{\|\calC_h^g\calK_\delta\bE-\bE_h,\calC_h^p p-p_h\|_h} \\
  & \le c\, \frac{a_h\left((\calC_h^g\calK_\delta\bE-\bE,\calC_h^p
      p-p),
      (\calC_h^g\calK_\delta\bE-\bE_h,\calC_h^p p-p_h)\right)}{\|\calC_h^g\calK_\delta\bE-\bE_h,\calC_h^p p-p_h\|_h} \\
  & \le c\, \big(\|\calC_h^g\calK_\delta\bE-\bE,\calC_h^p p-p\|_h
  + h^{\alpha-1}\|\bE-\calC_h^g\calK_\delta\bE\|_{\bL^2(\Omega)} \\
  &\quad +
  h^{1-\alpha}\|\kappa\ROT(\bE-\calC_h^g\calK_\delta\bE)\|_{\bH^{1-\alpha}(\Omega)}
  + h^{-\alpha}\|p-\calC_h^p  p\|_{\xLtwo(\Omega)} \\
  &\quad +
  h\|\ROT\kappa\ROT(\bE-\calC_h^g\calK_\delta\bE)\|_{\bL^2(\calT_h)}+
  h^{\frac12-\alpha}\|p-\calC_h^p p\|_{\xLtwo(\Sigma)}\big).
\end{align*}
We now handle each term in the right hand side separately. Using the
triangle inequality $\|\calC_h^g\calK_\delta\bE-\bE,\calC_h^p p-p\|_h \le
\|\calC_h^g\calK_\delta\bE-\calK_\delta\bE,\calC_h^p p-p\|_h +
\|\calK_\delta\bE-\bE,0\|_h $ and the estimates 
\eqref{eq:cvg_norm_h_min_reg_1}-\eqref{eq:cvg_norm_h_min_reg_2},
we obtain 
\[
\|\calC_h^g\calK_\delta\bE-\bE,\calC_h^p p-p\|_h \le
c\left(\delta^\tau+h^{\alpha}\delta^{\tau-1}
  +h^{\alpha-\frac12}\delta^{\tau-\frac12}
  +h^{\ell-1}\delta^{\tau-\ell}+\xi h^{1-\alpha}\right)\|\bg\|_{\bL^2(\Omega)}.
\]
Similarly, we obtain
\begin{align*}
  h^{\alpha-1}\|\bE-\calC_h^g\calK_\delta\bE\|_{\bL^2(\Omega)}
  &\le c\left(h^{\alpha-1}\delta^\tau+h^{\alpha+\ell-1}\delta^{\tau-\ell}\right)\|\bg\|_{\bL^2(\Omega)}.
\end{align*}
\bal{Now using that $\kappa\ROT\bE\in \bH^\tau(\Omega)$ and $1-\alpha\le\tau$,
owing to the assumption
$\alpha\in\left(\frac{\ell(1-\tau)}{\ell-\tau},1\right]$, we infer that}
\begin{align*}
  h^{1-\alpha}\|\kappa\ROT(\bE-\calC_h^g\calK_\delta\bE)\|_{\bH^{1-\alpha}(\Omega)}
  &\le c\left(h^{1-\alpha}\delta^{\tau+\alpha-1} +
    h^{\ell-1}\delta^{\tau-\ell}\right)\|\bg\|_{\bL^2(\Omega)}.
\end{align*}
For the last term involving $\bE$ we use the commuting property
$\bar\delta\ROT\calK_\delta\bE = \calK_\delta\ROT\bE$, see
\eqref{eq:commute_K}, to derive
\begin{align*}
h\|\ROT(\kappa\ROT(\bE&-\calC_h^g\calK_\delta\bE))\|_{\bL^2(\calT_h)} 
\le h\|\ROT(\kappa\ROT\bE)\|_{\bL^2(\calT_h)}
+h\|\ROT(\kappa\ROT\calK_\delta\bE)\|_{\bL^2(\calT_h)} \\
&+ h\|\ROT(\kappa\ROT(\calK_\delta\bE-\calC_h^g\calK_\delta\bE))\|_{\bL^2(\calT_h)} \\
&\le c\left(h\|\bg\|_{\bL^2(\Omega)} + h\|\ROT\calK_\delta\bE\|_{\bH^1(\Omega)} 
+ h^{\ell-1}\|\calK_\delta\bE\|_{\bH^{\ell}(\Omega)}\right) \\
&\le c\left(h\|\bg\|_{\bL^2(\Omega)} + h\|\calK_\delta\ROT\bE\|_{\bH^1(\Omega)} 
+ h^{\ell-1}\delta^{\tau-\ell}\|\bE\|_{\bH^{\tau}(\Omega)}\right) \\
&\le c\left(h + h\delta^{\tau-1} + h^{\ell-1}\delta^{\tau-\ell} \right) \|\bg\|_{\bL^2(\Omega)}.
\end{align*}
For the remaining terms involving $p$, we use~\eqref{eq:L2Gamma_1}
together with the approximation properties of $\calC_h^p$:
\begin{align*}
  h^{-\alpha}\|p-\calC_h^p p\|_{\xLtwo(\Omega)} &\le
  c\, h^{1-\alpha}\|p\|_{\xHone_0(\Omega)}
  \le c\xi h^{1-\alpha}\|\bg\|_{\bL^2(\Omega)}, \\
  h^{\frac12-\alpha}\|p-\calC_h^p p\|_{\xLtwo(\Sigma)} &\le
  h^{\frac12-\alpha}\|p-\calC_h^p
  p\|_{\xLtwo(\Omega)}^{1-\frac1{2\alpha}}
  \|p-\calC_h^p p\|_{\xHn{\alpha}(\Omega)}^{\frac1{2\alpha}} \\
  &  \le c\,h^{\frac12-\alpha}
  h^{1-\frac1{2\alpha}}h^{(1-\alpha)\frac1{2\alpha}}\|p\|_{\xHone_0(\Omega)}
  \le c \xi h^{1-\alpha}\|\bg\|_{\bL^2(\Omega)}.
\end{align*}
Gathering all the above estimates together with~\eqref{eq:cvg_norm_h_min_reg_1} 
and~\eqref{eq:cvg_norm_h_min_reg_2}, we finally obtain
\begin{equation}\label{eq:cvg_norm_h_min_reg_3}
\begin{aligned}
\|\bE-\bE_h,p-p_h\|_h \le &c\big(\delta^{\tau} + \xi h^{1-\alpha} 
+ h + h\delta^{\tau-1} + h^{\ell-1}\delta^{\tau-\ell}+h^{\alpha-1}\delta^\tau\\ 
&+h^{1-\alpha}\delta^{\tau+\alpha-1}+h^{\alpha}\delta^{\tau-1} 
+ h^{\alpha-\frac12}\delta^{\tau-\frac12}+\bal{h^{\alpha+\ell-1}\delta^{\tau-\ell}}\big)\|\bg\|_{\bL^2(\Omega)}.
\end{aligned}
\end{equation}
We want to use $\delta=h^\beta$ for some $\beta\in(0,1)$, \ie $\delta
h^{-1}\to +\infty$ as $h\to 0$.  Once the negligible terms are removed
in~\eqref{eq:cvg_norm_h_min_reg_3}, we derive the following estimate:
\[
\|\bE-\bE_h,p-p_h\|_h \le c\big(h^{\alpha-1}\delta^\tau 
+ \xi h^{1-\alpha} + h^{\ell-1}\delta^{\tau-\ell}\big)\|\bg\|_{\bL^2(\Omega)}.
\]
Using $\delta = h^{1-\frac{\alpha}{\ell}}$ implies that
$h^{\alpha-1}\delta^\tau = h^{\ell-1}\delta^{\tau-\ell}$ and we arrive
at
\[
\|\bE-\bE_h,p-p_h\|_h \le c(h^{\alpha-1+\tau\left(1-\frac{\alpha}{\ell}\right)}
+\xi h^{1-\alpha})\|\bg\|_{\bL^2(\Omega)},
\]
which leads to~\eqref{eq:cvg_norme_h_min_reg} with
$r:=\min\left(1-\alpha,\alpha-1+\tau\left(1-\frac{\alpha}{\ell}\right)\right)$
if $\DIV(\eps\bg)\not=0$ and $r=\alpha-1+\tau\left(1-\frac{\alpha}{\ell}\right)$
  otherwise.  Note that the assumed lower bound on $\alpha$ ensures
  that we have a convergence result as $h\to 0$.
\end{proof}

\begin{rem}[$\alpha=1$] \label{rem:s_less_14} {Note that the best
  choice for $\alpha$ when $\DIV(\eps\bg)=0$ is $\alpha=1$; the
  convergence rate is then $\tau\left(1-\frac{1}{\ell}\right)$ and it
  approaches the optimal rate $\tau$ as $\ell$ increases.
  When $\DIV(\eps\bg)\ne 0$, the best choice for $\alpha$ is such
  that $1-\alpha =
  \alpha-1+\tau\left(1-\frac{\alpha}{\ell}\right)$. This choice gives
  the following convergence rate $\frac{\tau}{2}(1-\frac{1}{\ell}) < r
  = {\tau}\frac{\ell-1}{2\ell-\tau} < \frac{\tau}{2}$.}
\end{rem}

We now derive a convergence estimate assuming that the solution of
\eqref{bvp} is smooth. In the next theorem we allow the parameter
$\alpha$ to be any number in the interval $[0,1]$.
\begin{thm}\label{thm:cvg_norme_h_smooth}
  Let $\bg\in\bL^2(\Omega)$ and let $(\bE,p)$ and $(\bE_h,p_h)$ be the
  solution of~\eqref{bvp} and \eqref{e:discrete_formulation},
  respectively.   Assume moreover that
  $\bE\in\bH^{k+1}(\Omega_\Sigma)$ and $p\in \xHn{k+\alpha}(\Omega_\Sigma)$ for some
  $0<k\le \ell-1$. Then there exists $c>0$, uniform in $h$, such that
\begin{equation}\label{eq:cvg_norme_h_smooth}
  \|\bE-\bE_h,p-p_h\|_h \le c\,h^k\left(\|\bg\|_{\bL^2(\Omega)}
    +\|\bE\|_{\bH^{k+1}(\Omega_\Sigma)}+\|p\|_{\xHn{k+\alpha}(\Omega_\Sigma)}\right).
\end{equation}
\end{thm}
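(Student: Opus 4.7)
My plan is to mirror the proof of Theorem~\ref{thm:cvg_norme_h_min_reg} but to exploit the extra regularity so as to bypass the smoothing operator $\calK_\delta$ entirely. Since $\bE\in \bH^{k+1}(\Omega_\Sigma)$ piecewise, the local Clément-type operator $\calC_h^l$ from \eqref{stab_Clement}--\eqref{approx_Clement} applies directly to $\bE$, and $\calC_h^p p\in M_h$ approximates $p$. I would split
\begin{equation*}
\|\bE-\bE_h,p-p_h\|_h \le \|\bE-\calC_h^l\bE,p-\calC_h^p p\|_h + \|\calC_h^l\bE-\bE_h,\calC_h^p p-p_h\|_h
\end{equation*}
and bound the two summands separately.

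For the first summand I would estimate each of the five contributions in the discrete norm \eqref{discrete_norm} directly. The key observations are that $\jump{\bE\CROSS\bn}=0$ on $\Sigma\cup\front$ (since $\bE\in\Hzcurl$) and $\jump{\eps\bE\SCAL\bn}=0$ on $\Sigma$ (since $\DIV(\eps\bE)=0$); hence the jump terms of $\calC_h^l\bE$ reduce to jumps of the interpolation error and are controlled by local trace inequalities. Coupled with \eqref{approx_Clement} and \eqref{scalapprox_Clement}, and the assumption $p\in\xHn{k+\alpha}(\Omega_\Sigma)$, this gives
\begin{equation*}
\|\bE-\calC_h^l\bE,p-\calC_h^p p\|_h \le c\, h^k\bigl(\|\bE\|_{\bH^{k+1}(\Omega_\Sigma)}+\|p\|_{\xHn{k+\alpha}(\Omega_\Sigma)}\bigr),
\end{equation*}
where the scalings $h^\alpha$, $h^{1-\alpha}$ and $h^{\alpha-\frac12}$ appearing in \eqref{discrete_norm} telescope exactly with the orders coming from the approximation estimates.

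For the second summand I would combine the coercivity \eqref{r:positive}, the Galerkin orthogonality \eqref{galerkin_ortho}, and the continuity estimate \eqref{cont} of Proposition~\ref{p:cont} applied with $\bG_h=\calC_h^l\bE$, $d_h=\calC_h^p p$ and any $s\in(0,\tfrac12)$. Each of the six terms on the right-hand side of \eqref{cont} is then treated separately: the $\bL^2$, $\bH^s$ and jump norms on $p$ are absorbed thanks to $p\in \xHn{k+\alpha}(\Omega_\Sigma)$; the $h^{\alpha-1}\|\bE-\calC_h^l\bE\|_{\bL^2(\Omega)}$ and $h^s\|\kappa \ROT(\bE-\calC_h^l\bE)\|_{\bH^s(\calT_h)}$ terms yield $O(h^k)$ directly by the approximation estimates (interpolated in $s$); and $h\|\ROT\kappa\,\ROT(\bE-\calC_h^l\bE)\|_{\bL^2(\calT_h)}$ is handled by expanding $\ROT\kappa\,\ROT = \GRAD\kappa\CROSS\ROT + \kappa\,\ROT\,\ROT$ and exploiting $\kappa\in \mathrm{W}^{1,\infty}_\Sigma(\Omega)$.

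The main technical point to watch is this last term when $k<1$, since $\bE$ is then not piecewise in $\bH^2$: I would use an inverse inequality on $\calC_h^l\bE$ combined with the a priori bound $\|\ROT(\kappa\ROT\bE)\|_{\bL^2(\Omega)}\le c\|\bg\|_{\bL^2(\Omega)}$ from Theorem~\ref{Thm:Hs_stability_bvp} to absorb $\|\kappa\,\ROT\ROT\bE\|_{\bL^2(\calT_h)}$. Apart from this bookkeeping, no genuine obstacle is expected; the argument is essentially the smooth counterpart of the proof of Theorem~\ref{thm:cvg_norme_h_min_reg} with the regularization step (and the optimal choice $\delta=h^{1-\alpha/\ell}$) removed, which is precisely why the lower bound on $\alpha$ disappears and the full range $\alpha\in[0,1]$ is admissible.
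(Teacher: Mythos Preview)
Your proposal is correct and follows essentially the same route as the paper's own proof: the same triangle-inequality split with $\calC_h^l\bE$ and $\calC_h^p p$, the same appeal to coercivity, Galerkin orthogonality and Proposition~\ref{p:cont} for the discrete piece, and the same case distinction $k<1$ versus $k\ge 1$ for the term $h\|\ROT\kappa\ROT(\bE-\calC_h^l\bE)\|_{\bL^2(\calT_h)}$ (inverse inequality on $\calC_h^l\bE$ plus the a~priori bound \eqref{Rot_kappa_Rot_E_in_L2} when $k<1$). The only cosmetic difference is that the paper handles the boundary/interface traces via the interpolation inequality \eqref{eq:L2Gamma_1} rather than elementwise trace inequalities, and it does not bother with the product-rule expansion of $\ROT(\kappa\ROT\cdot)$, but these are equivalent in effect.
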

\begin{proof}
  The proof is similar to that of
  Theorem~\ref{thm:cvg_norme_h_min_reg}. We start from the triangular
  inequality
\[
\|\bE-\bE_h,p-p_h\|_h \le \|\bE-\calC_h^l\bE,p-\calC_h^p p\|_{h}
+ \|\calC_h^l\bE-\bE_h,\calC_h^p p-p_h\|_{h},
\]
We bound the two terms in the right hand side separately. For the
first one, we use the local approximation properties of the operators
$\calC_h^l$ and $\calC_h^p$ to derive
\begin{align*}
  \|\bE-\calC_h^l\bE,p&-\calC_h^p p\|_{h}  \le c\,\Big(
  h^k\|\bE\|_{\bH^{k+1}(\Omega_\Sigma)} +
  h^{k+\alpha}\|\bE\|_{\bH^{k+1}(\Omega_\Sigma)}
  +  h^{-\frac12}\|\bE-\calC_h^l\bE\|_{\bL^2(\Sigma\cup\front)} \\
  & + h^{1-\alpha}h^{k+\alpha-1}\|p\|_{\xHn{k+\alpha}(\Omega_\Sigma)} +
  h^{\alpha-\frac12}\|\bE-\calC_h^l\bE\|_{\bL^2(\Sigma)}\Big).
\end{align*}
Using~\eqref{eq:L2Gamma_1} for any $\sigma\in\left(\frac12,1\right)$,
we have
\begin{align*}
  \|\bE-\calC_h^l\bE\|_{\bL^2(\Sigma\cup\front)} &\le c\,
  \|\bE-\calC_h^l\bE\|_{\bL^2(\Omega_\Sigma)}^{1-\frac1{2\sigma}}
\|\bE-\calC_h^l\bE\|_{\bH^\sigma(\Omega_\Sigma)}^{\frac1{2\sigma}}
  \le c\,h^{k+\frac12}\|\bE\|_{\bH^{k+1}(\Omega_\Sigma)}.
\end{align*}
As a result, we obtain
\begin{equation}\label{eq:cvg_norme_h_smooth_1}
  \|\bE-\calC_h^l\bE,p-\calC_h^p p\|_h 
  \le ch^k\left(\|\bE\|_{\bH^{k+1}(\Omega_\Sigma)}+\|p\|_{\xHn{k+\alpha}(\Omega_\Sigma)}\right).
\end{equation}
Now we turn our attention to $\|\calC_h^l\bE-\bE_h,\calC_h^p
p-p_h\|_{h}$. We use the coercivity of $a_h$, the
Galerkin orthogonality and the continuity  of $a_h$ (for any
$\sigma\in\left(0,\frac12\right)$) to get
\begin{align*}
  \|\calC_h^l\bE-\bE_h,\calC_h^p p-p_h\|_{h}
  &\le \ c\, \big(\|\bE-\calC_h^l\bE,p-\calC_h^p p\|_{h} + h^{\alpha-1}\|\bE-\calC_h^l\bE\|_{\bL^{2}(\Omega)} \\
  &+ h^{\sigma}\|\kappa\ROT(\bE-\calC_h^l\bE)\|_{\bH^\sigma(\calT_h)} \\
  &+ h\|\ROT\kappa\ROT(\bE-\calC_h^l\bE)\|_{\bL^2(\calT_h)} \\
  &+ h^{-\alpha}\|p-\calC_h^p p\|_{\xLtwo(\Omega)} +
  h^{\frac12-\alpha}\|p-\calC_h^p p\|_{\xLtwo(\Sigma)}\big).
\end{align*}
Using the approximation properties of $\calC_h^l$ together
with~\eqref{eq:cvg_norme_h_smooth_1}, we infer
\begin{align*}
  h^{\alpha-1}\|\bE-\calC_h^l\bE\|_{\bL^{2}(\Omega)}
  &\le ch^{k+\alpha}\|\bE\|_{\bH^{k+1}(\Omega_\Sigma)}, \\
  h^{\sigma}\|\kappa\ROT(\bE-\calC_h^l\bE)\|_{\bH^\sigma(\calT_h)}
  &\le ch^k\|\bE\|_{\bH^{k+1}(\Omega_\Sigma)}, \\
  h^{-\alpha}\|p-\calC_h^p p\|_{\xLtwo(\Omega)} &\le
  ch^k\|p\|_{\xHn{k+\alpha}(\Omega_\Sigma)}.
\end{align*}
For the last term involving $p$, we use~\eqref{eq:L2Gamma_1} for some
$\sigma\in\left(\frac12,1\right)$:
\begin{align*}
  h^{\frac12-\alpha}\|p-\calC_h^p p\|_{\xLtwo(\Sigma)} 
&\le ch^{\frac12-\alpha}\|p-\calC_h^p p\|_{\xLtwo(\Omega_\Sigma)}^{1-\frac1{2\sigma}}
\|p-\calC_h^p p\|_{\xHn{\sigma}(\Omega_\Sigma)}^{\frac1{2\sigma}} \\
  &\le
  ch^{\frac12-\alpha}h^{k+\alpha-\frac12}\|p\|_{\xHn{k+\alpha}(\Omega_\Sigma)}
  = ch^k\|p\|_{\xHn{k+\alpha}(\Omega_\Sigma)}.
\end{align*}
For the last term involving $\bE$, we distinguish two cases depending
whether $k<1$ or $k\ge 1$.  If $k<1$, we use an inverse inequality
together with the approximation properties of $\calC_h^l$ to deduce that
\begin{align*}
h\|\ROT\kappa\ROT(\bE-\calC_h^l\bE)\|_{\bL^2(\calT_h)} 
&\le h\|\ROT\kappa\ROT\bE\|_{\bL^2(\Omega)} + ch\|\calC_h^l\bE\|_{\bH^2(\calT_h)} \\
&\le h\|\bg\|_{\bL^2(\Omega)} + h^k\|\bE\|_{\bH^{k+1}(\Omega_\Sigma)}.
\end{align*}
If $k\ge 1$, we use the local approximation properties of $\calC_h^l$
to get
\begin{align*}
h\|\ROT\kappa\ROT(\bE-\calC_h^l\bE)\|_{\bL^2(\calT_h)} 
&\le ch\|\bE-\calC_h^l\bE\|_{\bH^2(\calT_h)} 
\le ch^k\|\bE\|_{\bH^{k+1}(\Omega_\Sigma)}.
\end{align*}
In both cases, we have:
\begin{align*}
h\|\ROT\kappa\ROT(\bE-\calC_h^l\bE)\|_{\bL^2(\calT_h)} 
&\le ch^k\left(\|\bE\|_{\bH^{k+1}(\Omega_\Sigma)}+\|\bg\|_{\bL^2(\Omega)}\right).
\end{align*}
Gathering all the above estimates and
using~\eqref{eq:cvg_norme_h_smooth_1} gives the desired
result~\eqref{eq:cvg_norme_h_smooth}.
\end{proof}

\begin{rem}
  Note that the error estimate \eqref{eq:cvg_norme_h_smooth} is
  optimal since it implies that $\|\ROT(\bE-\bE_h)\|_{\bL^2(\Omega_\Sigma)}
  \le c\, h^k$, which is the best that can be expected from
  piece-wise polynomial approximation of degree $k$.  Note also that
  there is no lower bound on $\alpha$ to get convergence when the
  solution of \eqref{bvp} is smooth, \ie any $\alpha$ in the range
  $[0,1]$ is acceptable.
\end{rem}

\subsection{Convergence in the $\bL^2$-norm.}\label{sec:L2}
Before proving that the discrete solution converges to the exact
solution in the $\bL^2$-norm, we prove a global version of
Lemma~\ref{l:cont_IP} that will be useful in the proof of
Theorem~\ref{thm:cvg_norme_l2}.
\begin{lem}\label{lem:approx_b_lambda}
  Let $s\in\left(0,\frac12\right)$. Then there exists $c>0$, uniform
  in $h$, such that the following holds, for any
  $\bpsi\in\Hcurl \cap\bH^s(\Omega)$ and any $\bF_h\in\bX_h$:
\begin{equation}\label{eq:approx_b_lambda}
  \left|\left(\bpsi,\jump{\bF_h\CROSS\bn}\right)_{\Sigma\cup\front}\right| 
  \le
  c\, h^{-\frac12}\|\jump{\bF_h\CROSS\bn}\|_{\bL^2(\Sigma\cup\front)}\;
\left(h^s\|\bpsi\|_{\bH^{s}(\Omega)}
    +h\|\ROT\bpsi\|_{\bL^{2}(\Omega)}\right).
\end{equation}
\end{lem}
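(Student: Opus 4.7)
The plan is to reduce the global statement to a face-by-face application of the local bound in Lemma~\ref{l:cont_IP}, then sum via Cauchy--Schwarz and shape regularity. First, I would fix a face $F\in\calF_h=\calF_h^i\cup\calF_h^\partial$ and apply Lemma~\ref{l:cont_IP} with $\bphi=\bpsi$ and with the polynomial equal to the restriction of $\jump{\bF_h\CROSS\bn}$ to $F$; that this restriction is a polynomial of degree at most $\ell-1$ is immediate from the definition of $\bX_h$. Quasi-uniformity of $\{\calT_h\}_{h>0}$ allows me to replace each local mesh size $h_K$ by the global $h$, giving on each $F$ a bound of the form $c\,h^{-\frac12}\|\jump{\bF_h\CROSS\bn}\|_{\bL^2(F)}\sum_i\bigl(h^s\|\bpsi\|_{\bH^s(K_i)}+h\|\ROT\bpsi\|_{\bL^2(K_i)}\bigr)$, where $K_i$ denotes the one or two cells adjacent to $F$.

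Next I would sum over $F\in\calF_h$ and use a discrete Cauchy--Schwarz inequality over the face index to factor out $\|\jump{\bF_h\CROSS\bn}\|_{\bL^2(\Sigma\cup\front)}$. The remaining face-indexed sum is converted to a cell-indexed sum using shape regularity (each cell has a uniformly bounded number of adjacent faces). The $\ROT\bpsi$-contribution aggregates cleanly since $\sum_{K\in\calT_h}\|\ROT\bpsi\|_{\bL^2(K)}^2=\|\ROT\bpsi\|_{\bL^2(\Omega)}^2$. For the fractional contribution I would invoke the Slobodeckij (double-integral) definition of the $\bH^s$-seminorm: for $s\in(0,\tfrac12)$ the integrals defining $|\bpsi|_{\bH^s(K)}^2$ live on $K\times K$ and these products are pairwise disjoint across cells, so $\sum_K|\bpsi|_{\bH^s(K)}^2\le|\bpsi|_{\bH^s(\Omega)}^2$; combined with $\sum_K\|\bpsi\|_{\bL^2(K)}^2=\|\bpsi\|_{\bL^2(\Omega)}^2$ this yields $\sum_K\|\bpsi\|_{\bH^s(K)}^2\le c\|\bpsi\|_{\bH^s(\Omega)}^2$ uniformly in $h$.

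The hard part will be to ensure that Lemma~\ref{l:cont_IP} is applied in a form producing only the two contributions $h^s\|\bpsi\|_{\bH^s(K)}$ and $h\|\ROT\bpsi\|_{\bL^2(K)}$. In the proof of Proposition~\ref{p:cont} an additional $\|\bpsi\|_{\bL^2(K)}$ term survives on the right-hand side and is absorbed by the discrete norm $\|\cdot\|_h$; no such cushion is available here. I expect this to be handled either by appealing directly to the appendix version of the lemma (which is presumably tuned to this use and produces the sharper two-term bound), or by first splitting $\bpsi=\calK_\delta\bpsi+(\bpsi-\calK_\delta\bpsi)$ via the smoother of Theorem~\ref{thm:approx_op}, applying a trace inequality to the smooth piece and the local lemma to the remainder, and then optimizing $\delta$ as a power of $h$ so as to balance the $\bL^2$ and $\bH^s$ contributions into the target right-hand side.
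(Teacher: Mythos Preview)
Your diagnosis in the last paragraph is exactly right, and your option (b) is the paper's proof. To be explicit: the appendix version of Lemma~\ref{l:cont_IP} does carry the extra term $\|\bphi\|_{\bL^2(K)}$ (this is in fact the whole point of the paper's variant versus the original in \cite{MR2263045}), so option (a) is not available and a bare face-by-face summation does not close. The paper proceeds as you outline in (b): write $\bpsi=(\bpsi-\calK_\delta\bpsi)+\calK_\delta\bpsi$; on the first piece apply Lemma~\ref{l:cont_IP} face by face and sum, absorbing the dangerous $\|\bpsi-\calK_\delta\bpsi\|_{\bL^2}$ term via \eqref{approx_delta} (valid since $\bH^s(\Omega)=\bH^s_0(\Omega)$ for $s<\tfrac12$) and controlling $h\|\ROT\calK_\delta\bpsi\|_{\bL^2}$ through \eqref{approx_exp}; on the smooth piece use Cauchy--Schwarz together with the trace interpolation \eqref{eq:L2Gamma_2} on each $\partial\Omega_i$. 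The optimal choice is simply $\delta=h$, which balances $\delta^s$, $h\delta^{s-1}$, and $h^{1/2}\delta^{s-1/2}$ to $h^s$.

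One minor remark on your summation step: since $\bF_h\in\bX_h$ is continuous inside each $\Omega_i$, the jump $\jump{\bF_h\CROSS\bn}$ vanishes on all interior faces not lying on $\Sigma\cup\front$, so the sum is effectively only over faces on $\Sigma\cup\front$; the resulting cell sums are then bounded by $\|\cdot\|_{\bH^s(\Omega_\Sigma)}$ and hence by $\|\cdot\|_{\bH^s(\Omega)}$. The paper also notes (Remark after the lemma) that the same argument works with $\calC_h^l$ in place of $\calK_\delta$.
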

\begin{proof}Let us consider $\bpsi\in\Hcurl \cap\bH^s(\Omega)$ and
  $\bF_h\in\bX_h$.  Notice that the left hand side is well defined
  owing to Lemma~\ref{l:cont_IP}.  We start from
\begin{align*}
  \big|\big(\bpsi&,
\jump{\bF_h\CROSS\bn}\big)_{\Sigma\cup\front}\big| 
  \le
\underbrace{\left|\left(\bpsi-\calK_\delta\bpsi,
\jump{\bF_h\CROSS\bn}\right)_{\Sigma\cup\front}\right|}_{:=I_1}
  +
  \underbrace{\left|\left(\calK_\delta\bpsi,
\jump{\bF_h\CROSS\bn}\right)_{\Sigma\cup\front}\right|}_{:=I_2},
\end{align*}
for some $\delta$ to be defined later. We handle the two terms $I_1$,
$I_2$ separately. For the first one, we apply Lemma~\ref{l:cont_IP}
with $\bv=\jump{\bF_h\CROSS\bn}$,
$\bphi=\bpsi-\calK_\delta\bpsi$ and $\sigma=s$, and we sum
over all the faces $F\in\Sigma\cup\front$. This leads to
\begin{align*}
I_1 &\le c\, h^{-\frac12}\|\jump{\bF_h\CROSS\bn}\|_{\bL^2(\Sigma\cup\front)}
\big( h^s\|\bpsi-\calK_\delta\bpsi\|_{\bH^s(\Omega_\Sigma)} \\
&\quad + h\|\ROT(\bpsi-\calK_\delta\bpsi)\|_{\bL^2(\Omega_\Sigma)} 
+ \|\bpsi-\calK_\delta\bpsi\|_{\bL^2(\Omega_\Sigma)}\big) \\
& \le c\, h^{-\frac12}\|\jump{\bF_h\CROSS\bn}\|_{\bL^2(\Sigma\cup\front)}
\big( h^s\|\bpsi-\calK_\delta\bpsi\|_{\bH^s(\Omega_\Sigma)} \\
&\quad + h\|\ROT\bpsi\|_{\bL^2(\Omega_\Sigma)}
+ h\|\ROT\calK_\delta\bpsi\|_{\bL^2(\Omega_\Sigma)} 
+ \|\bpsi-\calK_\delta\bpsi\|_{\bL^2(\Omega_\Sigma)}\big).
\end{align*}
Using the approximation properties of
$\calK_\delta$~\eqref{approx_delta} and \eqref{approx_exp}, we arrive
at
\begin{align*}
  I_1 & \le
  c\,h^{-\frac12}\|\jump{\bF_h\CROSS\bn}\|_{\bL^2(\Sigma\cup\front)}
  \big(h^s\|\bpsi\|_{\bH^s(\Omega_\Sigma)}\\
  &\quad + h\|\ROT\bpsi\|_{\bL^2(\Omega_\Sigma)}
  +\delta^s\|\bpsi\|_{\bH^s(\Omega_\Sigma)}
  +h\|\calK_\delta\bpsi\|_{\bH^1(\Omega_\Sigma)}\big) \\
  & \le c\,
  h^{-\frac12}\|\jump{\bF_h\CROSS\bn}\|_{\bL^2(\Sigma\cup\front)}
  \big((h^s+\delta^s+h\delta^{s-1})\|\bpsi\|_{\bH^s(\Omega_\Sigma)}+
  h\|\ROT\bpsi\|_{\bL^2(\Omega_\Sigma)}\big).
\end{align*}
We handle $I_2$ by using the Cauchy-Schwarz inequality on every
$\partial\Omega_i$, $i=1,\cdots,N$.
\begin{align*}
  I_2& \le c\,
  h^{-\frac12}\|\jump{\bF_h\CROSS\bn}\|_{\bL^2(\Sigma\cup\front)}
  \sum_{i=1}^N
  h^{\frac12}\|\calK_\delta\bpsi\|_{\bL^2(\partial\Omega_i)}.
\end{align*}
We use~\eqref{eq:L2Gamma_2} on every $\Omega_i$ with
$\Theta:=\frac{1-2s}{2(1-s)}$ to obtain
\begin{align*}
  I_2 & \le c\, h^{-\frac12}\|\jump{\bF_h\CROSS\bn}\|_{\bL^2(\Sigma\cup\front)}
\sum_{i=1}^Nh^{\frac12}\|\calK_\delta\bpsi\|_{\bH^s(\Omega_i)}^{1-\Theta} 
\|\calK_\delta\bpsi\|_{\bH^{1}(\Omega_i)}^{\Theta} \\
  & \le
  c\, h^{-\frac12}\|\jump{\bF_h\CROSS\bn}\|_{\bL^2(\Sigma\cup\front)}
  h^{\frac12}\|\calK_\delta\bpsi\|_{\bH^s(\Omega_\Sigma)}^{1-\Theta}
  \|\calK_\delta\bpsi\|_{\bH^{1}(\Omega_\Sigma)}^{\Theta},
\end{align*}
where the constant $c$ depends on $N$, which we recall is a fixed
number.  Using again the approximation properties of $\calK_\delta$ we
infer that
\begin{align*}
  I_2 & \le c\,
  h^{-\frac12}\|\jump{\bF_h\CROSS\bn}\|_{\bL^2(\Sigma\cup\front)}
  h^{\frac12}\delta^{(s-1)\Theta}\|\bpsi\|_{\bH^s(\Omega_\Sigma)} \\
  &\le
  c\, h^{-\frac12}\|\jump{\bF_h\CROSS\bn}\|_{\bL^2(\Sigma\cup\front)}
  h^{\frac12}\delta^{s-\frac12}\|\bpsi\|_{\bH^s(\Omega_\Sigma)}.
\end{align*}
Then \eqref{eq:approx_b_lambda} is obtained by gathering
the above estimates and setting $\delta=h$.
\end{proof}
\begin{rem}[Alternative Decomposition]
  Estimate \eqref{eq:approx_b_lambda} can alternatively be derived using the
  decomposition $\bpsi= \bpsi-\calC_h^l\bpsi+\calC_h^l\bpsi$ instead of
  $\bpsi= \bpsi-\calK_\delta\bpsi+\calK_\delta\bpsi$.
\end{rem}

\begin{thm}\label{thm:cvg_norme_l2}
  Let $\bg\in\bL^2(\Omega)$ and let $(\bE,p)$ be the solution
  of~\eqref{bvp}. Let $\tau<\min(\tau_\eps,\tau_\kappa)$ where
  $\tau_\eps$ and $\tau_\kappa$ are defined in
  Theorem~\ref{Thm:Hs_stability_bvp}. Let $(\bE_h,p_h)$ be
  solution of~\eqref{e:discrete_formulation}. For
  any $\alpha\in\left(\frac{\ell(1-\tau)}{\ell-\tau},1\right)$, there
  exists $c>0$, uniform in $h$, such that
\begin{equation}\label{eq:cvg_norme_l2_min_reg}
  \|\bE-\bE_h\|_{\bL^2(\Omega)} \le c\,h^{r_1+r_2}\|\bg\|_{\bL^2(\Omega)},
\end{equation}
{with $r_1:=\min\left(1-\alpha,\alpha-1
  +\tau\left(1-\frac{\alpha}{\ell}\right)\right)$ and $r_2= r_1$ if
$\DIV(\eps\bg) \ne 0$ and $r_2 = \alpha-1
+\tau\left(1-\frac{\alpha}{\ell}\right)$ if $\DIV(\eps\bg) =0$.}  If
in addition $\bE\in\bH^{k+1}(\Omega_\Sigma)$ and $p\in
\xHn{k+\alpha}(\Omega_\Sigma)$ for some $0<k<\ell-1$, then the following
holds:
\begin{equation}\label{eq:cvg_norme_l2_smooth}
\|\bE-\bE_h\|_{\bL^2(\Omega)} \le c\, h^{k+r_1}\left(\|\bg\|_{\bL^2(\Omega)}
+\|\bE\|_{\bH^{k+1}(\Omega_\Sigma)} + \|p\|_{\xHn{k+\alpha}(\Omega_\Sigma)} \right).
\end{equation}
\end{thm}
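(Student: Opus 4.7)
The proof will follow a Nitsche-Aubin duality argument combining Proposition~\ref{prop:cont_bis} with Theorem~\ref{thm:cvg_norme_h_min_reg} applied to a dual problem. Introduce $(\boldsymbol{\psi},\phi)\in\Hzcurl\CROSS\xHone_0(\Omega)$ as the unique solution of
\begin{equation*}
\ROT(\kappa\ROT\boldsymbol{\psi}) + \eps\GRAD\phi = \bE-\bE_h, \qquad \DIV(\eps\boldsymbol{\psi}) = 0,
\end{equation*}
together with $\boldsymbol{\psi}\CROSS\bn|_\front=0$ and $\phi|_\front=0$. Writing the source as $\eps\bg^{\star}$ with $\bg^{\star}:=(\bE-\bE_h)/\eps\in\bL^2(\Omega)$, Theorem~\ref{Thm:Hs_stability_bvp} provides the a priori regularity $\boldsymbol{\psi}\in\bH^{\tau}(\Omega)\cap\bZ^{\tau}$ together with the estimate
\begin{equation*}
\|\boldsymbol{\psi}\|_{\bH^{\tau}(\Omega)} + \|\ROT\boldsymbol{\psi}\|_{\bH^{\tau}(\Omega)} + \|\ROT(\kappa\ROT\boldsymbol{\psi})\|_{\bL^2(\Omega)} + \|\GRAD\phi\|_{\bL^2(\Omega)} \le c\|\bE-\bE_h\|_{\bL^2(\Omega)}.
\end{equation*}

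Testing the dual equation against $\bE-\bE_h$ and integrating by parts (globally on $\Omega$ for the $\bE$ contribution, using $\bE\CROSS\bn|_\front=0$ and $\DIV(\eps\bE)=0$; piecewise on each $\Omega_i$ for the $\bE_h$ contribution, using that $\ROT(\kappa\ROT\boldsymbol{\psi})\in\bL^2(\Omega)$ implies continuity of the tangent trace of $\kappa\ROT\boldsymbol{\psi}$ across $\Sigma$, together with $\DIV(\eps\boldsymbol{\psi})=0$ and $\jump{\boldsymbol{\psi}\CROSS\bn}_\Sigma=\jump{\eps\boldsymbol{\psi}\SCAL\bn}_\Sigma=0$) one obtains the adjoint consistency identity
\begin{equation*}
\|\bE-\bE_h\|_{\bL^2(\Omega)}^2 = a_h\bigl((\bE-\bE_h,p-p_h),(\boldsymbol{\psi},\phi)\bigr) + R_h,
\end{equation*}
where the residual $R_h$ collects the symmetry defect (absent when $\theta=1$) and the divergence-stabilization perturbation $c_\alpha h^{2(1-\alpha)}(\eps\GRAD(p-p_h),\GRAD\phi)_\Omega$; both contributions are bounded by $ch^{1-\alpha}\|\bE-\bE_h,p-p_h\|_h\,\|\bE-\bE_h\|_{\bL^2(\Omega)}$, and are thus absorbed into the target bound since $r_1\le 1-\alpha$.

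Let $\boldsymbol{\psi}_h:=\calC_h^g\calK_\delta\boldsymbol{\psi}\in\bY_h$ and $\phi_h:=\calC_h^p\phi\in M_h$, with $\delta=h^{1-\alpha/\ell}$ as in the proof of Theorem~\ref{thm:cvg_norme_h_min_reg}. The Galerkin orthogonality~\eqref{galerkin_ortho} yields
\begin{equation*}
a_h\bigl((\bE-\bE_h,p-p_h),(\boldsymbol{\psi},\phi)\bigr) = a_h\bigl((\bE-\bE_h,p-p_h),(\boldsymbol{\psi}-\boldsymbol{\psi}_h,\phi-\phi_h)\bigr),
\end{equation*}
and since $\boldsymbol{\psi}_h\in\bY_h\subset\bX_h\cap\bH^1_0(\Omega)$, the adjoint continuity estimate~\eqref{cont_bis} of Proposition~\ref{prop:cont_bis} bounds the right-hand side by $c\|\bE-\bE_h,p-p_h\|_h$ times the six-term dual error appearing on the right-hand side of~\eqref{cont_bis}. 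Each of these six terms is then bounded by reproducing the term-by-term analysis carried out in the proof of Theorem~\ref{thm:cvg_norme_h_min_reg} with $(\boldsymbol{\psi},\phi)$ in the role of $(\bE,p)$; the commuting property~\eqref{eq:commute_K} is crucial in controlling $h\|\ROT(\kappa\ROT(\boldsymbol{\psi}-\boldsymbol{\psi}_h))\|_{\bL^2(\calT_h)}$ without requiring $\boldsymbol{\psi}\in\bH^{1+\tau}(\Omega)$. Since $\DIV(\eps\bg^{\star})=\DIV(\bE-\bE_h)$ is generically nonzero, the dual analysis produces the slower rate $ch^{r_1}\|\bE-\bE_h\|_{\bL^2(\Omega)}$. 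Combining with the primal bound $\|\bE-\bE_h,p-p_h\|_h\le ch^{r_2}\|\bg\|_{\bL^2(\Omega)}$ of Theorem~\ref{thm:cvg_norme_h_min_reg} and dividing through by $\|\bE-\bE_h\|_{\bL^2(\Omega)}$ yields~\eqref{eq:cvg_norme_l2_min_reg}; substituting the smooth primal bound of Theorem~\ref{thm:cvg_norme_h_smooth} gives~\eqref{eq:cvg_norme_l2_smooth}.

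The main technical obstacle is the rigorous derivation of the adjoint consistency identity: because $\boldsymbol{\psi}$ has only $\bH^{\tau}(\Omega)$ regularity with $\tau$ possibly below $\frac12$, the interface and boundary integrations of $\kappa\ROT\boldsymbol{\psi}$ against piecewise smooth test fields must be justified through Lemma~\ref{l:cont_IP}, and careful bookkeeping of $R_h$ is required to ensure the residual does not contaminate the target rate $h^{r_1+r_2}$.
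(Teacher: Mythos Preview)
Your approach is essentially the same as the paper's: Nitsche--Aubin duality, Galerkin orthogonality, the approximation $\calC_h^g\calK_\delta\boldsymbol{\psi}\in\bY_h$, Proposition~\ref{prop:cont_bis}, and then recycling the term-by-term estimates of Theorem~\ref{thm:cvg_norme_h_min_reg} on the dual side. The paper sets up the dual problem with source $\eps(\bE-\bE_h)$ and the opposite sign on the pressure gradient, which leads to the residual appearing as $c_\alpha h^{2(1-\alpha)}(\eps(\bE-\bE_h),\GRAD(p-p_h))_\Omega$ rather than your $c_\alpha h^{2(1-\alpha)}(\eps\GRAD(p-p_h),\GRAD\phi)_\Omega$; these are equivalent through the identity $(\eps\GRAD\phi,\GRAD\varphi)_\Omega=-(\eps(\bE-\bE_h),\GRAD\varphi)_\Omega$, so this is cosmetic.

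There is one genuine gap in your outline. You assert that the symmetry defect $(1-\theta)(\kappa\ROT\boldsymbol{\psi},\jump{\bE_h\CROSS\bn})_{\Sigma\cup\front}$ is bounded by $ch^{1-\alpha}\|\bE-\bE_h,p-p_h\|_h\|\bE-\bE_h\|_{\bL^2(\Omega)}$, but Lemma~\ref{l:cont_IP} alone does not give this: summing the local estimate over faces leaves an undamped term $\|\kappa\ROT\boldsymbol{\psi}\|_{\bL^2(\Omega_\Sigma)}$ with no power of $h$, which would destroy the gain from duality. The paper handles this via Lemma~\ref{lem:approx_b_lambda}, a \emph{global} refinement of Lemma~\ref{l:cont_IP} obtained by splitting $\kappa\ROT\boldsymbol{\psi}=(\kappa\ROT\boldsymbol{\psi}-\calK_\delta(\kappa\ROT\boldsymbol{\psi}))+\calK_\delta(\kappa\ROT\boldsymbol{\psi})$ and using the smoothing operator to trade the raw $\bL^2$ term for $h^\tau\|\kappa\ROT\boldsymbol{\psi}\|_{\bH^\tau(\Omega)}+h\|\ROT(\kappa\ROT\boldsymbol{\psi})\|_{\bL^2(\Omega)}$. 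With that lemma the defect is bounded by $ch^\tau\|\bE-\bE_h,p-p_h\|_h\|\bE-\bE_h\|_{\bL^2(\Omega)}$, and since $r_1<\tau$ this is absorbed. You should invoke (or reprove) Lemma~\ref{lem:approx_b_lambda} rather than Lemma~\ref{l:cont_IP} at this step; your closing remark that ``careful bookkeeping of $R_h$ is required'' is precisely where this extra ingredient enters.
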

\begin{proof}
  We are going to use a duality argument \`a la Nitsche-Aubin.  In the
  following we denote $a_h^1$ the extension to
  $\left[(\bZ^\tau(\Omega)+\bX_h)\CROSS \xHone_0(\Omega)\right]^2$ of the
  bilinear form defined on $\left[\bX_h\CROSS M_h\right]^2$
  in~\eqref{def_a_h} with $\theta=1$. Then the following symmetry
  property holds:
\begin{align*}
a_h^1\left((\bF,q),(\bG,r)\right) =
  a_h^1\left((\bG,-r),(\bF,-q)\right).
\end{align*}
for all $((\bF,q),(\bG,r))\in\left[(\bZ^\tau(\Omega)+\bX_h)\CROSS
  \xHone_0(\Omega)\right]^2$.  Let $(\bw,q)\in\Hzcurl\CROSS
\xHone_0(\Omega)$ be the solution of the following (adjoint) problem:
\begin{equation}
\ROT(\kappa\ROT\bw) - \eps\GRAD q = \eps\left(\bE-\bE_h\right), \qquad
\DIV(\eps \bw)=0. \label{adjoint_problem}
\end{equation}
Recall that Theorem~\ref{Thm:Hs_stability_bvp} implies that
$\bw\in\bZ^\tau(\Omega)\cap\bH^\tau(\Omega)$ and that
\begin{align}
  \|\bw\|_{\bH^\tau(\Omega)}+\|\kappa\ROT\bw\|_{\bH^\tau(\Omega)}+
  \|\ROT\kappa\ROT\bw\|_{\bL^2(\Omega)} &\le c\,
  \|\bE-\bE_h\|_{\bL^2(\Omega)}. \label{eq:prop_w_1}
\end{align}
\bal{Upon testing
  \eqref{adjoint_problem} with $\bE-\bE_h$, using the definition of
  $a_h^1$ in \eqref{def_a_h}, and recalling that $\DIV(\eps\bw)=0$ and both the
  tangential jump of $\bw$ across $\Sigma$ and the tangential trace of
  $\bw$ on $\front$ are zero, we obtain the following identity:} 
\begin{align*}
  \|\eps^{\frac12}(\bE&-\bE_h)\|_{\bL^2(\Omega)}^2 =
  a_h^1\left((\bw,-q),(\bE-\bE_h,p_h-p)\right)
  + c_\alpha h^{2(1-\alpha)} \left(\eps\GRAD q,\GRAD(p_h-p)\right)_\Omega
\end{align*}
The definition of the pair $(\bw,q)$ implies that
$(\eps\GRAD q,\GRAD\varphi)_\Omega
=-(\eps(\bE-\bE_h,\GRAD\varphi)\bal{)_\Omega}$
  for all $\varphi\in \xHone_0(\Omega)$; hence,
\begin{align*}
   \|\eps^{\frac12}(\bE-\bE_h)\|_{\bL^2(\Omega)}^2 & =a_h^1\left((\bE-\bE_h,p-p_h),(\bw,q)\right)
  + c_\alpha h^{2(1-\alpha)}\left(\eps(\bE-\bE_h),\GRAD(p-p_h)\right)_\Omega \\
  & = a_h\left((\bE-\bE_h,p-p_h),(\bw,q)\right)
  +c_\alpha h^{2(1-\alpha)}\left(\eps(\bE-\bE_h),\GRAD(p-p_h)\right)_\Omega \\
  & \quad +(1-\theta)\left(\mean{\kappa\ROT\bw},
    \jump{-\bE_h\CROSS\bn}\right)_{\Sigma\cup\front}.
\end{align*}
We now use the Galerkin
orthogonality and we introduce the global approximation
$\calC_h^g\calK_\delta\bw$, with $\delta=h^{1-\frac{\alpha}{\ell}}$,
and the pressure approximation $\calC_h^p q$:
\begin{multline}
  \|\eps^{\frac12}(\bE-\bE_h)\|_{\bL^2(\Omega)}^2
  = a_h\left((\bE-\bE_h,p-p_h),(\bw-\calC_h^g\calK_\delta\bw,q-\calC_h^p q)\right) \\
  + c_\alpha
  h^{2(1-\alpha)}\left(\eps(\bE-\bE_h),\GRAD(p-p_h)\right)_\Omega -
  (1-\theta)\left(\kappa\ROT\bw,\jump{\bE_h\CROSS\bn}\right)_{\Sigma\cup\front}.
\label{provisional_L2_estimate}
\end{multline}
Note that we replaced $\mean{\kappa\ROT\bw}$ by $\kappa\ROT\bw$ since
the tangent component of $\kappa\ROT\bw$ is continuous across the
interfaces owing to $\ROT(\kappa\ROT\bw)\in \bL^2(\Omega)$.

We now handle the three terms in the right hand side separately. For
the first one, we use Proposition~\ref{prop:cont_bis} with
$s=1-\alpha$, $\bF=\bw$ and $\bF_h=\calC_h^g\calK_\delta\bw$ (note
that $\bF_h\in \bY_h\subset \bX_h \cap \Hzcurl$); we then infer that
\begin{align*}
  \big|a_h\big((\bE-\bE_h&, p-p_h),(\bw-\calC_h^g\calK_\delta\bw,q-\calC_h^p q)\big)\big| \le \\
  &c\, \|\bE-\bE_h,p-p_h\|_h\big(\|\bw-\calC_h^g\calK_\delta\bw,q-\calC_h^p q\|_h \\
  &+ h^{\alpha-1}\|\bw-\calC_h^g\calK_\delta\bw\|_{\bL^2(\Omega)} +
  h^{-\alpha}\|q-\calC_h^p q\|_{\xLtwo(\Omega)} +
  h^{\frac12-\alpha}\|q-\calC_h^p q\|_{\xLtwo(\Sigma)}
\\
  &+ h\|\ROT\kappa\ROT(\bw-\calC_h^g\calK_\delta\bw)\|_{\bL^2(\calT_h)} 
+h^{1-\alpha}\|\ROT(\bw-\calC_h^g\calK_\delta\bw)\|_{\bH^{1-\alpha}(\Omega)}\big).
\end{align*}
The \bal{term in parentheses on the right-hand side is estimated as} in the proof of
Theorem~\ref{thm:cvg_norme_h_min_reg}. We then have
\begin{equation}\label{eq:cvg_norme_l2_1}
\begin{aligned}
  \big|a_h\big((\bE-\bE_h,p-p_h),&(\bw-\calC_h^g\calK_\delta\bw,\calC_h^p q-q)\big)\big| \\
  & \le c\, \|\bE-\bE_h,p-p_h\|_h h^{r_1}\|\bE-\bE_h\|_{\bL^2(\Omega)}.
\end{aligned}
\end{equation}
The second term in \eqref{provisional_L2_estimate} is estimated by
using the Cauchy-Schwarz inequality, the definition of the norm
$\|\cdot\|_h$ and the inequality $r_1\le 1-\alpha$,
\begin{align}
  \left|h^{2(1-\alpha)}\left(\eps(\bE-\bE_h),\GRAD(p-p_h)\right)_\Omega\right|
  &\le c \, h^{1-\alpha}\|\GRAD(p-p_h)\|_{\bL^2(\Omega)} h^{1-\alpha}\|\bE-\bE_h\|_{\bL^2(\Omega)} \nonumber \\
  &\le c\, \|\bE-\bE_h,p-p_h\|_h
  h^{r_1}\|\bE-\bE_h\|_{\bL^2(\Omega)}. \label{eq:cvg_norme_l2_2}
\end{align}
The last term in \eqref{provisional_L2_estimate} is estimated by using
Lemma~\ref{lem:approx_b_lambda} with $\bpsi:=\kappa\ROT\bw$ and
$s:=\tau$:
\begin{align}
  \big|(1-\theta)\big(\kappa\ROT\bw&,
\jump{\bE_h\CROSS\bn}\big)_{\Sigma\cup\front}\big| \nonumber \\
  &\le c\, \|\bE-\bE_h,\bal{p-p_h}\|_h \left(h^\tau\|\kappa\ROT\bw\|_{\bH^{\tau}(\Omega)}
+h\|\ROT(\kappa\ROT\bw)\|_{\bL^{2}(\Omega)}\right)\nonumber \\
  &\le c\,
  \|\bE-\bE_h,\bal{p-p_h}\|_h\, h^{r_1}\|\bE-\bE_h,\bal{p-p_h}\|_{\bL^2(\Omega)},\label{eq:cvg_norme_l2_3}
\end{align}
where we have used~\eqref{eq:prop_w_1} and $r_1\le \frac{\tau}{2}<\tau$.
Upon inserting
\eqref{eq:cvg_norme_l2_1}-\eqref{eq:cvg_norme_l2_2}-\eqref{eq:cvg_norme_l2_3}
in \eqref{provisional_L2_estimate} we obtain
\[
\|\eps^{\frac12}(\bE-\bE_h)\|_{\bL^2(\Omega)}^2
\le ch^{r_1}\|\bE-\bE_h\|_{\bL^2(\Omega)} \|\bE-\bE_h,p-p_h\|_h.
\]
Owing to the uniform positivity of $\eps$, this leads to:
\[
\|\bE-\bE_h\|_{\bL^2(\Omega)} \le ch^{r_1}\|\bE-\bE_h,p-p_h\|_h.
\]
Now we consider two cases.  Assuming only minimal regularity,
Theorem~\ref{thm:cvg_norme_h_min_reg} gives a bound on
$\|\bE-\bE_h,p-p_h\|_h$ that leads to \eqref{eq:cvg_norme_l2_min_reg}.
If $\bE$ and $p$ are piecewise smooth, then we can apply
Theorem~\ref{thm:cvg_norme_h_smooth} and we obtain
\eqref{eq:cvg_norme_l2_smooth}.
\end{proof}

\begin{rem}\label{rem:optimality}
  Let $\tau\in(0,\frac12)$ and denote $(\bE,p)$ the solution of
  \eqref{bvp}. Assume that $\bE\in\bH^{\tau}(\Omega)$ and
  $\bE\notin\bH^{\tau^+}(\Omega)$ for all $\tau^+>\tau$. Then,
  irrespective of the value of $\DIV(\eps\bg)$, the best choice for
  $\alpha$ is $\alpha = \frac{\ell(2-\tau)}{2\ell-\tau}$, which gives
  the convergence rate $r_1+r_2 =
  \tau\frac{\ell-1}{\ell-\frac{\tau}{2}}$; this convergence rate
  approaches the optimal rate, $\tau$, when the approximation degree
  $\ell$ is large. Note also that $\alpha$ is close to $1$ when $\ell$
  is large.
\end{rem}

\begin{rem}
  Note that the degree of the polynomials used for $M_h$ is not
  involved in the convergence rate when minimal regularity is
  assumed. This means that we can use different degrees of polynomials
  for $\bX_h$ and $M_h$, and that it is sufficient to take polynomials
  of degree $1$ for $M_h$ to get convergence.
\end{rem}

\subsection{Numerical illustrations}
In this section we illustrate numerically the performance of the
method on a boundary value problem on the $L$-shaped domain
\[
\Omega=(-1,1)^2\backslash\left([0,+1]\CROSS[-1,0]\right).
\] 
We assume that $\Omega$ is composed of three subdomains:
\[
\Omega_1 = (0,1)^2,\qquad \Omega_2 = (-1,0)\CROSS(0,1),\qquad \Omega_3
= (-1,0)^2.
\]
We use $\kappa\equiv1$ in $\Omega$, $\eps_{|\Omega_2} = 1$ and
$\eps_{|\Omega_1} = \eps_{|\Omega_3} =:\eps_r$. Denoting $\lambda>0$ a
real number such that $
\tan\left(\frac{\lambda\pi}{4}\right)\tan\left(\frac{\lambda\pi}{2}\right)
=\eps_r, $ we define the scalar potential $S_\lambda(r,\vartheta) =
r^{\lambda}\phi_{\lambda}(\vartheta)$, where $(r,\vartheta)$ are the polar
coordinates, and $\phi_\lambda$ is defined by
\[
  \phi_\lambda(\vartheta) = 
  \begin{cases} \sin(\lambda\vartheta) &\textnormal{ if
    }0\le\vartheta<\frac{\pi}{2}, \\
    \frac{\sin\left(\frac{\lambda}{2}\pi\right)}
    {\cos\left(\frac{\lambda}{4}\pi\right)}
    \cos\left(\lambda\left(\vartheta-\frac34 \pi\right)\right)
    &\textnormal{ if
    }\frac{\pi}{2}\le\vartheta<\pi,\\
    \sin\left(\lambda\left(\tfrac{3}{2}\pi-\vartheta\right)\right)
    &\textnormal{ if }\pi\le\vartheta\le\frac{3\pi}{2}.
\end{cases}
\]
Then we solve the problem
\begin{equation}
\ROT\ROT\bE = 0,\qquad \DIV(\eps\bE) 
= 0,\qquad \bE\CROSS\bn_{|\partial\Omega} = \GRAD S_\lambda\CROSS\bn.
\label{bvp_test}
\end{equation}
The exact solution is
$\bE = \GRAD S_\lambda {\in\bH^\lambda(\Omega)}$.  \bal{We present in
  Table~\ref{Table1} two series of simulations done with the
  two-dimensional version of SFEMaNS, see \eg \cite{GLLNR11}, with
  $\polP_1$ finite elements on quasi-uniform triangular Delaunay
  meshes; \ie $\ell=2$ in \eqref{def_of_X_h}. All the technical
  assumptions made in the paper are met: \eqref{Hyp:eps_mu} hold and the
  meshes are quasi-uniform and $H^1$-conforming.} We use $\lambda=0.535$
in Table~\ref{tab:cvg_bvp_0.535} and $\lambda=0.24$ in
Table~\ref{tab:cvg_bvp_0.24}, which gives $\eps_r\simeq0.5$ and
$\eps_r\simeq7.55\ 10^{-2}$, respectively.  The relative error in the
$\bL^2$-norm is reported in the column ``rel. err.''  and the
convergence rate is reported in the column ``coc''. Several values of
$\alpha$ are used to evaluate the effect of $\lambda$ and $\alpha$ on
the convergence rates.  We observe that the convergence rate is
quasi-optimal when $\alpha$ is close to $1$, which is consistent with
Remark~\ref{rem:s_less_14}, since \eqref{bvp_test} can be re-written
in the form \eqref{bvp} with $\DIV(\eps\bg)=0$.

\begin{table}[h]\caption{$\bL^2$-errors and convergence rates {with $\ell=2$}. The
    convergence rates are almost optimal for $\alpha=0.9$ in both
    cases. }
\label{Table1}\vspace{-.5\baselineskip}
\centering
\subtable[$\bL^2$-errors and convergence rates for $\lambda=0.535$]{%
\begin{tabular}{|c||c|c||c|c||c|c||c|c|}\hline
\multirow{2}{*}{$h$} & \multicolumn{2}{c||}{$\alpha=0.4$} 
& \multicolumn{2}{c||}{$\alpha=0.6$} & \multicolumn{2}{c||}{$\alpha=0.9$} & \multicolumn{2}{c|}{$\alpha=1.0$}\\
\hhline{~--------}
       &rel. err. &coc  &rel. err. &coc &rel. err. &coc &rel. err. &coc\\ \hline
0.2    & 2.332E-1 &  -  & 1.444E-1 &  - & 1.249E-1 & -  &1.297E-1  & - \\
0.1    & 2.473E-1 &-0.08& 1.168E-1 &0.31& 8.846E-2 &0.50&9.167E-2  & 0.50 \\
0.05   & 2.631E-1 &-0.09& 9.452E-2 &0.31& 6.186E-2 &0.52&6.392E-2  & 0.52 \\
0.025  & 2.797E-1 &-0.09& 7.700E-2 &0.30& 4.289E-2 &0.53&4.427E-2  & 0.53 \\
0.0125 & 2.968E-1 &-0.09& 6.312E-2 &0.29& 2.962E-2 &0.53&3.059E-2  & 0.53 \\
\hline
\end{tabular}\label{tab:cvg_bvp_0.535}
}\\
\subfigure[$\bL^2$-errors and convergence rates for $\lambda=0.24$]{
\begin{tabular}{|c||c|c||c|c||c|c||c|c|}\hline
\multirow{2}{*}{$h$} & \multicolumn{2}{c||}{$\alpha=0.4$} 
& \multicolumn{2}{c||}{$\alpha=0.6$} & \multicolumn{2}{c||}{$\alpha=0.9$} & \multicolumn{2}{c|}{$\alpha=1.0$}\\
\hhline{~--------}
       &rel. err. &coc  &rel. err. &coc &rel. err. &coc  &rel. err. &coc   \\ \hline
0.2    & 5.773E-1 & -   & 4.739E-1 &  -  & 4.426E-1 &  - &4.495E-1  & -    \\
0.1    & 6.209E-1 &-0.11& 4.507E-1 &0.07& 3.801E-1 &0.22 &3.838E-1  & 0.23 \\
0.05   & 6.711E-1 &-0.11& 4.413E-1 &0.03& 3.259E-1 &0.22 &3.272E-1  & 0.23 \\
0.025  & 7.180E-1 &-0.10& 4.452E-1 &-0.01& 2.788E-1 &0.23&2.788E-1  & 0.23 \\
0.0125 & 7.564E-1 &-0.08& 4.602E-1 &-0.05& 2.380E-1 &0.23&2.376E-1  & 0.23 \\
\hline
\end{tabular}\label{tab:cvg_bvp_0.24}
}
\end{table}
 {It has been pointed out in the literature (see \eg
  \cite[\S8.3.1]{CosDau02}, \cite{MR2485453}, \cite{MR2914268}) that
  it is possible to build special meshes allowing the existence of
  $\xCone$ interpolation operators, \ie it is possible to represent
  gradients on these meshes with optimal approximation properties.  We
  now investigate theses possibilities with $\polP_1$ and $\polP_2$
  finite elements. We solve again the above boundary value problem
  with $\lambda=0.535$ and $\alpha=0.9$. For the $\polP_1$
  approximation, we construct Powell-Sabin type meshes (see
  \cite{PowSa:77}) and compare the results obtained on these meshes
  with those obtained on generic Delaunay meshes (see
  Table~\ref{tab:cvg_bvp_powell_sabin}). We indeed observe an improvement since now the
  convergence rate is optimal, \ie close to $0.535$.  For the
  $\polP_2$ approximation we construct Hsieh-Clough-Tocher meshes, see
  \cite[item 4, p.~520]{HCT_1966}. It is possible to construct on
  these meshes $\polP_3$ finite element spaces containing $\xCone$
  functions with optimal approximation properties. Then, the standard
  vector-valued $\polP_2$ finite element spaces constructed on these
  meshes contains enough gradients. We compare the results obtained on
  Hsieh-Clough-Tocher meshes with those obtained on generic Delaunay
  meshes (see Table~\ref{tab:cvg_bvp_hct}). We do not observe any significant
  improvement, since the optimal order was already numerically
  achieved on the generic Delaunay meshes.}
\begin{table}[h]\caption{$\bL^2$-errors and convergence rates for $\lambda=0.535$, $\alpha=0.9$ on different kinds of meshes }
\label{Table2}\vspace{-.5\baselineskip}
\centering
\subtable[$\bL^2$-errors and convergence rates for $\ell=1$]{%
\begin{tabular}{|c||c|c||c|c|}\hline
\multirow{2}{*}{$h$}&\multicolumn{2}{c||}{Delaunay mesh}&\multicolumn{2}{c|}{Powell-Sabin mesh} \\
\hhline{~----}
       & rel.err    & coc  & rel. err  & coc  \\ \hline
0.2    &  2.166E-1  &  -   & 1.742E-1  &  -   \\
0.1    &  1.652E-1  & 0.39 & 1.246E-1  & 0.48 \\
0.05   &  1.268E-1  & 0.38 & 8.711E-2  & 0.52 \\
0.025  &  9.821E-2  & 0.37 & 6.052E-2  & 0.53 \\
0.0125 &  7.758E-2  & 0.34 & 4.200E-2  & 0.53 \\ \hline
\end{tabular}\label{tab:cvg_bvp_powell_sabin}
}

\subtable[$\bL^2$-errors and convergence rates for $\ell=2$]{%
\begin{tabular}{|c||c|c||c|c|}\hline
\multirow{2}{*}{$h$}&\multicolumn{2}{c||}{Delaunay mesh}&\multicolumn{2}{c|}{Hsieh-Clough-Tocher mesh} \\
\hhline{~----}
 & rel.err & coc & rel. err & coc \\ \hline
0.2    & 1.297E-1 &  -   & 1.359E-1 &  -   \\
0.1    & 9.167E-2 & 0.50 & 9.446E-2 & 0.53 \\
0.05   & 6.392E-2 & 0.52 & 6.535E-2 & 0.53 \\
0.025  & 4.427E-2 & 0.53 & 4.515E-2 & 0.53 \\
0.0125 & 3.059E-2 & 0.53 & 3.117E-2 & 0.53 \\ \hline
\end{tabular}\label{tab:cvg_bvp_hct}

}
\end{table}

\section{Eigenvalue problem}\label{s:eigen}
We extend in this section the theory introduced above to eigenvalue
problems. We want to establish an approximation result for the
solutions to the following problem: Find
$(\bE,\lambda)\in[\Hzcurl\cap\Hdiveps]\CROSS\Real$ such that
\begin{equation}
\ROT\kappa\ROT\bE = \lambda\eps\bE.
\end{equation}
We restrict ourselves in the rest of this section to the symmetric
variant of the bilinear form $a_h$ defined in \eqref{def_a_h}, \ie we
set $\theta=1$.  We finally assume from now on that $\alpha$ is chosen
as in Theorem~\ref{thm:cvg_norme_h_min_reg}, \ie
\begin{equation}
\alpha\in\left(\frac{\ell(1-\tau)}{\ell-\tau},1\right), \label{alpha_range_for_eigen}
\end{equation} 
where $\tau$
is the minimal regularity index of the problem~\eqref{bvp} as defined
in Theorem~\ref{Thm:Hs_stability_bvp}. In the following we set
$r:=\min\left(1-\alpha,\alpha-1+\tau\left(1-\frac\alpha\ell\right)\right)$.

\subsection{Framework}
Let us equip $\bL^2(\Omega)$ with the inner product
$(\bef,\bg)_\eps:=\int_\Omega\eps \bef\SCAL\bg$. This inner product is
equivalent to the usual $\bL^2$-inner product owing
to~\eqref{Hyp:eps_mu}. The associated norm is denoted
$\|\cdot\|_{\eps}$.

For any $\bg\in\bL^2(\Omega)$, we denote $(\bE,p)$ the solution
of~\eqref{bvp} and we set $A\bg:=\bE$. This defines an operator
$A:\bL^2(\Omega)\rightarrow\bL^2(\Omega)$ that is self-adjoint and
compact (\cf Theorem~\ref{Thm:Hs_stability_bvp}).  We now define two
families of discrete operators $\calE_h: \bL^2(\Omega)\longrightarrow
\bX_h$ and $\calP_h: \bL^2(\Omega)\longrightarrow M_h$ so that for any
$\bg\in\bL^2(\Omega)$, the pair $(\calE_h \bg,\calP_h \bg)$ solves
\eqref{e:discrete_formulation}. Then we finally define
\begin{equation}\begin{aligned}
A_h:\bL^2(\Omega) & \longrightarrow \bX_h + \GRAD M_h \subset
\bL^2(\Omega) \\
\bg & \longmapsto \calE_h \bg - c_\alpha h^{2(1-\alpha)} \GRAD \calP_h \bg.
\end{aligned}
\end{equation} 
\bal{We want to study whether the eigenvalues and eigenspaces spaces of $A_h$ converges to those 
of $A$. For this purpose we are going to use the following result:}
\begin{thm}[Spectral correctness \cite{Bab_Osb_91,Osborn75}]\label{thm:Osborn}
Let $X$ be an Hilbert space and $A:X\rightarrow X$ be a self-adjoint
compact operator. Let $\Theta=\{h_n;\ n\in\polN\}$ be a discrete
subset of $\Real$ such that $h_n\rightarrow 0$ as
$n\rightarrow+\infty$. Assume that there exists a family of operators
$A_h:X\rightarrow X$, $h\in \Theta$, such that:\begin{itemize}
\item  $A_h$ is a linear self-adjoint operator, for all $h\in\Theta$.
\item $A_h$ converges pointwise to $A$.
\item The family is collectively compact.
\end{itemize}
Let $\mu$ be an eigenvalue of $A$ of multiplicity $m$ and let
$\{\phi_j\}$, ${j=1,\cdots,m}$ be a set of associated orthonormal
eigenvectors.
\begin{enumerate}[(i)]
\item For any $\epsilon>0$ such that the disk $B(\mu,\epsilon)$
  contains no other eigenvalues of $A$, there exists $h_\epsilon$ such
  that, for all $h<h_\epsilon$, $A_h$ has exactly $m$ eigenvalues
  (repeated according to their multiplicity) in the disk
  $B(\mu,\epsilon)$.
\item In addition, for $h<h_\epsilon$, if we denote $\mu_{h,j}$,
  $j=1,\cdots,m$ the set of the eigenvalues of $A_h$ in
  $B(\mu,\epsilon)$, there exists $c>0$ such that
\begin{equation}\label{eq:Osborn}
c|\mu-\mu_{h,j}|\le\sum_{j,l=1}^m|\left((A-A_h)\phi_j,\phi_l\right)_X|+\sum_{j=1}^m\|(A-A_h)\phi_j\|_X^2.
\end{equation}
\end{enumerate}
\end{thm}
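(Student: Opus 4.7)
The plan is to follow the classical Babu\v{s}ka--Osborn spectral approximation argument, organized around the Riesz projector and the functional calculus. First I would fix $\epsilon>0$ so small that $\mu$ is the only element of $\sigma(A)$ inside $\overline{B(\mu,\epsilon)}$, and set $\Gamma:=\partial B(\mu,\epsilon)$. The cornerstone is to upgrade the pointwise convergence $A_h\to A$ to uniform convergence of resolvents on $\Gamma$. From the identity $(z-A_h)=(z-A)(I-(z-A)^{-1}(A-A_h))$, this reduces to showing $\|(z-A)^{-1}(A-A_h)\|_{\mathcal{L}(X)}\to 0$ uniformly in $z\in\Gamma$. Pointwise convergence alone is insufficient, but collective compactness of $\{A_h\}$ together with compactness of $A$ makes the family $\{(A-A_h)(z-A)^{-1}\}_{z\in\Gamma,h\in\Theta}$ collectively compact and pointwise null; Anselone's lemma then promotes this to uniform operator-norm convergence, so that $z\notin\sigma(A_h)$ for every $z\in\Gamma$ and $h$ small, and $(z-A_h)^{-1}\to(z-A)^{-1}$ uniformly on $\Gamma$.

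With uniform resolvent convergence in hand, I would define the Riesz projectors
\[
P:=\frac{1}{2\pi i}\oint_\Gamma(z-A)^{-1}\,dz,
\qquad
P_h:=\frac{1}{2\pi i}\oint_\Gamma(z-A_h)^{-1}\,dz,
\]
which satisfy $\|P-P_h\|_{\mathcal{L}(X)}\to 0$. Since two projectors within operator-norm distance less than $1$ must have ranges of equal dimension, $\mathrm{rank}(P_h)=\mathrm{rank}(P)=m$ for $h$ small, and self-adjointness of $A_h$ identifies $\mathrm{Range}(P_h)$ with the span of eigenvectors of $A_h$ whose eigenvalues lie in $B(\mu,\epsilon)$, counted with multiplicity. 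This yields part (i).

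For part (ii), I would work with an $X$-orthonormal basis $\{\psi_{h,j}\}_{j=1}^m$ of $\mathrm{Range}(P_h)$ made of eigenvectors of $A_h|_{\mathrm{Range}(P_h)}$ with eigenvalues $\mu_{h,j}$, and decompose $\psi_{h,j}=\sum_l c_{jl}\phi_l+r_{h,j}$ with $r_{h,j}\perp\mathrm{Range}(P)$. A direct computation using the integral representation of $P-P_h$ gives $\|r_{h,j}\|_X\lesssim\|(A-A_h)|_{\mathrm{Range}(P)}\|_{\mathcal{L}(X)}$. Testing $A_h\psi_{h,j}=\mu_{h,j}\psi_{h,j}$ against $\psi_{h,j}$ and inserting the expansion produces
\[
\mu-\mu_{h,j}=\sum_{k,l}c_{jk}c_{jl}\bigl((A-A_h)\phi_k,\phi_l\bigr)_X+R_{h,j},
\]
with $|R_{h,j}|\lesssim\|r_{h,j}\|_X^2\lesssim\sum_k\|(A-A_h)\phi_k\|_X^2$, which after summation over $j$ delivers \eqref{eq:Osborn}.

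The main obstacle is producing the quadratic dependence on $\|(A-A_h)\phi_j\|_X$ in the remainder rather than a merely linear one. A naive first-order perturbation would yield an error of order $\|(A-A_h)|_{\mathrm{Range}(P)}\|_{\mathcal{L}(X)}$, which would destroy the convergence rate doubling that is the whole point of this estimate. Capturing the squared dependence relies crucially on the self-adjointness of $A$ and $A_h$: it makes the leading correction in $\mu-\mu_{h,j}$ a Hermitian bilinear form in $r_{h,j}$, so that $R_{h,j}$ is genuinely of quadratic order. The delicate bookkeeping of this cancellation, together with Anselone's argument for the uniform resolvent bound, is the technical heart of the proof.
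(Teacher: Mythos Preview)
The paper does not prove this theorem: it is quoted verbatim from the literature (Babu\v{s}ka--Osborn and Osborn, as indicated in the theorem heading) and used as a black box in \S\ref{s:eigen}. There is therefore no ``paper's own proof'' to compare against; your outline is a faithful sketch of the classical argument in those references.

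A couple of minor points on your sketch. First, the resolvent identity you wrote has a sign error: the correct factorization is $(z-A_h)=(z-A)\bigl(I+(z-A)^{-1}(A-A_h)\bigr)$. Second, your invocation of Anselone's lemma is slightly loose. Collective compactness plus pointwise convergence does \emph{not} give $\|(z-A)^{-1}(A-A_h)\|_{\mathcal L(X)}\to 0$ directly; what Anselone actually yields is $\|(A-A_h)A_h\|\to 0$ and $\|(A-A_h)A\|\to 0$, and one combines this with the decomposition $(z-A)^{-1}=z^{-1}I+z^{-1}A(z-A)^{-1}$ (or an equivalent second-resolvent-identity trick) to bound $(z-A_h)^{-1}$ uniformly and then conclude $\|P-P_h\|\to 0$. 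Your argument for part~(ii) correctly identifies that self-adjointness is what produces the quadratic remainder, which is indeed the key observation behind \eqref{eq:Osborn}.
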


\subsection{Approximation result}
We start by proving that the operators $\{A_h\}$
are self-adjoint, then we prove the pointwise
convergence, and we finally establish the collective compactness.
\begin{lem}\label{Lem:self_adjointness}
For any $h$, $A_h:\bL^2(\Omega)\rightarrow\bL^2(\Omega)$ is a
self-adjoint operator, \ie for any $\be,\bef\in\bL^2(\Omega)$, the
following holds
\begin{equation}
\left(A_h\be,\bef\right)_\eps = \left(\be,A_h\bef\right)_\eps.
\end{equation}
\end{lem}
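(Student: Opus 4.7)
The plan is to exploit the symmetry identity for $a_h$ with $\theta=1$ that was already recorded in the proof of Theorem~\ref{thm:cvg_norme_l2}, namely
\[
a_h((\bF,q),(\bG,r)) = a_h((\bG,-r),(\bF,-q)),
\]
valid for all $(\bF,q),(\bG,r)\in\bX_h\times M_h$. Once this identity is in hand, the self-adjointness of $A_h$ follows from two well-chosen applications of the discrete formulation \eqref{e:discrete_formulation}.

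More precisely, I would proceed in three short steps. First, I would test \eqref{e:discrete_formulation} with data $\bg = \be$ against the test pair $(\calE_h\bef,-\calP_h\bef)\in\bX_h\times M_h$. This yields
\[
a_h\bigl((\calE_h\be,\calP_h\be),(\calE_h\bef,-\calP_h\bef)\bigr)
= (\eps\be,\calE_h\bef)_\Omega - c_\alpha h^{2(1-\alpha)}(\eps\be,\GRAD\calP_h\bef)_\Omega
= (\be,A_h\bef)_\eps,
\]
using the definition of $A_h$ and bilinearity. Symmetrically, testing with data $\bg=\bef$ against $(\calE_h\be,-\calP_h\be)$ gives
\[
a_h\bigl((\calE_h\bef,\calP_h\bef),(\calE_h\be,-\calP_h\be)\bigr) = (\bef,A_h\be)_\eps = (A_h\be,\bef)_\eps,
\]
where the last equality uses the symmetry of the scalar product $(\cdot,\cdot)_\eps$.

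Second, I would apply the symmetry identity recalled above with $(\bF,q) := (\calE_h\be,\calP_h\be)$ and $(\bG,r) := (\calE_h\bef,-\calP_h\bef)$ to deduce
\[
a_h\bigl((\calE_h\be,\calP_h\be),(\calE_h\bef,-\calP_h\bef)\bigr)
= a_h\bigl((\calE_h\bef,\calP_h\bef),(\calE_h\be,-\calP_h\be)\bigr).
\]
Comparing the two expressions obtained in the first step then yields $(\be,A_h\bef)_\eps = (A_h\be,\bef)_\eps$, which is the claim.

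There is no real obstacle beyond careful bookkeeping, but for completeness I would verify the symmetry identity itself by inspecting the seven groups of terms in \eqref{def_a_h}: the curl–curl, jump–jump tangential, divergence–divergence, pressure–pressure, and $\eps\bE$--$\eps\bF$ normal-jump contributions are manifestly invariant under $(\bE,p,\bF,q)\mapsto(\bF,-q,\bE,-p)$; the two consistency terms combine to a symmetric sum precisely because $\theta=+1$; finally, the antisymmetric pair $(\eps\GRAD p,\bF)_\Omega - (\eps\bE,\GRAD q)_\Omega$ is preserved by the substitution thanks to the two sign flips on $p$ and $q$. That $-\calP_h\bef\in M_h$ and $-\calP_h\be\in M_h$ poses no difficulty since $M_h$ is a linear subspace, so the test-function choices above are admissible.
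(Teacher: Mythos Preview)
Your proof is correct and follows essentially the same approach as the paper: both test the discrete formulation \eqref{e:discrete_formulation} with the pair $(\calE_h\bef,-\calP_h\bef)$ (and the symmetric choice) and then invoke the symmetry identity $a_h((\bF,q),(\bG,r))=a_h((\bG,-r),(\bF,-q))$ to equate the two resulting expressions. Your additional term-by-term verification of the symmetry identity is a welcome elaboration but not a departure from the paper's argument.
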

\begin{proof}
Let $\be,\bef\in \bL^2(\Omega)$. By definition we have
\[
a_h\left((\calE_h \be ,\calP_h \be),(\calE_h \bef,-\calP_h \bef)\right)
= \left(\be,\calE_h \bef\right)_{\eps} - c_\alpha
h^{2(1-\alpha)}\left(\be,\GRAD \calP_h \bef\right)_\eps =
\left(\be,A_h\bef\right)_\eps.
\]
Using the symmetry properties of $a_h$, we infer
\begin{align*}
a_h\left((\calE_h \be ,\calP_h \be),(\calE_h \bef,-\calP_h \bef)\right) &=
a_h\left((\calE_h \bef,\calP_h \bef),(\calE_h \be ,-\calP_h \be\right)) \\ &=
\left(\bef,\calE_h \be\right)_{\eps} -c_\alpha h^{2(1-\alpha)}\left(\bef,\GRAD
\calP_h \be\right)_\eps = \left(\bef,A_h\be\right)_\eps,
\end{align*}
thereby proving that the operator $A_h$ is self-adjoint on the Hilbert
space $\bL^2(\Omega)$ equipped with the inner product
$\left(\cdot,\cdot\right)_\eps$.
\end{proof}

\begin{lem}\label{Lem:L2_convergence}
Under the above assumptions, there exists $c>0$, uniform with respect
to $h$ such that,
\begin{equation}
\forall\be\in\bL^2(\Omega),\qquad \|A_h\be-A\be\|_{\eps}\le ch^{2r}\|\be\|_{\eps}. \label{strong_convergence}
\end{equation}
\end{lem}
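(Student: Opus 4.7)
\medskip
\noindent\textbf{Proof plan.} The plan is to decompose the error as
\[
A_h\be - A\be = (\calE_h\be - \bE) - c_\alpha h^{2(1-\alpha)}\GRAD \calP_h\be,
\]
where $(\bE,p)$ denotes the solution of \eqref{bvp} with right-hand side $\eps\be$, and to bound the two contributions separately. Since $\|\cdot\|_\eps$ and $\|\cdot\|_{\bL^2(\Omega)}$ are equivalent thanks to \eqref{Hyp:eps_mu}, it is enough to produce a bound in the plain $\bL^2$-norm.

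For the first term, I would apply Theorem~\ref{thm:cvg_norme_l2} directly. Since $\be$ is an arbitrary element of $\bL^2(\Omega)$, the divergence $\DIV(\eps\be)$ need not vanish, so we are in the case $r_2 = r_1 = r$, which yields
\[
\|\calE_h\be - \bE\|_{\bL^2(\Omega)} \le c\, h^{2r}\|\be\|_{\bL^2(\Omega)}.
\]

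For the second (pressure-gradient) term, the idea is to factor one power of $h^{1-\alpha}$ out and absorb the remaining factor into the discrete norm. More precisely I would write
\[
h^{2(1-\alpha)}\|\GRAD \calP_h\be\|_{\bL^2(\Omega)} = h^{1-\alpha}\cdot\bigl(h^{1-\alpha}\|\GRAD \calP_h\be\|_{\bL^2(\Omega)}\bigr),
\]
and then estimate the quantity in parentheses via the triangle inequality by
\[
h^{1-\alpha}\|\GRAD p\|_{\bL^2(\Omega)} + h^{1-\alpha}\|\GRAD(p-\calP_h\be)\|_{\bL^2(\Omega)}.
\]
The first piece is bounded by $c\, h^{1-\alpha}\|\be\|_{\bL^2(\Omega)}$ using the a priori estimate \eqref{Rot_kappa_Rot_E_in_L2} from Theorem~\ref{Thm:Hs_stability_bvp}; the second piece is (up to the $\eps^{\frac12}$ weight) one of the terms controlled by the discrete norm $\|\bE-\calE_h\be, p-\calP_h\be\|_h$, which is itself bounded by $c\, h^r\|\be\|_{\bL^2(\Omega)}$ thanks to Theorem~\ref{thm:cvg_norme_h_min_reg}. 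Since the definition $r := \min(1-\alpha,\alpha-1+\tau(1-\alpha/\ell))$ implies $r\le 1-\alpha$, both pieces are $\le c\, h^r\|\be\|_{\bL^2(\Omega)}$, and multiplying by the extracted $h^{1-\alpha}$ gives
\[
h^{2(1-\alpha)}\|\GRAD \calP_h\be\|_{\bL^2(\Omega)} \le c\, h^{r+(1-\alpha)}\|\be\|_{\bL^2(\Omega)} \le c\, h^{2r}\|\be\|_{\bL^2(\Omega)},
\]
where the last inequality again uses $r\le 1-\alpha$.

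Combining the two bounds yields \eqref{strong_convergence}. There is no serious obstacle here: once the decomposition is in place the work is bookkeeping of the exponents, and the single point that needs care is to check that the $\GRAD p$ contribution (which does not appear in $A\be$) is controlled by $h^{2r}$ and not by a worse power; this is exactly what the inequality $r\le 1-\alpha$, built into the definition of $r$ and into the admissible range \eqref{alpha_range_for_eigen} of $\alpha$, provides.
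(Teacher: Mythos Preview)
Your proof is correct and follows essentially the same route as the paper: the paper also splits $A\be-A_h\be$ into $\|A\be-\calE_h\be\|_\eps + c_\alpha h^{2(1-\alpha)}\|\GRAD\calP_h\be-\GRAD p\|_\eps + h^{2(1-\alpha)}\|\GRAD p\|_\eps$, bounds the first term by Theorem~\ref{thm:cvg_norme_l2} (with $r_1=r_2=r$ since $\DIV(\eps\be)$ need not vanish), the second by $h^{1-\alpha}\|\bE-\calE_h\be,p-\calP_h\be\|_h\le ch^{1-\alpha+r}$ via Theorem~\ref{thm:cvg_norme_h_min_reg}, and the third by \eqref{Rot_kappa_Rot_E_in_L2}, then uses $r\le 1-\alpha$ to conclude. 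The only cosmetic difference is that you factor out $h^{1-\alpha}$ before applying the triangle inequality on the pressure term, whereas the paper writes the three pieces side by side.
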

\begin{proof}
Let $A\be\in\bL^2(\Omega)$ and $p\in \xHone_0(\Omega)$ such that
$\ROT(\kappa\ROT A\be) + \eps\GRAD p = \eps\be$. 
Using the triangular inequality,
Theorems~\ref{thm:cvg_norme_h_min_reg} and~\ref{thm:cvg_norme_l2}, the
equivalence between the norms on $\bL^2(\Omega)$ and the fact that
$r\le 1-\alpha$, we infer that
\begin{align*}
\|A\be&-A_h\be\|_{\eps} \le \|A\be-\calE_h \be\|_{\eps} + c_\alpha
h^{2(1-\alpha)}\|\GRAD \calP_h \be -\GRAD p\|_{\eps} +
h^{2(1-\alpha)}\|\GRAD p\|_{\eps} \\ &\le c(h^{2r}\|\be\|_\eps +
h^{1-\alpha}\|A\be-\calE_h \be,p-\calP_h \be\|_h + h^{2(1-\alpha)}\|\be\|_{\eps})
\le c\,h^{2r}\|\be\|_{\eps},
\end{align*}
which concludes the proof.
\end{proof}
Note that the above result is stronger than the pointwise convergence
hypothesis, \ie $A_h$ converges in norm to $A$. Now let us turn our
attention to the question of collective compactness. Recall that a set
$\calA:=\{A_h\in \calL(X;X),\ h\in\Theta\}$ is said to be collectively
compact if, for each bounded set $U\subset X$, the image set $\calA
U:=\left\{A_h\bg,\; \bg\in U,\; A_h\in\calA\right\}$ is relatively
compact in $X$.
\begin{lem}\label{Lem:collective_compactness}
The family $\{A_h\}_{h>0}$ is collectively compact under the above
assumptions provided $\alpha\in\left(\frac{\ell(1-\tau)}{\ell-\tau},1\right)$.
\end{lem}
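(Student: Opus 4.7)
The plan is to deduce the collective compactness of $\{A_h\}_{h\in\Theta}$ from the operator-norm convergence $A_h\to A$ established in Lemma~\ref{Lem:L2_convergence}, together with the compactness of $A$ itself. First I would observe that $A:\bL^2(\Omega)\to\bL^2(\Omega)$ is compact. This is a consequence of Theorem~\ref{Thm:Hs_stability_bvp}, which yields $A\bg\in\bH^{s}(\Omega)$ for some $s>0$ with $\|A\bg\|_{\bH^s(\Omega)}\le c\|\bg\|_{\bL^2(\Omega)}$, combined with the compact embedding $\bH^s(\Omega)\hookrightarrow \bL^2(\Omega)$ on the bounded Lipschitz domain $\Omega$.

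Second, I would translate the estimate $\|A_h\be-A\be\|_\eps\le c\,h^{2r}\|\be\|_\eps$ of Lemma~\ref{Lem:L2_convergence} into operator-norm convergence of $A_h$ to $A$ in $\calL(\bL^2(\Omega))$. The uniform-in-$\be$ constant $c\,h^{2r}$ does the job, and the open-interval assumption $\alpha\in\left(\frac{\ell(1-\tau)}{\ell-\tau},1\right)$ is crucial: it forces $r>0$ (when $\alpha=1$ one has $1-\alpha=0$ and the rate collapses), so that $h^{2r}\to 0$ as $h\to 0$.

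Finally, I would show that a norm-convergent family $A_h\to A$ with compact limit is automatically collectively compact. Given a bounded set $U\subset\bL^2(\Omega)$ and $\varepsilon>0$, set $M:=\sup_{\bg\in U}\|\bg\|_\eps$, pick $h_0>0$ such that $\|A_h-A\|_{\calL(\bL^2(\Omega))}\le \varepsilon/(2M)$ for all $h\in\Theta$ with $h\le h_0$, and split
\[
\bigcup_{h\in\Theta}A_h U = \Big(\bigcup_{\substack{h\in\Theta\\ h>h_0}}A_h U\Big)\;\cup\;\Big(\bigcup_{\substack{h\in\Theta\\ h\le h_0}}A_h U\Big).
\]
The first piece is a \emph{finite} union (since $\Theta$ is discrete and converges to $0$) of sets $A_h U$, each contained in the finite-dimensional space $\bX_h+\GRAD M_h\subset\bL^2(\Omega)$ and bounded there, hence relatively compact. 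The set $AU$ is relatively compact by compactness of $A$, so it admits a finite cover by balls of radius $\varepsilon/2$; for the second piece, the estimate $\|A_h\bg-A\bg\|_\eps\le\varepsilon/2$ shows that the same centers with radius $\varepsilon$ cover $\bigcup_{h\le h_0}A_h U$. Combining the two yields a finite $\varepsilon$-net for $\bigcup_{h\in\Theta}A_h U$, so this set is totally bounded and therefore relatively compact in $\bL^2(\Omega)$.

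There is no real obstacle in this argument; the entire proof is a soft functional-analytic consequence of the hard work already done in Lemma~\ref{Lem:L2_convergence}. The only subtle point worth emphasizing explicitly is the role of the restriction $\alpha<1$, which both underwrites the $\bL^2$-convergence rate and guarantees that the auxiliary term $c_\alpha h^{2(1-\alpha)}\GRAD\calP_h\bg$ defining $A_h$ vanishes uniformly as $h\to 0$.
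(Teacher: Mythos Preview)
Your argument is correct. In fact, the paper itself opens its proof with the remark that ``Of course, Lemma~\ref{Lem:L2_convergence} implies the result,'' which is exactly the route you take: operator-norm convergence to the compact operator $A$, combined with the finite-dimensionality of the ranges of the finitely many $A_h$ with $h>h_0$, gives total boundedness of $\bigcup_h A_hU$. Your handling of the two regimes ($h>h_0$ versus $h\le h_0$) and the role of the strict inequality $\alpha<1$ in ensuring $r>0$ are both spot on.

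The paper then goes on to give an \emph{alternative} proof that establishes a stronger statement: a uniform bound $\|A_h\bg\|_{\bH^s(\Omega)}\le c\|\bg\|_{\bL^2(\Omega)}$ for some fixed $s\in(0,\min(r,\tau/2))$, obtained by interpolating $\|A_h\bg-A\bg\|_{\bH^s}$ between $\bL^2$ and $\bH^{2s}$, invoking an inverse inequality on the discrete part, and using Theorem~\ref{Thm:Hs_stability_bvp} for the continuous part. Collective compactness then follows from the compact embedding $\bH^s(\Omega)\hookrightarrow\bL^2(\Omega)$. This second argument still leans on Lemma~\ref{Lem:L2_convergence} for the $\bL^2$-rate, so it is not logically independent of yours; its added value is the explicit uniform Sobolev bound on $A_h$, which is a sharper piece of information than mere collective compactness, even if it is not exploited further in the paper.
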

\begin{proof} \bal{Of course, Lemma~\ref{Lem:L2_convergence} implies the
  result, but we are now going to provide an alternative proof.}  Owing to
  the compact embedding $\bH^s(\Omega)\subset\bL^2(\Omega)$ for any
  $s>0$, it is sufficient to prove that there exists $s>0$ and $c>0$
  such that, for any $\bg\in\bL^2(\Omega)$ and any $h>0$,
\[
\|A_h\bg\|_{\bH^s(\Omega)}\le c\|\bg\|_{\bL^2(\Omega)}.
\]
Let us take $\bg\in\bL^2(\Omega)$. Owing to the definition of $\bX_h$
and $M_h$, we know that $A_h\bg\in\bH^s(\Omega)$ for any
$s\in\left(0,\frac12\right)$.  Moreover, there exists $c$, only
depending on $s$ and the shape regularity of the mesh sequence, such
that the following inverse inequality holds:
\[
\|A_h\bg\|_{\bH^{s}(\Omega)} \le ch^{-s}\|A_h\bg\|_{\bL^2(\Omega)}.
\]
Let us consider \bal{$s<\min(r,\frac{\tau}{2})$}. Using the triangular
inequality, interpolation results, the above inverse inequality
together with Theorems~\ref{thm:cvg_norme_l2}
and~\ref{Thm:Hs_stability_bvp} \bal{and Lemma~\ref{Lem:L2_convergence}} leads to:
\begin{align*}
  \|A_h\bg\|_{\bH^{s}(\Omega)} &\le
  \|A_h\bg-A\bg\|_{\bH^{s}(\Omega)}+\|A\bg\|_{\bH^{s}(\Omega)} \\ &\le
  c\, \|A_h\bg-A\bg\|_{\bL^{2}(\Omega)}^{\frac12}
  \|A_h\bg-A\bg\|_{\bH^{2s}(\Omega)}^{\frac12}+c\, \|\bg\|_{\bL^2(\Omega)}
  \\ &\le c\,
  h^r\|\bg\|_{\bL^2(\Omega)}^{\frac12}\left(h^{-s}\|A_h\bg\|_{\bL^2(\Omega)}^{\frac12}+\|A\bg\|_{\bH^{2s}(\Omega)}^{\frac12}\right)+c\, \|\bg\|_{\bL^2(\Omega)}
  \\ &\le c\, \left(h^{r-s}+1\right)\|\bg\|_{\bL^2(\Omega)}.
\end{align*}
This implies the collective compactness of $\{A_h\}$ since $r>s$.
\end{proof}
We conclude that the approximation is spectrally correct, \ie we can apply
Theorem~\ref{thm:Osborn} by combining
Lemmas~\ref{Lem:self_adjointness}, \ref{Lem:L2_convergence},
\ref{Lem:collective_compactness}. \bal{Note finally that the convergence rate on the eigenvalues
is at least $\calO(h^{2r})$ owing to \eqref{eq:Osborn} and  \eqref{strong_convergence}.}

\subsection{Numerical illustration {for $\alpha<1$}}
In this section, we present some eigenvalues computations. We consider
the square $\Omega=(-1,1)^2$ in the plane. We divide $\Omega$ into
four subdomains
\[
\Omega_1 = (0,1)^2,\qquad \Omega_2 = (-1,0)\CROSS(0,1), \qquad
\Omega_3 = (-1,0)^2,\qquad \Omega_4= (0,1)\CROSS(-1,0).
\]
We use $\kappa\equiv1$ in $\Omega$,
$\eps_{|\Omega_1}=\eps_{|\Omega_3}=1$ and
$\eps_{|\Omega_2}=\eps_{|\Omega_4}=\eps_r$. Benchmark results for this
checkerboard problem are available in \cite{website:Dauge} for
$\eps_r^{-1}\in\{2,10,100,10^8\}$. Tables~\ref{tab:cvg_vp_eps_r_2}
and~\ref{tab:cvg_vp_eps_r_10} show results for $\eps_r=0.5$ and
$\eps_r=0.1$ respectively. The ratio
$\frac{|\lambda_c-\lambda_r|}{\lambda_r}$ is reported in column
``rel. err.'', where $\lambda_c$ and $\lambda_r$ are the computed and
reference eigenvalues, respectively. The reference values are those
from the benchmark. The computed order of convergence is shown in the
column ``coc''. The computations have been done using ARPACK (\cf
\cite{MR1621681}) with tolerance $10^{-8}$.  Note that the computed
order of convergence seems to reach a constant value for sufficiently
small $h$, for every eigenvalue, as expected.
\begin{table}[h]
\centering
\caption[Approximation of the first four eigenvalues 
for $\eps_r=0.5$]{Approximation of the first four 
eigenvalues for $\eps_r=0.5$. We used $\alpha=0.7$ in the simulations.}
\label{tab:cvg_vp_eps_r_2}
\begin{tabular}{|c||c|c||c|c||c|c||c|c|}
\hline
\multirow{2}{*}{$h$} & \multicolumn{2}{c||}{$\lambda_r\simeq3.3175$}& \multicolumn{2}{c||}{$\lambda_r\simeq3.3663$}& \multicolumn{2}{c||}{$\lambda_r\simeq6.1863$}& \multicolumn{2}{c|}{$\lambda_r\simeq13.926$}\\
\hhline{~--------}
& rel. err. & coc& rel. err. & coc & rel. err. & coc & rel. err. & coc \\ \hline
0.2    & 9.364E-4 & - & 3.943E-3 & - & 1.439E-1 & - & 6.104E-1 & -\\
0.1    & 1.833E-4 &2.35& 2.147E-3 &0.88& 1.734E-4 &9.70& 4.484E-1 &0.44  \\
0.05   & 3.751E-5 &2.29& 1.188E-3 &0.85& 2.241E-5 &2.95& 1.599E-1 &1.49  \\
0.025  & 8.405E-6 &2.16& 6.463E-4 &0.88& 2.833E-6 &2.98& 1.120E-5 &13.8  \\
0.0125 & 2.081E-6 &2.01& 3.439E-4 &0.91& 3.667E-7 &2.95& 1.478E-6 &2.92  \\
\hline
\end{tabular}
\end{table}
\begin{table}[h]
\centering
\caption[Approximation of the first four eigenvalues for $\eps_r=0.1$]{Approximation of the first four eigenvalues for $\eps_r=0.1$. We used $\alpha=0.8$ in the simulations.}
\label{tab:cvg_vp_eps_r_10}
\begin{tabular}{|c||c|c||c|c||c|c||c|c|}
\hline
\multirow{2}{*}{$h$} & \multicolumn{2}{c||}{$\lambda_r\simeq4.5339$}& \multicolumn{2}{c||}{$\lambda_r\simeq6.2503$}& \multicolumn{2}{c||}{$\lambda_r\simeq7.0371$}& \multicolumn{2}{c|}{$\lambda_r\simeq22.342$}\\
\hhline{~--------}
& rel. err. & coc& rel. err. & coc & rel. err. & coc & rel. err. & coc \\ \hline
0.2    & 4.559E-1 & - & 6.052E-1 & - & 6.410E-1 & - & 8.869E-1 & - \\
0.1    & 2.859E-1 &0.67& 4.731E-1 &0.36& 5.310E-1 &0.27& 8.512E-1 &0.06   \\
0.05   & 3.306E-2 &3.11& 2.982E-1 &0.67& 3.763E-1 &0.50& 8.033E-1 &0.08   \\
0.025  & 2.154E-6 &13.9& 7.748E-2 &1.94& 1.772E-1 &1.09& 7.406E-1 &0.12   \\
0.0125 & 2.608E-7 &3.05& 3.258E-3 &4.57& 5.946E-7 &18.2& 6.602E-1 &0.17   \\
\hline
\end{tabular}
\end{table}

\subsection{{The case $\alpha=1$}}
{We have shown that the numerical method is optimally convergent
  with $\alpha=1$ for the boundary value problem \eqref{bvp} if
  $\DIV(\eps\bg)=0$. It is then reasonable to investigate the
  convergence properties of the method for the eigenvalue problem with
  $\alpha=1$ even though the theoretical analysis seems to show that
  there might be a loss of compactness in this case; \ie we cannot
  apply Theorem~\ref{thm:Osborn}. We investigate this issue by solving
  again the checkerboard problem introduced in the previous section
  and by comparing the results obtained with $\alpha=0.7$ and
  $\alpha=1$. We compute the first 10 eigenvalues for $\eps_r=0.5$
  and report the results in Table~\ref{Table:alpha1_P1} for $\polP_1$
  finite elements and Table~\ref{Table:alpha1_P2} for $\polP_2$ finite
  elements. The typical meshsize in these simulations is $0.025$.
  Inspection of these tables show that the approximation with
  $\alpha=1$ is not spectrally correct. Other results on meshes with
  different meshsizes or structure (Delaunay, Powell-Sabin or HCT),
  not reported here, show the same type of behavior, \ie there are
  spurious eigenvalues when $\alpha=1$.  This series of numerical
  tests confirms the sharpness on the upper bound on $\alpha$ stated
  in Lemma~\ref{Lem:collective_compactness}.
\begin{table}[h]
\centering
\caption[Approximation of the first ten eigenvalues for $\eps_r=0.5$]{{Approximation} of the first ten eigenvalues with $\polP_1$ elements and $\eps_r=0.5$. Comparison between $\alpha=0.7$ and $\alpha=1.0$.}
\label{Table:alpha1_P1}
\begin{tabular}{|c||c|c||c|c|}\hline
\multirow{2}{*}{$\lambda$} & \multicolumn{2}{c||}{$\alpha=0.7$} & \multicolumn{2}{c|}{$\alpha=1.0$} \\
\hhline{~----}
       & app. value & rel. error  & app. value & rel. error \\ \hline
 3.31755 &  3.31844 &  2.70E-4 & 3.31790 &  1.06E-4 \\
 3.36632 &  3.37816 &  3.51E-3 & 3.36786 &  4.56E-4 \\
 6.18639 &  6.18732 &  1.50E-4 & 3.91497 &  3.67E-1 \\    
 13.9263 &  13.9321 &  4.14E-4 & 3.91616 &  7.18E-1 \\    
 15.0830 &  15.0888 &  3.88E-4 & 4.14335 &  7.25E-1 \\    
 15.7789 &  15.7859 &  4.48E-4 & 4.29445 &  7.27E-1 \\    
 18.6433 &  18.6555 &  6.53E-4 & 4.30863 &  7.68E-1 \\    
 25.7975 &  25.8163 &  7.29E-4 & 15.0191 &  4.17E-1 \\    
 29.8524 &  29.8684 &  5.36E-4 & 35.7192 &  1.96E-1 \\    
 30.5379 &  30.5643 &  8.66E-4 & 305.349 &  9.00E0 \\    
\hline
\end{tabular}
\end{table}

\begin{table}[h]
\centering
\caption[Approximation of the first ten eigenvalues for $\eps_r=0.5$]{{Approximation} of the first ten eigenvalues with $\polP_2$ elements and $\eps_r=0.5$. Comparison between $\alpha=0.7$ and $\alpha=1.0$.}
\label{Table:alpha1_P2}
\begin{tabular}{|c||c|c||c|c|}\hline
\multirow{2}{*}{$\lambda$} & \multicolumn{2}{c||}{$\alpha=0.7$} & \multicolumn{2}{c|}{$\alpha=1.0$} \\
\hhline{~----}
       & app. value & rel. error  & app. value & rel. error \\ \hline
 3.31755 &  3.31758  &  8.55E-6  &  3.31756 &  2.30E-6  \\
 3.36632 &  3.36857  &  6.68E-4  &  3.36634 &  3.62E-6  \\
 6.18639 &  6.18641  &  3.14E-6  &  4.28879 &  3.07E-1  \\    
 13.9263 &  13.9265  &  1.05E-5  &  4.29153 &  6.92E-1  \\    
 15.0830 &  15.0832  &  1.14E-5  &  4.30113 &  7.15E-1  \\    
 15.7789 &  15.7791  &  1.36E-5  &  4.30145 &  7.27E-1  \\    
 18.6433 &  18.6436  &  1.52E-5  &  4.30683 &  7.69E-1  \\    
 25.7975 &  25.7979  &  1.36E-5  &  12.8213 &  5.03E-1  \\    
 29.8524 &  29.8530  &  2.04E-5  &  37.1980 &  2.46E-1  \\    
 30.5379 &  30.5395  &  5.43E-5  &  1308.73 &  4.19E+1  \\    
\hline
\end{tabular}
\end{table}
}
\section*{Acknowledgments} We are pleased to acknowledge fruitful
discussions with Simon Labrunie and Patrick Ciarlet. \bal{We also thank the
referees for their thorough reviews and helpful comments which lead to
many improvements in the manuscript.}

\appendix 
\section{Technical Lemmas}
Let $\{\calT_h\}_{h>0}$ be an affine shape-regular mesh sequence in
$\Real^3$. Let $T_K:\wK\longrightarrow K$ be the affine
mapping that maps the reference element $\wK$ to $K$ and let $J_K$ be
the Jacobian of $T_K$. It is a standard result that there are
constants that depend only on $\wK$ and the shape regularity constants
of the mesh sequence so that
\begin{equation}
  \|J_K\| \le c h_K,\quad \|J_K^{-1}\| \le c h_K^{-1}, 
  \qquad |\det(J_K)| \le c h_K^3, \quad |\det(J_K^{-1})| \le c h_K^{-3},
\end{equation} 
where $h_K$ is the diameter of $K$.

\begin{lem} \label{Lem:Hs_zero_average}For all $s\in [0,1]$, there is
  a constant $c$, uniform with respect to the mesh sequence, so that the
  following holds for all cells $K\in\calT_h$ and all $\psi\in \xHn{s}(K)$
  with zero average over $K$:
\begin{equation}
  \|\wpsi\|_{\xHn{s}(\wK)} \le  c h_K^{s-\frac{d}{2}} \|\psi\|_{\xHn{s}(K)}
  ,\qquad\text{where}\qquad  \wpsi(\bx) := \psi(T_K(\bx))
\end{equation}

\end{lem}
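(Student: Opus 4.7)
The strategy is to prove the bound at the two endpoints $s=0$ and $s=1$ by direct scaling arguments and then to conclude by real interpolation on the zero-average subspace. The zero-average hypothesis is only needed at $s=1$, where it is used to absorb the $L^2$-part of the $H^1$-norm via Poincar\'e--Wirtinger.

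At $s=0$, a direct change of variables yields
\[
\|\widehat\psi\|_{\xLtwo(\wK)}^2 = |\det(J_K^{-1})|\,\|\psi\|_{\xLtwo(K)}^2 \le c\,h_K^{-d}\|\psi\|_{\xLtwo(K)}^2,
\]
which is the desired estimate with exponent $-d/2$ (and does not use the zero-average condition). At $s=1$, the chain rule gives $\nabla\widehat\psi = J_K^\top(\nabla\psi\circ T_K)$, so another change of variables combined with the bounds on $\|J_K\|$ and $|\det(J_K^{-1})|$ produces $\|\nabla\widehat\psi\|_{\xLtwo(\wK)}\le c\,h_K^{1-d/2}\|\nabla\psi\|_{\xLtwo(K)}$. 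To get the full $\xHn{1}$-norm with the same scaling power $h_K^{1-d/2}$ rather than $h_K^{-d/2}$, I would invoke the Poincar\'e--Wirtinger inequality on $K$: since $\psi$ has zero mean on $K$, one has $\|\psi\|_{\xLtwo(K)}\le c\,h_K\|\nabla\psi\|_{\xLtwo(K)}$ with a constant that is uniform for shape-regular meshes (pull back to $\wK$, apply the fixed Poincar\'e--Wirtinger constant there, and scale back). Combining the two contributions yields $\|\widehat\psi\|_{\xHn{1}(\wK)}\le c\,h_K^{1-d/2}\|\psi\|_{\xHn{1}(K)}$.

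For $0<s<1$, I would apply the real interpolation method. Let $L^2_\ast(K)$ and $\xHn{1}_\ast(K)$ denote the closed subspaces of zero-mean functions. The linear map $S_K:\psi\mapsto\widehat\psi$ satisfies
\[
\|S_K\|_{L^2_\ast(K)\to \xLtwo(\wK)}\le c\,h_K^{-d/2},\qquad \|S_K\|_{\xHn{1}_\ast(K)\to \xHn{1}(\wK)}\le c\,h_K^{1-d/2}.
\]
Real interpolation $[\cdot,\cdot]_{s,2}$ then gives the bound with exponent $s-d/2$. To transfer the estimate to $\{\psi\in\xHn{s}(K):\int_K\psi=0\}$ with its ambient $\xHn{s}(K)$-norm, I would use that the mean projector $\Pi_K\psi:=|K|^{-1}\int_K\psi$ is uniformly bounded on both $\xLtwo(K)$ and $\xHn{1}(K)$, hence on every intermediate space, so the zero-mean subspace is complemented with uniform constants in the shape-regularity.

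The main technical point is the interpolation step: one must verify that the interpolation norm $[L^2_\ast(K),\xHn{1}_\ast(K)]_{s,2}$ is equivalent, with constants uniform in $h_K$, to the restriction of the $\xHn{s}(K)$-norm to zero-mean functions. This is the reason I would formulate the argument via the complemented-subspace / retraction structure provided by the uniformly bounded mean projector, rather than by interpolating the endpoint inequalities directly on all of $\xHn{s}(K)$ (which would give $h_K^{-d/2}$ instead of the desired $h_K^{s-d/2}$). Everything else is a routine assembly of the two endpoint estimates.
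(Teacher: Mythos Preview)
Your proposal is correct and follows essentially the same route as the paper: endpoint scaling at $s=0$ and $s=1$ (with Poincar\'e--Wirtinger to recover the correct power at $s=1$), followed by real interpolation on the zero-mean subspace, and finally identifying $[L^2_\ast,H^1_\ast]_{s,2}$ with $H^s\cap L^2_\ast$ via the uniformly bounded mean projector. The only cosmetic difference is that the paper applies Poincar\'e on the reference element $\wK$ (where the constant is fixed) rather than on $K$, and packages the complemented-subspace argument as a separate lemma (Lemma~\ref{Lem:Hs_average}); your retraction formulation is exactly the content of that lemma.
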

\begin{proof} Upon making the change of variable $\bx=T_K(\wbx)$ we
  obtain
\[
\|\wpsi\|_{\xLtwo(\wK)} = |\det(J_K)|^{-\frac12} \|\psi\|_{\xLtwo(K)} \le c
h_K^{-\frac{d}{2}} \|\psi\|_{\xLtwo(K)}.
\]
Likewise, using the fact that $\wpsi$ is of zero average,
the Poincar\'e
inequality implies
\begin{align*}
  \|\wpsi\|_{\xHone(\wK)} & = \bal{\left(\|\wpsi\|_{\xLtwo(\wK)}^2 + \|\wGRAD
    \wpsi\|_{\xLtwo(\wK)}^2\right)^{\frac12} \le (c_p(\wK) +1)^{\frac12}
  \|\wGRAD \wpsi\|_{\xLtwo(\wK)} }\\
  &\le c |\det(J_K)|^{-\frac12} \|J_K\| \|\GRAD \psi\|_{\xLtwo(K)} \le c
  h_K^{-\frac{d}{2}+1} \|\psi\|_{\xHone(K)}.
\end{align*} 
Then, the \bal{interpolation} theorem implies
that
\[
\|\wpsi\|_{\dot{\xHn{s}}(\wK)} \le c\, h_K^{s-\frac32} \|\psi\|_{\dot{\xHn{s}}(K)},
\]
where we defined $\dot{\xHn{s}}(E) :=
[\dot \xLtwo(E),\dot{\xHone}(E)]_{s,2}$ with $\dot \xLtwo(E)$ and
$\dot{\xHone}(E)$ being the subspaces of the functions of zero
average in $\xLtwo(E)$ and $\xHone(E)$, respectively. We conclude using
Lemma~\ref{Lem:Hs_average}
\end{proof}

\begin{lem} \label{Lem:Hs_average} The spaces
  $[\dot \xLtwo(E),\dot{\xHone}(E)]_s$ and $[\xLtwo(E),\xHone(E)]_s\cap
  \dot \xLtwo(E)$ are identical and the induced norms are identical,
  \ie $\|v\|_{\dot{\xHn{s}}(E)} = \|v\|_{\xHn{s}(E)}$ for all $v\in
  [\xLtwo(E),\xHone(E)]_s\cap \dot  \xLtwo(E)$.
\end{lem}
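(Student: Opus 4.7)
My plan is to establish the identity of the two interpolation spaces (with identical, not just equivalent, norms) by showing that the Peetre K-functionals coincide on $\dot L^2(E)$. Recall that for any compatible couple $(X_0,X_1)$ and any $v\in X_0+X_1$,
\begin{equation*}
K(t,v;X_0,X_1) := \inf_{v=v_0+v_1,\, v_i\in X_i}\left(\|v_0\|_{X_0}+t\|v_1\|_{X_1}\right),
\end{equation*}
and the interpolation norm equals $\left(\int_0^\infty (t^{-s}K(t,v))^2 \tfrac{dt}{t}\right)^{1/2}$, so matching K-functionals gives matching norms.

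First I would observe the easy inequality $K(t,v;\dot L^2(E),\dot H^1(E))\ge K(t,v;L^2(E),H^1(E))$ for every $v\in\dot L^2(E)$, which is immediate since every admissible decomposition for the left-hand side is admissible for the right-hand side.

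The key step, and the only non-trivial one, is the reverse inequality. Given $v\in\dot L^2(E)$ and any decomposition $v=v_0+v_1$ with $v_0\in L^2(E)$, $v_1\in H^1(E)$, the constraint $\int_E v=0$ forces $\bar v_0+\bar v_1=0$, where $\bar{w}:=|E|^{-1}\int_E w$. Define the zero-mean decomposition
\begin{equation*}
\tilde v_0 := v_0-\bar v_0\in\dot L^2(E),\qquad \tilde v_1 := v_1-\bar v_1\in\dot H^1(E),
\end{equation*}
which still satisfies $\tilde v_0+\tilde v_1=v$ because $\bar v_0+\bar v_1=0$. A direct expansion (using $\bar v_1=-\bar v_0$) yields
\begin{equation*}
\|\tilde v_0\|_{L^2(E)}^2 = \|v_0\|_{L^2(E)}^2 - |E||\bar v_0|^2,\qquad
\|\tilde v_1\|_{L^2(E)}^2 = \|v_1\|_{L^2(E)}^2 - |E||\bar v_0|^2,
\end{equation*}
while $\|\nabla \tilde v_1\|_{L^2(E)}=\|\nabla v_1\|_{L^2(E)}$ since shifting by a constant does not affect the gradient. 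Thus $\|\tilde v_0\|_{L^2(E)}\le\|v_0\|_{L^2(E)}$ and $\|\tilde v_1\|_{H^1(E)}\le\|v_1\|_{H^1(E)}$, so the modified decomposition is no worse than the original one. Taking the infimum yields $K(t,v;\dot L^2(E),\dot H^1(E))\le K(t,v;L^2(E),H^1(E))$.

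Combining both inequalities shows that the two K-functionals coincide pointwise on $\dot L^2(E)$, and hence the real interpolation norms are literally equal, establishing $\|v\|_{\dot H^s(E)}=\|v\|_{H^s(E)}$ for $v\in[L^2(E),H^1(E)]_{s,2}\cap\dot L^2(E)$. The only subtle point is recognizing that the mean-subtraction projection is a bounded retraction on both $L^2(E)$ and $H^1(E)$ that actually \emph{decreases} norms simultaneously for compatible decomposition pairs; this strict monotonicity (rather than mere continuity) is what upgrades equivalence of norms to equality, and is where the simple pointwise computation above is essential.
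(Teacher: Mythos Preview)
Your proof is correct. The K-functional argument is clean: the mean-subtraction map $v\mapsto v-\bar v$ is a projection that weakly contracts both the $L^2$ and $H^1$ norms, so every decomposition of a zero-mean $v$ in the ambient couple can be replaced, at no cost, by a zero-mean decomposition. This yields pointwise equality of K-functionals and hence equality of interpolation norms, not merely equivalence.

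The paper takes a different route: it simply invokes Lemma~A1 of \cite{Guer_LBB_2007}, a general statement about interpolation of complemented subspaces, applied with $T$ the projection onto $\dot L^2(E)$. Your argument is in effect a direct, self-contained proof of that abstract lemma in this particular case. What you gain is transparency and independence from an external reference; what the paper's approach gains is brevity and the recognition that the result is an instance of a standard interpolation principle (retract/coretract, or complemented-subspace interpolation). Both are valid; your explicit norm computation is what makes the \emph{equality} of norms (as opposed to equivalence) immediately visible, whereas in the abstract version one has to check that the projection has operator norm at most~$1$ on both endpoint spaces.
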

\begin{proof}
  One can use Lemma A1 from \cite{Guer_LBB_2007} with $T$ being the
  projection onto $\dot \xLtwo(\Omega)$.
\end{proof}

We now state the main result of this section.  It is a variant of
Lemma~8.2 in \cite{MR2263045} with the extra term
$\|\bphi\|_{\bL^2(K)}$.  Our proof slightly differs from that in
\cite{MR2263045} since the proof therein did not appear convincing to
us (actually, the embedding inequality at line 9, page 2224 in
\cite{MR2263045} has a constant that depends on the size of the cell;
for instance, using \bal{a constant vector field for $\bphi$} in this
inequality yields a contradiction.  As result the estimate (8.11) in
\citep{MR2263045} is not uniform with respect to $h$).
\begin{lem} \label{l:cont_IP}For all $k\in \polN$ and all $\sigma\in
  (0,\frac12)$ there is $c$, uniform with respect to the mesh sequence,
  so that the following holds for all faces $F\in \calF_h$ in the
  mesh, all polynomial function $\bv$ of degree at most $k$, and all
  function $\bphi\in\bH^\sigma(K)\cap \bH(\text{\rm curl},K)$
\begin{equation}\label{eq:cont_IP_1}
 \bal{ \Big|\int_F (\bv{\times}\bn)\SCAL \bphi\Big| }\le 
  c \|\bv\|_{\bL^2(F)}h_F^{-\frac12} (h_K^\sigma \|\bphi\|_{\bH^\sigma(K)} 
  + h_K \|\ROT\bphi\|_{\bL^2(K)} + 
  \|\bphi\|_{\bL^2(K)}), 
\end{equation}
where $K$ is either one of the two elements sharing the face $F$.
\end{lem}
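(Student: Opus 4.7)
The plan is to reduce \eqref{eq:cont_IP_1} by affine scaling to a size-independent reference estimate on $\wK$, and there to combine a polynomial lifting of $\hat{\bv}$ with Green's identity for the curl and a uniform trace inequality for fields in $\bH^\sigma(\wK)\cap\Hcurl[\wK]$. The delicate point, where the argument departs from that in \cite{MR2263045}, is to keep the constant of this trace inequality independent of the cell size.

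Setting $\hat{\bv}:=\bv\circ T_K$ and $\hat{\bphi}:=\bphi\circ T_K$, a change of variables together with the scaling rules recalled at the beginning of the appendix (Lemma~\ref{Lem:Hs_zero_average} for the fractional seminorm, applied after subtracting the mean value of $\bphi$ which is then absorbed into the $\|\bphi\|_{\bL^2(K)}$ contribution) and the shape regularity $h_F\sim h_K$ reduce \eqref{eq:cont_IP_1} to the reference inequality
\[
\Big|\int_{\hat{F}}(\hat{\bv}\times\hat{\bn})\cdot\hat{\bphi}\Big| \le c\,\|\hat{\bv}\|_{\bL^2(\hat{F})}\Big(\|\hat{\bphi}\|_{\bH^\sigma(\wK)}+\|\ROT\hat{\bphi}\|_{\bL^2(\wK)}+\|\hat{\bphi}\|_{\bL^2(\wK)}\Big),
\]
with $c$ depending only on $k$, $\sigma$ and $\wK$; reversing the scaling produces the advertised powers $h_F^{-1/2}$, $h_K^\sigma$, $h_K$. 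To establish the reference inequality I would extend $\hat{\bv}$ from $\hat{F}$ to a polynomial $\tilde{\bv}$ of degree at most $k$ on $\wK$ by any stable polynomial lifting (for instance prolongation along the outward normal). Because polynomials of bounded degree on the fixed reference cell form a finite-dimensional space, $\|\tilde{\bv}\|_{\bH^m(\wK)}\le c(k,m)\|\hat{\bv}\|_{\bL^2(\hat{F})}$ for every $m\ge 0$. By density of smooth fields in $\bH^\sigma(\wK)\cap\Hcurl[\wK]$, I may treat $\hat{\bphi}$ as smooth and apply Green's identity for the curl to obtain
\[
\int_{\hat{F}}(\hat{\bv}\times\hat{\bn})\cdot\hat{\bphi} = \int_{\wK}\tilde{\bv}\cdot\ROT\hat{\bphi} - \int_{\wK}\ROT\tilde{\bv}\cdot\hat{\bphi} - \sum_{\hat{F}'\neq\hat{F}}\int_{\hat{F}'}(\tilde{\bv}\times\hat{\bn})\cdot\hat{\bphi},
\]
the sum running over the other faces of $\wK$. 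The two volume integrals, combined with Cauchy-Schwarz and the polynomial bounds on $\tilde{\bv}$, yield the $\|\hat{\bphi}\|_{\bL^2(\wK)}$ and $\|\ROT\hat{\bphi}\|_{\bL^2(\wK)}$ contributions.

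The main obstacle is the control of the surface terms $\int_{\hat{F}'}(\tilde{\bv}\times\hat{\bn})\cdot\hat{\bphi}$ for $\hat{F}'\neq\hat{F}$: for $\sigma\in(0,\tfrac12)$ the scalar trace of $\hat{\bphi}$ on $\hat{F}'$ does not exist from $\bH^\sigma$-regularity alone, while the duality pairing furnished by $\Hcurl$-regularity would require a tangential test field with continuous trace across edges, a property that $\tilde{\bv}|_{\hat{F}'}$ does not enjoy since it need not vanish on $\partial\hat{F}'$. The crucial step, carried out on the fixed reference cell and hence with a constant depending only on $\sigma$ and $\wK$, is to establish a trace inequality
\[
\|\hat{\bphi}\|_{\bL^2(\partial\wK)} \le c\Big(\|\hat{\bphi}\|_{\bH^\sigma(\wK)} + \|\ROT\hat{\bphi}\|_{\bL^2(\wK)} + \|\hat{\bphi}\|_{\bL^2(\wK)}\Big)
\]
for all $\hat{\bphi}\in\bH^\sigma(\wK)\cap\Hcurl[\wK]$; this is the precise point where the argument in \cite{MR2263045} loses its $h$-uniformity and where the present proof must be carried out more carefully, e.g.\ via a regular decomposition of $\hat{\bphi}$ into a divergence-free part and a gradient part combined with an interpolation/duality argument exploiting both the fractional and the curl regularities. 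Once this inequality is available, Cauchy-Schwarz on each $\hat{F}'$, the polynomial bound $\|\tilde{\bv}\|_{\bL^2(\hat{F}')}\le c\|\hat{\bv}\|_{\bL^2(\hat{F})}$, and summation over the bounded number of faces of $\wK$ conclude the proof.
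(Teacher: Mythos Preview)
Your proposal has a genuine gap at the step you yourself flag as ``the main obstacle.'' The trace inequality
\[
\|\hat{\bphi}\|_{\bL^2(\partial\wK)} \le c\Big(\|\hat{\bphi}\|_{\bH^\sigma(\wK)} + \|\ROT\hat{\bphi}\|_{\bL^2(\wK)} + \|\hat{\bphi}\|_{\bL^2(\wK)}\Big),
\qquad \sigma\in(0,\tfrac12),
\]
is false. Take $\hat{\bphi}=\GRAD u$ with $u\in H^{1+\sigma}(\wK)\setminus H^{3/2}(\wK)$: then $\ROT\hat{\bphi}=0$ and the right-hand side is finite, but the tangential trace of $\GRAD u$ lies only in $\bH^{\sigma-1/2}(\partial\wK)$, not in $\bL^2(\partial\wK)$. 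The regular decomposition you invoke does not help, since the gradient part inherits exactly this defect. Consequently the boundary integrals over the faces $\hat F'\ne\hat F$ that your polynomial lifting creates cannot be bounded, and the argument stalls.

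The paper circumvents the problem by choosing the lifting differently. Instead of extending $\hat{\bv}$ polynomially into $\wK$, it first extends $\hat{\bv}$ by zero to all of $\partial\wK$; because $\sigma\in(0,\tfrac12)$, this zero extension is bounded in $\bH^{\frac12-\sigma}(\partial\wK)$. A standard lifting $R:\bH^{\frac12-\sigma}(\partial\wK)\to\bH^{1-\sigma}(\wK)$ then produces a field whose tangential trace vanishes on every face other than $\hat F$, so Green's identity yields \emph{no} residual boundary terms. The volume term $\int_{\wK}\hat{\bpsi}\cdot\wROT(R\hat{\bv})$ is then controlled by the duality $\bH^\sigma_0(\wK)\times\bH^{-\sigma}(\wK)$ together with $\|\wROT(R\hat{\bv})\|_{\bH^{-\sigma}(\wK)}\le c\|R\hat{\bv}\|_{\bH^{1-\sigma}(\wK)}$, which is precisely where the restriction $\sigma<\tfrac12$ (so that $\bH^\sigma=\bH^\sigma_0$) enters. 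The mean value of $\bphi$ is split off beforehand so that Lemma~\ref{Lem:Hs_zero_average} applies to $\hat\bpsi$; it is handled separately by Cauchy--Schwarz and produces the $\|\bphi\|_{\bL^2(K)}$ term.
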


\begin{proof} 
  We restrict ourselves to three space dimensions.  In two space
  dimensions $\bphi$ is scalar-valued and the proof must be modified
  accordingly. Let $K$ be either one of the two elements sharing the
  face $F$.  Let $\obphi$ be the average of $\bphi$ over $K$ and let
  us denote $\bpsi:=\bphi-\obphi$.  Upon denoting $\wbv(\wbx)= J_K^T
  \bv(T_K(\wbx))$ and $\wbpsi(\wbx) = J_K^T \bpsi(T_K(\wbx))$, it is a
  standard result (see \cite[3.82]{MR2059447}) that
\[
\int_F (\bv{\times}\bn)\SCAL \bpsi  = \int_\wF (\wbv{\times}\wbn)\SCAL \wbpsi,
\]
where $\wbn$ is one of the two unit normals on $\wF$.  Let us extend
$\wbv$ by zero on $\partial \wK{\setminus}\wF$; then $\wbv \in
\bH^{\frac12-\sigma}(\partial \wK)$ for all $\sigma>0$, \bal{since the extension by zero
is stable in the $H^s$-norm for all $s\in[0,\frac12)$, see \eg \cite[Thm.~11.4]{LM68} for smooth domains
and \cite[Thm.~1.4.2.4 or Cor.~1.4.4.5]{bkGrisvard} for Lipschitz domains.} Note that it
is not possible to have $\sigma=0$. Now let
$R:\bH^{\frac12-\sigma}(\partial \wK)\longrightarrow
\bH^{1-\sigma}(\wK)$ be a standard lifting operator. There is a
constant depending only on $\wK$ and $\sigma$ so that
\[
\| R \wbv\|_{\bL^2(\wK)} + \|\wROT R\wbv\|_{\bH^{-\sigma}(\wK)} \le
c(\wK,\sigma) \|R\wbv\|_{\bH^{1-\sigma}(\wK)} \le c'c(\wK,\sigma)
\|\wbv\|_{\bH^{\frac12-\sigma}(\wF)},
\]
where $\wROT$ is the curl operator in the coordinate system of $\wK$.
Then, {slightly abusing the notation by using integrals instead of duality products,
we have}
\begin{align*}
\left|  \int_\wF (\wbv{\times}\wbn)\SCAL \wbpsi \right| & 
= \left|  \int_\wK \left((R\wbv)\SCAL \wROT \wbpsi - \wbpsi\SCAL \wROT (R\wbv)\right)\right| \\
  & 
  \le c \left( \|(R\wbv)\|_{\bL^2(\wK)} \|\wROT \wbpsi
    \|_{\bL^2(\wK)} +
    \|\wbpsi\|_{\bH^{\sigma}_0(\wK)} \|\wROT (R\wbv)\|_{\bH^{-\sigma}(\wK)}\right) \\
  & \le c \left( \|\wROT \wbpsi \|_{\bL^2(\wK)} +
    \|\wbpsi\|_{\bH^{\sigma}_0(\wK)} \right)
  \|\wbv\|_{\bH^{\frac12-\sigma}(\wF)} \\
 & \le c \left( \|\wROT \wbpsi \|_{\bL^2(\wK)} +
    \|\wbpsi\|_{\bH^{\sigma}(\wK)} \right)
  \|\wbv\|_{\bH^{\frac12-\sigma}(\wF)},
\end{align*}
where we used that $\bH^{\sigma}(\wK)=\bH^{\sigma}_0(\wK)$ for
$\sigma\in [0,\frac12)$.  Due to norm equivalence for discrete
functions over $\wK$ and using that $\|J_K\|\le c h_K$, $h_K/h_F\le c$
and $|F| \le c h_F^2$ in three space dimensions, where $c$ depends of
the shape-regularity constant of the mesh sequence and the polynomial
degree $k$, we have
\[
\|\wbv\|_{\bH^{\frac12-\sigma}(\wF)} \le c \|\wbv\|_{\bL^2(\wF)} \le
c \|J_K\| |F|^{-\frac12} \|\bv\|_{\bL^2(F)} \le c h_K h_F^{-1}
\|\bv\|_{\bL^2(F)}\le c'\|\bv\|_{\bL^2(F)}.
\]
Using the identity (see \cite[Cor.~3.58]{MR2059447})
\[
(\ROT\bpsi)(T_K(\wbx)) = \frac{1}{\text{det}(J_K)} J_K (\wROT\wbpsi)(\wbx),
\]
we obtain 
\[
\|\wROT\wbpsi\|_{\bL^2(K)} \le c |\text{det}(J_K)|^{\frac12
}\|J_K^{-1}\|\|\ROT\bpsi\|_{\bL^2(K)} \le c h_K^{\frac12 }\
\|\ROT\bpsi\|_{\bL^2(K)}.
\]
Since the average of $\bpsi$ over $K$ is zero, we can use
Lemma~\ref{Lem:Hs_zero_average} (with an extra scaling by $\|J_K\|$ for $\wbpsi = J_K^T \bpsi(T_K)$) to
deduce
\[
\|\wbpsi\|_{\bH^\sigma(\wK)} \le c h_K^{\sigma-\frac12}   \|\bpsi\|_{\bH^\sigma(K)}.
\]
In conclusion we have obtained the following estimate:
\[
\int_F (\bv{\times}\bn)\SCAL (\bphi-\obphi) \le c \left( h_K\
  \|\ROT\bphi\|_{\bL^2(K)} + h_K^{\sigma}
  \|\bphi-\obphi\|_{\bH^\sigma(K)}\right) h_K^{-\frac12}
\|\bv\|_{\bL^2(F)}.
\]
Observing that $\|1\|_{\bH^\sigma(K)} \le \|1\|_{\bL^2(K)}^{1-\sigma}
\|1\|_{\bH^1(K)}^{\sigma}= \|1\|_{\bL^2(K)}=|K|^{\frac12}$, we infer
that
\[
\|\bphi-\obphi\|_{\bH^\sigma(K)} \le \|\bphi\|_{\bH^\sigma(K)} +|\obphi| |K|^{\frac12}
\]
The Cauchy-Schwarz inequality yields
$|\obphi| \le |K|^{-\frac12}\|\bphi\|_{\bL^2(K)}$; as a result,
\[
\|\bphi-\obphi\|_{\bH^\sigma(K)} \le \|\bphi\|_{\bH^\sigma(K)} +
\|\bphi\|_{\bL^2(K)} \le 2 \|\bphi\|_{\bH^\sigma(K)}.
\]
Now we evaluate a bound from above on $\int_F (\bv{\times}\bn)\SCAL
\obphi$ as follows: 
\begin{align*}
  \left|\int_F (\bv{\times}\bn)\SCAL \obphi\right| & \le |\obphi|
  |F|^{\frac12}\|\bv\|_{\bL^2(F)} \le |K|^{-\frac12}\|\bphi\|_{\bL^2(K)}|F|^{\frac12}\|\bv\|_{\bL^2(F)} \\
  & \le c \|\bv\|_{\bL^2(F)} h_F^{-\frac12} \|\bphi\|_{\bL^2(K)}.
\end{align*}
The result follows by combining all the above estimates.
\end{proof}

\begin{lem} \label{Lem:L2Gamma}
Let $\alpha\in(\frac12,1)$.
There is exists a constant $c(\alpha)$ so that
\begin{equation}\label{eq:L2Gamma_1}
  \|u\|_{\xLtwo(\front)} \le c(\alpha) \|u\|_{\xLtwo(\Omega)}^{1-\frac{1}{2\alpha}} 
  \|u\|_{\xHn{\alpha}(\Omega)}^{\frac{1}{2\alpha}}, \qquad \forall u\in \xHn{\alpha}(\Omega).
\end{equation}
Similarly, for $s\in\left(0,\frac12\right)$, there exists a constant
$c(s)$ so that, for $\Theta:=\frac{1-2s}{2(1-s)}$,
\begin{equation}\label{eq:L2Gamma_2}
  \|u\|_{\xLtwo(\front)} \le c(s) \|u\|_{\xHn{s}(\Omega)}^{1-\Theta} 
  \|u\|_{\xHone(\Omega)}^{\Theta}, \qquad \forall u\in \xHone(\Omega).
\end{equation}
\end{lem}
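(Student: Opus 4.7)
The plan is to reduce both estimates, by standard localization and extension, to sharp multiplicative trace inequalities on the half-space $\Real^3_+$, which can then be established by a Fourier-analytic split-and-optimize argument.

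First, I would cover $\front$ by finitely many open charts, straighten the boundary in each chart by a bi-Lipschitz map, and combine this with a continuous extension operator $E\colon \xHn r(\Omega)\to \xHn r(\Real^3)$, $r\in[0,1]$ (available because $\Omega$ is Lipschitz), to reduce both inequalities to their half-space analogues: for $v\in\calS(\Real^3)$,
\[
\|v(\cdot,0)\|_{\xLtwo(\Real^2)} \le c\,\|v\|_{\xLtwo(\Real^3)}^{1-\frac{1}{2\alpha}}\|v\|_{\xHn\alpha(\Real^3)}^{\frac{1}{2\alpha}}
\]
for the first claim, and the analogous inequality with right-hand side $c\,\|v\|_{\xHn s(\Real^3)}^{1-\Theta}\|v\|_{\xHn 1(\Real^3)}^{\Theta}$ for the second.

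Next, to prove the half-space estimates, I would write the trace through the partial Fourier transform in the tangential variables, using that $\widehat{v(\cdot,0)}(\xi')=\int_\Real \hat v(\xi',\xi_3)\,d\xi_3$. Splitting this integral at a free level $R>0$ and applying Cauchy--Schwarz on each piece with an appropriate Sobolev weight yields, after integration in $\xi'$ and Plancherel,
\[
\|v(\cdot,0)\|_{\xLtwo(\Real^2)}^2 \le c\,\bigl(R\,\|v\|_{\xLtwo(\Real^3)}^2 + R^{1-2\alpha}\,\|v\|_{\xHn\alpha(\Real^3)}^2\bigr)
\]
for the first claim, together with an analogous bound involving the $\xHn s$ and $\xHn 1$ weights in the second. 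Optimizing in $R$ then produces both multiplicative inequalities with the exponents stated in the lemma; the explicit value $\Theta=(1-2s)/(2(1-s))$ is determined by the homogeneity constraint $\tfrac12 = (1-\Theta)s + \Theta$, so that the resulting bound is invariant under isotropic rescaling.

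The main obstacle is to obtain the sharp borderline exponent: composing the continuity $\gamma_0\colon \xHn{1/2+\epsilon}(\Omega)\to \xLtwo(\front)$ with off-the-shelf real interpolation on the scale $[\xLtwo(\Omega),\xHn\alpha(\Omega)]$ (respectively $[\xHn s(\Omega),\xHn 1(\Omega)]$) produces a constant $c(\epsilon)$ blowing up as $\epsilon\downarrow 0$ and is therefore not sufficient. The Cauchy--Schwarz split in Fourier space, with the cutoff $R$ free to be tuned pointwise in the energy, is precisely the mechanism that yields the sharp borderline constant; once the half-space inequality is established, the localization and extension steps are routine.
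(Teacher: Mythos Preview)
Your argument is correct, but it differs substantially from the paper's. The paper does not localize or go to Fourier space at all; instead it takes the classical multiplicative trace inequality
\[
\|u\|_{\xLtwo(\front)} \le c\,\|u\|_{\xLtwo(\Omega)}^{1/2}\|u\|_{\xHone(\Omega)}^{1/2}
\]
as given, and then feeds it through soft interpolation machinery. By the Lions--Petree characterization (the paper's Lemma~\ref{Lem:Lions-Petree}), this multiplicative estimate is \emph{equivalent} to continuity of the trace operator from $[\xLtwo(\Omega),\xHone(\Omega)]_{1/2,1}$ into $\xLtwo(\front)$. The reiteration theorem then identifies $[\xLtwo(\Omega),\xHn\alpha(\Omega)]_{1/(2\alpha),1}$ and $[\xHn s(\Omega),\xHone(\Omega)]_{\Theta,1}$ with this same space, and applying Lions--Petree in the reverse direction yields both \eqref{eq:L2Gamma_1} and \eqref{eq:L2Gamma_2} at once. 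In particular, the ``sharp borderline'' issue you flag is handled not by a tuned Fourier cutoff but by working in the $q=1$ interpolation scale, which is exactly the scale on which multiplicative inequalities live.

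The trade-off: the paper's route is shorter and makes clear that both estimates are just the standard trace inequality viewed on different interpolation scales, but it relies on the Lions--Petree lemma and reiteration as black boxes. Your approach is self-contained and more explicit about constants, at the cost of having to carry out the localization, straightening, and Fourier splitting by hand. Either is a legitimate proof of the lemma.
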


\begin{proof}
We start with the standard estimate
\[
\|u\|_{\xLtwo(\front)} \le c \|u\|_{\xLtwo(\Omega)}^{\frac12}
\|u\|_{\xHone(\Omega)}^{\frac12}, \qquad \forall u\in \xHone(\Omega),
\]
which allows us to apply Lemma~\ref{Lem:Lions-Petree}. This implies
that the trace operator is a continuous linear mapping from
$[\xLtwo(\Omega),\xHone(\Omega)]_{\frac12,1}$ to $\xLtwo(\front)$.  Then the \bal{Lions-Petree
reiteration theorem~\cite[Thm.~26.3]{bkTartar_2007}} implies that
\begin{align*} [\xLtwo(\Omega),\xHn{\alpha}(\Omega)]_{\frac{1}{2\alpha},1} &=
  [\xLtwo(\Omega),[\xLtwo(\Omega),\xHone(\Omega)]_{\alpha,2}]_{\frac{1}{2\alpha},1}
  = [\xLtwo(\Omega),\xHone(\Omega)]_{\frac12,1} \\
  [\xHn{s}(\Omega),\xHone(\Omega)]_{\Theta,1} &=
  [[\xLtwo(\Omega),\xHone(\Omega)]_{s,2},\xHone(\Omega)]_{\Theta,1} =
  [\xLtwo(\Omega),\xHone(\Omega)]_{\frac12,1}
\end{align*}
The norms being equivalent, we can eventually write:
\begin{align*}
  \|u\|_{\xLtwo(\front)} \le
  c\|u\|_{[\xLtwo(\Omega),\xHone(\Omega)]_{\frac12,1}} \le
  c(\alpha)\|u\|_{[\xLtwo(\Omega),\xHn{\alpha}(\Omega)]_{\frac{1}{2\alpha},1}}
  \le c(\alpha)\|u\|_{\xLtwo(\Omega)}^{1-\frac1{2\alpha}}
  \|u\|_{\xHn{\alpha}(\Omega)}^{\frac1{2\alpha}},\\
  \|u\|_{\xLtwo(\front)} \le
  c\|u\|_{[\xLtwo(\Omega),\xHone(\Omega)]_{\frac12,1}}\le
  c(s)\|u\|_{[\xHn{s}(\Omega),\xHone(\Omega)]_{\Theta,1}} \le
  c(s)\|u\|_{\xHn{s}(\Omega)}^{1-\Theta}\|u\|_{\xHone(\Omega)}^{\Theta}.
\end{align*}
This concludes the proof.
\end{proof}

\begin{lem}[Lions-Petree] \label{Lem:Lions-Petree} 
Let $E_1\subset E_0$ be two Banach spaces, with continuous embedding. Let $L$ be a linear mapping $E_1\rightarrow F$ with $F$ another Banach space. For $s\in(0,1)$, $L$ extends to a linear mapping from $[E_0,E_1]_{s,1}$ to $F$ if and only if there exists $C>0$ such that
\[
\forall u\in E_1,\qquad \|Lu\|_{F} \le C\|u\|_{E_0}^{1-s}\|u\|_{E_1}^s.
\]
\end{lem}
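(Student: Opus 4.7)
The plan is to prove the two implications separately, using the K-functional characterization for the direction that is immediate, and the J-method representation theorem for the converse.

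For the direction where an extension implies the inequality, I would exploit the elementary bounds on the K-functional $K(t,u)=\inf_{u=u_0+u_1}(\|u_0\|_{E_0}+t\|u_1\|_{E_1})$. For $u\in E_1\subset E_0$, the decompositions $(u,0)$ and $(0,u)$ yield $K(t,u)\le\min(\|u\|_{E_0},t\|u\|_{E_1})$. Splitting the defining integral of the $[E_0,E_1]_{s,1}$-norm at the crossover $t_0:=\|u\|_{E_0}/\|u\|_{E_1}$ gives
\[
\|u\|_{[E_0,E_1]_{s,1}}\le\int_0^{t_0}t^{-s}\|u\|_{E_1}\,\tfrac{dt}{1}+\int_{t_0}^\infty t^{-s-1}\|u\|_{E_0}\,dt=\tfrac{1}{s(1-s)}\|u\|_{E_0}^{1-s}\|u\|_{E_1}^{s}.
\]
Combined with the continuity of the extension, this delivers the inequality with $C=M/(s(1-s))$ where $M$ denotes the operator norm.

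For the converse I would rely on the J-method characterization of $[E_0,E_1]_{s,1}$: every $u$ in this space admits a representation $u=\int_0^\infty v(t)\,dt/t$ with $v(t)\in E_0\cap E_1=E_1$ and $\int_0^\infty t^{-s}J(t,v(t))\,dt/t$ equivalent to $\|u\|_{[E_0,E_1]_{s,1}}$, where $J(t,w):=\max(\|w\|_{E_0},t\|w\|_{E_1})$; this is the Lions-Peetre equivalence theorem (see e.g.\ \cite[Chap.~26]{bkTartar_2007}). Setting $a=\|v(t)\|_{E_0}$, $b=\|v(t)\|_{E_1}$, one has both $a\le J(t,v(t))$ and $tb\le J(t,v(t))$, hence the elementary estimate $a^{1-s}b^s\le t^{-s}J(t,v(t))$. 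Applying the hypothesis to each $v(t)\in E_1$ then yields
\[
\|Lv(t)\|_{F}\le C\|v(t)\|_{E_0}^{1-s}\|v(t)\|_{E_1}^{s}\le C\,t^{-s}J(t,v(t)),
\]
and defining $Lu:=\int_0^\infty Lv(t)\,dt/t$ gives $\|Lu\|_F\le C\int_0^\infty t^{-s}J(t,v(t))\,dt/t\lesssim C\|u\|_{[E_0,E_1]_{s,1}}$, which is exactly the desired continuous extension once one checks that the value of $Lu$ is independent of the representation (this follows by linearity of $L$ on $E_1$ combined with the fact that two J-representations of the same $u$ differ by a representation of $0$).

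The main technical obstacle is the J-method representation step: producing an admissible $v(\cdot)$ with controlled $J$-integral and then showing independence of the extended definition from that choice. These are standard but non-trivial facts from real interpolation theory, and citing them (rather than re-proving them) is the natural route for a Banach-space pair $E_1\subset E_0$ with continuous embedding. Everything else reduces to Young-type inequalities and straightforward integral computations.
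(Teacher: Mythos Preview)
The paper does not actually prove this lemma: its entire proof consists of the single sentence ``See Lemma 25.3 in \cite{bkTartar_2007}.'' Your proposal, by contrast, sketches a genuine self-contained argument, and it is essentially correct. The forward implication via the elementary bound $K(t,u)\le\min(\|u\|_{E_0},t\|u\|_{E_1})$ and the split at $t_0=\|u\|_{E_0}/\|u\|_{E_1}$ is exactly the standard computation (the ``$dt/1$'' should read $dt$, but the result $\tfrac{1}{s(1-s)}\|u\|_{E_0}^{1-s}\|u\|_{E_1}^{s}$ is correct). For the converse, your use of the $J$-method representation together with the pointwise bound $a^{1-s}b^{s}\le t^{-s}J(t,v(t))$ is the canonical route, and you correctly flag the one genuine technicality, namely independence of the extension from the chosen $J$-representation. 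An alternative way to sidestep that last point is to observe that $E_1$ is dense in $[E_0,E_1]_{s,1}$, so once your $J$-method estimate shows $\|Lu\|_F\lesssim\|u\|_{[E_0,E_1]_{s,1}}$ for $u\in E_1$, the extension follows by density and is automatically unique; this is slightly cleaner than checking well-definedness directly. In short, your argument is more informative than the paper's bare citation and is a faithful reconstruction of what one finds in Tartar's Chapter~25--26.
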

\begin{proof}
See Lemma 25.3 in \cite{bkTartar_2007}.
\end{proof}

\

\bibliographystyle{abbrvnat}

\end{document}